\newif\ifsiam\siamfalse
	\pgfplotsset{compat=1.16}
		\renewenvironment{qedequation*}{\[}{\]}
		\renewcommand{\includetikz}[2][]{\includegraphics[#1]{Pics/Tikz/#2.pdf}}
		\def\a{a}%
		\def\A{A}%
		\def\B{B}%
		\def\thanks{}%
		\def\funding{}%
		\def\footnotemark{}%
		\theoremstyle{plain}
		\newtheorem{assumption}{Assumption}
			\let\oldproposition\proposition
			\let\oldendproposition\endproposition
			\renewenvironment{proposition}[1][]{%
				\def\@currentlabelname{#1}\ifstrempty{#1}{\oldproposition}{\oldproposition[#1]}%
			}{\oldendproposition}
			\let\oldlemma\lemma
			\let\oldendlemma\endlemma
			\renewenvironment{lemma}[1][]{%
				\def\@currentlabelname{#1}\ifstrempty{#1}{\oldlemma}{\oldlemma[#1]}%
			}{\oldendlemma}
			\let\oldtheorem\theorem
			\let\oldendtheorem\endtheorem
			\renewenvironment{theorem}[1][]{%
				\def\@currentlabelname{#1}\ifstrempty{#1}{\oldtheorem}{\oldtheorem[#1]}%
			}{\oldendtheorem}
			\RegisterTheoremName{\x}{\expandafter\euppercase\x}
		\newenvironment{corollary}{\begin{cor}}{\end{cor}}
		\newenvironment{definition}{\begin{defin}}{\end{defin}}
		\newenvironment{example}{\begin{es}}{\end{es}}
		\newenvironment{lemma}{\begin{lem}}{\end{lem}}
		\newenvironment{proposition}{\begin{prop}}{\end{prop}}
		\newenvironment{remark}{\begin{rem}}{\end{rem}}
		\newenvironment{theorem}{\begin{thm}}{\end{thm}}
		\setlist*[proofitemize,1]{itemindent=\parindent}
	\newtcolorbox{mybox}[1][]{%
		left=0pt,
		right=0pt,
		top=0pt,
		bottom=0pt,
		colback=MidnightBlue!10,
		colframe=MidnightBlue!20,
		width=\dimexpr\textwidth\relax,
		enlarge left by=0mm,
		boxsep=3pt,
		arc=5pt,outer arc=5pt,
		#1
	}
	\let\a\relax
	\DeclareRobustCommand{\a}{a\@ifstar{$_{\bregman*}^{}$}{$_{\bregman}$}}
	\DeclareRobustCommand{\A}{A\@ifstar{$_{\bregman*}^{}$}{$_{\bregman}$}}
	\DeclareRobustCommand{\B}{B\@ifstar{$_{\bregman*}^{}$}{$_{\bregman}$}}
	\newcommand{\kernel}{\phi}
	\newcommand{\bregman}{\@ifstar{\kernel^*}{\kernel}}
	\newcommand{\D}{\@ifstar\@@D\@D}
	\newcommand{\@D}{\@ifnextchar_{\@D@sub}{\operatorname{D}_{\bregman}}}
	\newcommand{\@@D}{\@ifnextchar_{\@@D@sub}{\operatorname{D}_{\bregman*}}}
	\def\@D@sub_#1{\operatorname{D}_{#1}}
	\def\@@D@sub_#1{\operatorname{D}_{#1^*}}
	\newcommand{\env}{\@ifstar\@@env\@env}
	\newcommand{\@env}{\overleftarrow{\operatorname{env}}{}^{\bregman}}
	\newcommand{\@@env}{\overrightarrow{\operatorname{env}}{}^{\bregman}}
	\newcommand{\klee}{\@ifstar\@@klee\@klee}
	\newcommand{\@klee}{\overleftarrow{\operatorname{klee}}{}^{\bregman}}
	\newcommand{\@@klee}{\overrightarrow{\operatorname{klee}}{}^{\bregman}}
	\newcommand{\coupling}{\Phi}
	\let\OLDpartial\partial
	\renewcommand{\partial}{
		\OLDpartial
		\@latex@warning{`\noexpand\partial\space': maybe `\noexpand\subdiff' was meant? Use `\noexpand\OLDpartial' to suppress this warning}
	}
	\newcommand{\subdiff}{\@ifstar\@@partial\@partial}
	\newcommand{\@partial}{\OLDpartial_{\coupling}}
	\newcommand{\@@partial}{\OLDpartial_{\coupling}}
	\renewcommand{\conj}{\@ifstar\@@conj\@conj}
	\newcommand{\@conj}[1]{#1^{\coupling}}
	\newcommand{\@@conj}[1]{#1^{\coupling}}
	\renewcommand{\biconj}{\@ifstar\@@biconj\@biconj}
	\newcommand{\@biconj}[1]{#1^{\coupling\coupling}}
	\newcommand{\@@biconj}[1]{#1^{\coupling\coupling}}
	\newcommand{\infconv}{\mathbin{\square}}
	\newcommand{\supconv}{\mathbin{\Diamond}}
	\DeclareMathOperator*{\epsargmin}{{\varepsilon}\text{-}arg\,min}
	\DeclareMathOperator{\con}{con}
	\DeclareMathOperator{\ran}{rge}
	\renewcommand{\R}{{%
		\mathchoice
		{\mathrm{I\hspace*{-0.22em}R}}
		{\mathrm{I\hspace*{-0.22em}R}}
		{\mathrm{I\hspace*{-0.19em}R}}
		{\mathrm{I\hspace*{-0.17em}R}}
	}}
	\newcommand{\bR}{{\R}}
	\newcommand{\exR}{{\Rinf}}
	\newif\ifshowcomments\showcommentstrue
		\newcommand{\disablecolorlinks}{\def\HyColor@UseColor##1{}}
		\newcommand{\defineInlineComment}[2][]{
			\expandafter\gdef\csname #2\endcsname##1{\ifshowcomments{\color{Blue}\footnotesize{\sc\hl[#1]{#2}: }{\it ##1}}\fi}
		}%
	\newif\ifshowold\showoldtrue
	\newif\ifshownew\shownewtrue
	\newcounter{saveTheorem}\setcounter{saveTheorem}{0}
	\newcounter{saveEquation}\setcounter{saveEquation}{0}
	\colorlet{newcolor}{orange!70!red}
	\colorlet{oldcolor}{black!30}
		\newaliascnt{theorem}{dummythm}
	\newcommand{\old}[1]{{%
		\disablecolorlinks
		\ifshowold
			{%
				\setcounter{saveEquation}{\value{equation}}%
				\setcounter{saveTheorem}{\value{theorem}}%
				\renewcommand{\thetheorem}{\thesection.\arabic{theorem}\textsuperscript{*}}%
				\renewcommand{\theequation}{\thesection.\arabic{equation}\textsuperscript{*}}%
				{\color{oldcolor}{}#1}%
				\setcounter{equation}{\value{saveEquation}}%
				\setcounter{theorem}{\value{saveTheorem}}%
			}%
		\fi
	}}
	\newcommand{\new}[1]{{%
		\disablecolorlinks
		\ifshownew
			{\color{newcolor}{}#1}%
		\fi
	}}
	\DeclareExpandableDocumentCommand{\change}{O{}m}{%
		{%
			\old{#1}%
		}\new{#2}%
	}
	\let\Par\par
\newlist{assumenum}{enumerate}{1} 
\setlist[assumenum]{leftmargin=2.1cm,label=\textup{(A\arabic*)},font=\bfseries}
\Crefname{assumenumi}{Assumption}{Assumptions}
\newcommand{\TheKeywords}{%
	Bregman distance\Sep
	generalized conjugacy\Sep
	duality\Sep
	\(\Phi\)-convexity.%
}
\newcommand{\TheAMSsubj}{
	26B25\Sep 
	49J52\Sep 
	49J53\Sep 
	49M29\Sep
	49N15\Sep
	90C46.
}
\newcommand{\TheFunding}{%
	The work of the first and third author was supported by:
	FWO projects: No. G0A0920N, No. G086318N, No. G086518N;
	Fonds de la Recherche Scientifique -- FNRS, the Fonds Wetenschappelijk Onderzoek--Vlaanderen under EOS Project No. G0F6718N (SeLMA);
	Research Council KU Leuven C1 project No. C14/18/068;
	European Union's Horizon 2020 research and innovation programme under the Marie Sk\l odowska-Curie grant agreement No. 953348;
	and Ford--KU Leuven Research Alliance project No. KUL0075.
	A. Themelis is supported by the Japan Society for the Promotion of Science (JSPS) KAKENHI grant JP21K17710.
}
\author{%
	Emanuel Laude\thanks{%
		KU Leuven,
		Department of Electrical Engineering (ESAT-STADIUS),
		Kasteelpark Arenberg 10, 3001 Leuven, Belgium~
		{\tt%
			\href{mailto:emanuel.laude@esat.kuleuven.be}{\{emanuel.laude,}%
			\href{mailto:panos.patrinos@esat.kuleuven.be}{panos.patrinos\}}%
			\href{mailto:emanuel.laude@esat.kuleuven.be,panos.patrinos@esat.kuleuven.be}{@esat.kuleuven.be}%
		}%
	}%
	\and
	Andreas Themelis\thanks{%
		Kyushu University,
		Faculty of Information Science and Electrical Engineering (ISEE),
		744 Motooka, Nishi-ku, 819-0395 Fukuoka, Japan~
		{\tt
			\href{mailto:andreas.themelis@ees.kyushu-u.ac.jp}{andreas.themelis@ees.kyushu-u.ac.jp}%
		}%
	}%
	\and
	Panagiotis Patrinos\footnotemark[2]%
}%
	\title{%
 		Dualities for non-Euclidean smoothness\texorpdfstring{\\}{ }and strong convexity\texorpdfstring{\\}{ }under the light of generalized conjugacy%
		\thanks{%
			Submitted to the editors \today.%
			\funding{\TheFunding}%
		}%
	}
	\title{%
		Dualities for non-Euclidean smoothness\texorpdfstring{\\}{ }%
		and strong convexity\texorpdfstring{\\}{ }%
		under the light of generalized conjugacy%
		\thanks{\TheFunding}%
	}
	\date{}
\begin{document}
	\maketitle
	\begin{abstract}
		Relative smoothness and strong convexity have recently gained considerable attention in optimization. These notions are generalizations of the classical Euclidean notions of smoothness and strong convexity that are known to be dual to each other.
However, conjugate dualities for non-Euclidean relative smoothness and strong convexity remain an open problem as noted earlier by
Lu, Freund and Nesterov [SIAM J. Optim., 28 (2018), pp. 333--354].
In this paper we address this question by introducing the notions of anisotropic strong convexity and smoothness as the respective dual counterparts. The dualities are developed under the light of generalized conjugacy which leads us embed the anticipated dual notions within the superclasses of certain upper and lower envelopes. In contrast to the Euclidean case these inclusions are proper in general as showcased by means of counterexamples.
	\end{abstract}
	
	\begin{keywords}\TheKeywords \end{keywords}
	\begin{AMS}\TheAMSsubj \end{AMS}

\tableofcontents

	\section{Introduction} \label{sec:intro}
		\subsection{Motivation}
An important duality result in optimization is that a convex function is differentiable with Lipschitz continuous (in the Euclidean sense) gradient mapping if and only if its convex conjugate is strongly convex, i.e., it remains convex after the subtraction of a positive multiple of the squared Euclidean norm, see \cref{fig:euclidean} upper row.
The notions of Euclidean smoothness and strong convexity can be generalized to \emph{relative smoothness} \cite{birnbaum2011distributed,bauschke2017descent,lu2018relatively} and \emph{relative strong convexity} \cite{lu2018relatively,bauschke2019linear} which have recently received considerable attention in the optimization literature \cite{birnbaum2011distributed,bauschke2017descent,lu2018relatively,davis2018stochastic,bauschke2019linear,ahookhosh2021bregman,Mukkamala:2022aa,BSTV2018,dragomir2022optimal}. However, it remains an open problem whether the known conjugate duality between Euclidean Lipschitz smoothness and Euclidean strong convexity can be generalized to the non-Euclidean case as noted earlier in \cite[Section 3.4]{lu2018relatively}.
\begin{figure}[h!]
		\input{TeX/Text/Diagram_Euclidean.tex}
		\caption{%
			Euclidean conjugate dualities for proper, lsc, convex functions $f$ with $L>0$.
			Here,
			$h\infconv \frac{L}{2}\|\cdot\|^2\coloneqq\protect\inf*_{y\in\protect\R^n}h(y)+\protect\frac{L}{2}\|{}\cdot{}-y\|^2$
			and
			$g\supconv \protect\frac{1}{2L}\|\cdot\|^2\coloneqq\sup_{y\in\protect\R^n}\protect\frac{1}{2L}\|{}\cdot{}-y\|^2+g(y)$ 
			denote infimal and supremal convolutions of $h:\bR^n \to \exR$ resp. $g:\bR^n \to \exR$ and a positive multiple of the squared Euclidean norm, respectively.
		}%
		\label{fig:euclidean}%
\end{figure}
\begin{figure}[t!]
	\begin{subfigure}{\linewidth}%
		\input{TeX/Text/Diagram_Bsmooth.tex}%
		\caption{%
			Conjugate duality correspondences between \B-smoothness and \a*-strong convexity as well as Bregman--Moreau envelopes and supremal convolutions for proper, lsc, convex functions \(\func{f}{\R^n}{\exR}\). The implications are equivalences if, in addition, $f$ is essentially strictly convex or, equivalently, $f^*$ is essentially smooth, cf. \cref{thm:duality_aniso_str_cvx}.
		}%
	\end{subfigure}
	\begin{subfigure}{\linewidth}%
		\input{TeX/Text/Diagram_asmooth.tex}
		\caption{%
			Conjugate duality correspondences between \B*-strong convexity and \a-smoothness as well as Bregman--Klee envelopes and infimal convolutions for proper, lsc, convex functions \(\func{f}{\R^n}{\exR}\).%
		}%
	\end{subfigure}
	\caption{%
		Schematics of the interrelations between smoothness, strong convexity, and certain pointwise min/max representations for proper, lsc, convex functions.  
		\B- and \B*- are short for relative to \(\phi\) as in \cref{assum:a1,assum:a2,assum:a3} and to its conjugate \(\phi^*\), respectively, in the classical Bregman sense, see \cref{defin:B-}.
		Similarly, \a- and \a*- refer to the \emph{anisotropic} counterparts defined in \protect\cref{defin:a-}.
		Here, \(\protect\env* g:=\protect\inf_{x \in \bR^n} \protect\D(x, \cdot) + g(x)\) and \(\protect\klee* g\protect:=\protect\sup_{x \in \bR^n} \protect\D(x, \cdot) - g(x)\) are (right) Bregman--Moreau and Bregman--Klee envelopes of \(g:\bR^n \to \exR\), respectively and $\protect\D(x, y)\coloneqq \phi(x) - \phi(y) - \protect\innprod{\protect\nabla\phi(y)}{x-y}$ is the Bregman distance generated by $\phi$.%
	}%
	\label{fig:contributions}%
\end{figure}
The goal of this paper is to close this gap by developing a full\footnote{Here, ``full'' means that we do not exclude nonsmoothness.} conjugate duality for non-Euclidean smoothness and strong convexity.
	Following \cite{penot1990strongly} we examine the duality under the light of $\Phi$-convexity \cite{moreau1970inf} which typically appears in the context of eliminating duality gaps \cite{rockafellar1974augmented,penot1990strongly,bauermeister2021lifting,bui2021zero} in nonconvex and nonsmooth optimization and optimal transport theory, see, e.g., \cite{Vil08}.
	We shall see that the sought duality relation is not found between relative smoothness and relative strong convexity.
	Instead, it involves different notions of relative smoothness and strong convexity which are developed in this paper.
	
	Overall, our contribution is twofold:
	\begin{enumerate}
	\item We furnish the dual counterparts of relative smoothness and relative strong convexity in terms of lower and upper (sub)gradient inequalities that we respectively refer to as \emph{anisotropic strong convexity} and \emph{anisotropic smoothness}.
	The latter is closely related to anisotropic prox-regularity introduced in \cite[Definition 2.13]{laude2021lower}.
		Restricting to convex functions we prove conjugate dualities between relative smoothness and anisotropic strong convexity as well as relative strong convexity and anisotropic smoothness.%
	\item 
		Following \cite{penot1990strongly} the duality correspondences are developed under the light of generalized conjugacy.
		This leads us to study certain superclasses of the anticipated non-Euclidean notions of smoothness and strong convexity examining the properness of these inclusions. In the primal these superclasses correspond to the Bregman--Moreau \cite{bauschke2017regularizing,bauschke2006joint,bauschke2009bregman,kan2012moreau,chen2012moreau,laude2020bregman} and Bregman--Klee envelopes \cite{bauschke2011chebyshev,bauschke2009bregmanklee} and in the dual to infimal and supremal convolutions aka Klee envelopes \cite{cabot2017envelopes,jourani2016nsluc,cabot2018attainment}.
		The known equivalence between the notion of Euclidean weak convexity (aka paraconvexity or hypoconvexity) and the representation in terms of a negative Moreau envelope, see, e.g., \cite{wang2010chebyshev} also motivates the study of non-Euclidean generalizations of the class of weakly convex functions.
	\end{enumerate}
	In the convex case the aforementioned equivalences and implications are synopsized in \cref{fig:contributions}. Further restricting to the Euclidean setting the relations simplify into the ones depicted in \cref{fig:euclidean}.
\subsection{Paper organization}
The remainder of the paper is organized as follows.
\Cref{sec:convexity} offers an overview of basic notions of generalized conjugacy and lists some related results which will be used in the sequel.
\Cref{sec:BregmanAniso} will then revise the classes of relatively smooth and relatively strongly convex functions under the lens of \(\Phi\)-convexity, and introduce their \emph{anisotropic} counterparts which in the subsequent \cref{sec:duality} will be shown to be the sought dual classes.
The shortcomings of \(\Phi\)-convexity in the duality picture will be ultimately showcased in \cref{sec:gap} by means of counterexamples.
\Cref{sec:conclusion} concludes the paper.

\subsection{Preliminaries and notation}
With \(\R\) and \(\Rinf\coloneqq\R\cup\set{\infty,-\infty}\) we respectively denote the real and extended real line.
We denote by $\innprod{\cdot}{\cdot}$ the standard Euclidean inner product on $\bR^n$ and by $\|x\|:=\sqrt{\langle x, x \rangle}$ for any $x\in \bR^n$ the standard Euclidean norm on $\bR^n$.
The closed ball of radius \(r>0\) centered at \(x\in\R^n\) is denoted as \(\cball xr\).
For a set \(S\subseteq\R^n\), \(\interior S\subseteq S\) denotes its interior, \(\con S\supseteq S\) its convex hull, and \(\func{\indicator_S}{\R^n}{\Rinf}\) its indicator function, namely \(\indicator_S(x)=0\) if \(x\in S\) and \(\infty\) otherwise.

The notation \(\ffunc{T}{\R^n}{\R^n}\) indicates a set-valued mapping, whose domain and range are defined as
\(
	\dom T=\set{x\in\R^n}[T(x)\neq\emptyset]
\)
and
\(
	\range T=\bigcup_{x\in\R^n}T(x)
\),
respectively.
We say that \(T\) is locally bounded at \(\bar x\) if there exists a neighborhood \(\mathcal N_{\bar x}\) of \(\bar x\) such that \(\bigcup_{x\in\mathcal N_{\bar x}}T(x)\) is bounded.

The effective domain of an extended real-valued function \(\func{f}{\R^n}{\Rinf}\) is denoted by \(\dom f\coloneqq\set{x\in\R^n}[f(x)<\infty]\), and we say that \(f\) is: proper if $f(x) > -\infty$ for all $x \in \bR^n$ and \(\dom f\neq\emptyset\); lower semicontinuous (lsc) if \(f(\bar x)\leq\liminf_{x\to\bar x}f(x)\) for all \(\bar x\in\R^n\); super-coercive if \(f(x)/\|x\|\to\infty\) as \(\|x\|\to\infty\).
The convex conjugate of \(f\) is denoted as \(f^*\coloneqq\sup_{y \in \bR^n}\set{\innprod{\cdot}{y}-f(y)}\).
We say that \(f\) is strictly continuous at a point \(\bar x\) where $f$ is finite if
\(
	\limsup_{
		\limsubstack{x,x'&\to&\bar x\\x&\neq&x'}
	}{
		\frac{|f(x)-f(x')|}{\|x-x'\|}
	}
{}<{}
	\infty
\).
With \(\mathcal C^k(\R^n)\) we indicate the set of functions from $\bR^n$ to $\bR$ which are \(k\) times continuously differentiable, while with \(\Gamma_0(\R^n)\) the extended real-valued ones which are proper, lsc, and convex.
The identity function \(x\mapsto x\) on \(\R^n\) is denoted by \(\id\).

The set-valued mappings \(\ffunc{\widehat\OLDpartial f,\OLDpartial f}{\R^n}{\R^n}\) are the regular and the limiting subdifferential of \(f\), where \(\bar v\in\widehat\OLDpartial f(\bar x)\) if
\(
	\liminf_{\limsubstack{x&\to&\bar x\\x&\neq&\bar x}}{
		\frac{
			f(x)-f(\bar x)-\innprod{\bar v}{x-\bar x}
		}{
			\|x-\bar x\|
		}
	}
{}\geq{}
	0
\),
while \(\bar v\in\OLDpartial f(\bar x)\) if \(\bar x\in\dom f\) and there exists a sequence \(\graph\widehat\OLDpartial f\ni(x^k,v^k)\to(\bar x,\bar v)\) such that \(f(x^k)\to f(\bar x)\).
If \(g\in\mathcal C^1(\R^n)\), then \(\widehat\OLDpartial(f+g)=\widehat\OLDpartial f+\nabla g\) and \(\OLDpartial(f+g)=\OLDpartial f+\nabla g\) \cite[Exercise 8.8(c)]{RoWe98}.
The inclusion \(0\in\widehat\OLDpartial f(\bar x)\) is necessary for local minimality of \(f\) at \(\bar x\) \cite[Theorem 10.1]{RoWe98}.
We define $\epsargmin f:=\{x : f(x) \leq \inf f + \varepsilon \}$.
We adopt the notions of essential smoothness, essential strict convexity and Legendre type functions from \cite[Section 26]{Roc70}. We say that a function $f \in \Gamma_0(\bR^n)$ is: \emph{essentially smooth} if $\interior(\dom f) \neq \emptyset$, $f$ is differentiable on $\interior(\dom f)$ and $\|\nabla f(x^\nu)\|\to \infty$ whenever $\interior(\dom f) \ni x^\nu \to x \in \boundary \dom f$; \emph{essentially strictly convex} if $f$ is strictly convex on every convex subset of $\dom \partial f$; \emph{Legendre} if $f$ is both essentially smooth and essentially strictly convex.


	\section{Generalized conjugacy}\label{sec:convexity}%
		In this section we recapitulate the notions of $\Phi$-convexity and $\Phi$-conjugacy \cite{moreau1970inf,balder1977extension,dolecki1978convexity,RoWe98,villani2021topics}, which will prove valuable tools in the sequel. 
Throughout this section, $X$ and $Y$ are nonempty sets and $\func{\Phi}{X \times Y}{\bR}$ is a real-valued function called pairing (or coupling).
Although this setting is general enough for our purposes, we emphasize that all the results are true even for extended real-valued couplings, up to the adoption of suitable extended arithmetics to account for indeterminate forms, see, e.g. \cite{moreau1966fonctionnelles,RoWe98}.

\begin{definition}[$\Phi$-convexity]\label{defin:Phicvx}%
	We say that $\func{f}{X}{\exR}$ is (left) $\Phi$-convex if there is $\set*{(y_i, \beta_i) \in Y \times \exR}[i \in \mathcal{I}]$ for some index set $\mathcal{I}$ such that
	\begin{align}
	\label{eq:leftconvex}
		f(x)
	{}={} &
		\sup_{i \in \mathcal{I}} \Phi(x, y_i) - \beta_i
	\quad
		\forall x \in X.
	\shortintertext{%
		When $\mathcal{I}=\emptyset$ we define $f\equiv -\infty$.
		Likewise we say that $\func{g}{Y}{\exR}$ is (right) $\Phi$-convex if there is $\set*{ (x_j, \alpha_j)\in X \times \exR}[ j \in \mathcal{J}]$ for some index set $\mathcal{J}$ such that for all $y \in Y$
	}
	\label{eq:rightconvex}
		g(y)
	{}={} &
		\smash{\sup_{j \in \mathcal{J}}{}} \Phi(x_j, y) - \alpha_j
	\quad
		\forall y \in Y.
	\end{align}
	When $\mathcal{J}=\emptyset$ we define $g\equiv -\infty$.
\end{definition}
Sometimes $\Phi$-convex functions are also referred to as $\Phi$-envelopes. Note that if $X=Y=\bR^n$ and $\Phi=\innprod{\cdot}{\cdot}$ is the Euclidean inner product, one recovers the class of proper convex lsc functions.
This condition, however, differs from the classical secant line definition of convexity which also includes functions which are not lsc.
More generally, a typical choice for the coupling is the negative squared Euclidean norm $\Phi(x,y)=-\frac{1}{2\lambda}\|x-y\|^2$ for $\lambda >0$ which leads to the proximal hull \cite[Example 1.44]{RoWe98}. Another interesting choice is $\Phi(x, y):=\langle v, x\rangle -\frac{r}{2}\|x\|^2$ for $y=(v,r)$. Under this coupling all proper lsc and prox-bounded functions are $\Phi$-convex \cite[Example 11.66]{RoWe98}. Further examples can be found in \cite{balder1977extension,dolecki1978convexity,villani2021topics}.

\begin{definition}[$\Phi$-conjugate functions] \label{def:phi_convex}
	The (left) $\Phi$-conjugate of $\func{f}{X}{\exR}$ on $Y$ is $\func{\conj f}{Y}{\exR}$, defined by
	\begin{align}
	\label{eq:conj}
		\conj f(y)
	{}\coloneqq{} &
		\sup_{x \in X} \Phi(x, y) - f(x),
	\shortintertext{and the (left) $\Phi$-biconjugate of $f$ back on $X$ is $\func{\biconj f}{X}{\exR}$, defined by}
		\biconj f(x)
	{}\coloneqq{} &
		\sup_{y \in Y} \Phi(x, y) - \conj f(y).
	\shortintertext{Likewise, the (right) $\Phi$-conjugate of $\func{g}{Y}{\exR}$ on $X$ is defined by}
	\label{eq:conj*}
		\conj*g(x)
	{}\coloneqq{} &
		\sup_{y \in Y} \Phi(x, y) - g(y),
	\shortintertext{and the (right) $\Phi$-biconjugate of $g$ back on $Y$ is given by}
		\biconj* g(y)
	{}\coloneqq{} &
		\sup_{x \in X} \Phi(x, y) - \conj*g(x).
	\end{align}
\end{definition}
Once again specializing the definition to $X=Y=\bR^n$ and $\Phi=\innprod{\cdot}{\cdot}$ one recovers the familiar convex conjugate and biconjugate operations. If $\Phi(x,y)=-\frac{1}{2\lambda}\|x-y\|^2$ for $\lambda >0$ is the negative squared Euclidean norm $\Phi$-conjugacy yields the proximal transform \cite[Example 11.64]{RoWe98}. Another choice for $\Phi$ is $-\|x-y\|$ which leads to the negative Pasch--Hausdorff envelopes; see, e.g., \cite[Example 9.11]{RoWe98}. The corresponding class of $\Phi$-conjugates is the class of globally Lipschitz functions and plays an important role in the Kantorovich--Rubinstein duality in optimal transport theory; see, e.g., \cite[Remark 6.5]{Vil08}.
Convex subgradients can similarly be captured as a special case of $\Phi$-subgradients, defined next; see, e.g., \cite[Remark 3.3]{balder1977extension}.

\begin{definition}[$\Phi$-subgradients and $\Phi$-subdifferential]
	We say that $\bar y \in Y$ is a (right) $\Phi$-subgradient of $\func{f}{X}{\exR}$ at $\bar x\in\dom f$, denoted by $\bar{y}\in\subdiff f(\bar x)$, if the following generalized subgradient inequality holds:
	\begin{equation}\label{eq:subdiff}
		f(x) \geq f(\bar x) + \Phi(x, \bar y) - \Phi(\bar x, \bar y) \quad \forall x \in X,
	\end{equation}
	or, equivalently, if \(\bar x\in\argmax_{x\in X}\set{\Phi(x,\bar y)-f(x)}\). We call the set $\subdiff f(\bar x)$ of all subgradients $\bar y$ of $f$ at $\bar x$ the (right) $\Phi$-sub\-dif\-fe\-ren\-tial of $f$ at $\bar x$.

	For $\func{g}{Y}{\exR}$ the definition of a (left) $\Phi$-subgradient and (left) $\Phi$-sub\-dif\-fe\-ren\-tial is parallel.
\end{definition}

The remainder of this section collects a list of useful properties of generalized convexity which will be invoked later on in the manuscript.
Although quite standard, see e.g., \cite{balder1977extension,dolecki1978convexity,RoWe98,villani2021topics}, these results are instrumental in our analysis, and for completeness we will thus detail the simple proofs.
We confine the discussion to ``left'' functions \(\func fX\Rinf\); the results for ``right'' functions \(\func gY\Rinf\) are parallel.

The following result can be regarded as a generalization of the \emph{Fenchel--Moreau theorem}, see, e.g., \cite[Theorem 2.78]{mordukhovich2013easy}.
\begin{lemma}
	For any $\func{f}{X}{\exR}$, the following properties hold:
	\begin{enumerate}
	\item\label{thm:conj:conv}%
		$\func{\conj f}{Y}{\Rinf}$ is right \(\Phi\)-convex.
	\item\label{thm:bconj:conv}%
		$\func{\biconj f}{X}{\exR}$ is the 
		largest left $\Phi$-convex function
		majorized by $f$; namely, for
		$$
		\mathcal{A}_f:=\{(y,\beta) \in Y \times \exR : \Phi(x,y) -\beta \leq f(x)~~\forall x \in X\},
		$$
		we have
		\[
			f(\bar x)
		{}\geq{}
			\biconj f(\bar x)
		{}={}
			\sup_{(y,\beta) \in \mathcal{A}_f} \{\Phi(\bar x,y) -\beta\}
		\quad
			\forall \bar x\in X,
		\]
		where the inequality holds with equality if, in addition, $f$ is $\Phi$-convex.
	\item\label{thm:bbconj}%
		$\conj{(\biconj f)}=\conj f$.
	\item\label{thm:FYineq}%
		\emph{(\(\Phi\)-Fenchel--Young inequality)}
		$f(x) + \conj f(y) \geq \Phi(x, y)$ for every $(x,y)\in X\times Y$.
	\end{enumerate}
\end{lemma}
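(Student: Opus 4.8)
The plan is to derive all four items straight from the definitions in \cref{def:phi_convex}, in the order \cref{thm:FYineq}, \cref{thm:conj:conv}, \cref{thm:bconj:conv}, \cref{thm:bbconj}, so that each part may use the ones before it. I would dispose of the $\Phi$-Fenchel--Young inequality \cref{thm:FYineq} first: for any fixed $x\in X$ the supremum over $X$ in \cref{eq:conj} dominates the single term $\Phi(x,y)-f(x)$, and rearranging gives $f(x)+\conj f(y)\ge\Phi(x,y)$. For \cref{thm:conj:conv} I would merely exhibit the representation required by \cref{eq:rightconvex}: with index set $\mathcal J\coloneqq\dom f$ and $\alpha_x\coloneqq f(x)$, one reads off from \cref{eq:conj} that $\conj f(y)=\sup_{x\in\mathcal J}\set{\Phi(x,y)-\alpha_x}$ (indices outside $\dom f$ contribute only $-\infty$), so $\conj f$ is right $\Phi$-convex, with the edge case $\dom f=\emptyset$ reproducing the convention $\conj f\equiv-\infty$.

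The heart of the lemma is \cref{thm:bconj:conv}. First I would record the identification
\[
	\mathcal A_f
	{}={}
	\set*{(y,\beta)\in Y\times\exR}[\beta\ge\conj f(y)],
\]
which is immediate since $(y,\beta)\in\mathcal A_f$ says exactly that $\Phi(x,y)-f(x)\le\beta$ for all $x\in X$, that is, $\conj f(y)\le\beta$. Substituting this into the definition of $\biconj f$ and splitting the supremum,
\[
	\sup_{(y,\beta)\in\mathcal A_f}\set{\Phi(\bar x,y)-\beta}
	{}={}
	\sup_{y\in Y}\ \sup_{\beta\ge\conj f(y)}\set{\Phi(\bar x,y)-\beta}
	{}={}
	\sup_{y\in Y}\set{\Phi(\bar x,y)-\conj f(y)}
	{}={}
	\biconj f(\bar x),
\]
which establishes the displayed formula and, reading the leftmost expression as an instance of \cref{eq:leftconvex} indexed by $\mathcal A_f$, also shows that $\biconj f$ is left $\Phi$-convex. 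The bound $f\ge\biconj f$ I would get by rewriting \cref{thm:FYineq} as $f(\bar x)\ge\Phi(\bar x,y)-\conj f(y)$ and taking the supremum over $y\in Y$. For maximality I would take an arbitrary left $\Phi$-convex $h\le f$, written $h=\sup_{i\in\mathcal I}\set{\Phi(\cdot,y_i)-\beta_i}$; then $\Phi(\cdot,y_i)-\beta_i\le h\le f$ forces $(y_i,\beta_i)\in\mathcal A_f$ for every $i$, whence $h(\bar x)\le\sup_{(y,\beta)\in\mathcal A_f}\set{\Phi(\bar x,y)-\beta}=\biconj f(\bar x)$. Finally, if $f$ is itself $\Phi$-convex it is a left $\Phi$-convex minorant of itself, so maximality yields $f\le\biconj f$, hence $f=\biconj f$.

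Item \cref{thm:bbconj} I would then split into two one-line inequalities. From $\biconj f\le f$ (just proved) we get $\Phi(x,y)-\biconj f(x)\ge\Phi(x,y)-f(x)$ for all $x$, and taking the supremum over $x$ gives $\conj{(\biconj f)}\ge\conj f$; conversely the defining inequality $\biconj f(x)\ge\Phi(x,y)-\conj f(y)$, valid for all $x,y$, rearranges to $\Phi(x,y)-\biconj f(x)\le\conj f(y)$, and supremizing over $x$ gives $\conj{(\biconj f)}\le\conj f$.

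I do not anticipate a genuine obstacle: this is the classical proof of the Fenchel--Moreau theorem transcribed to an abstract pairing. The only point demanding a little care is the extended-real and empty-index-set bookkeeping --- that the identity $\mathcal A_f=\set*{(y,\beta)}[\beta\ge\conj f(y)]$, the interchange of the two suprema above, and the representations \cref{eq:leftconvex,eq:rightconvex} remain meaningful when $f\equiv+\infty$, when $f$ attains $-\infty$, or when $\conj f$ takes the values $\pm\infty$. In each of these cases the formulas degenerate consistently, so they need only be flagged rather than argued separately.
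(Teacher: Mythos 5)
Your proof is correct and follows essentially the same route as the paper: the key step for \cref{thm:bconj:conv} is the same identification of $\mathcal{A}_f$ with the epigraph of $\conj f$ followed by splitting the supremum over $\beta$. The only cosmetic differences are that you prove \cref{thm:bbconj} by two direct inequalities where the paper instead writes $\conj{(\biconj f)}=\biconj{(\conj f)}$ and invokes the right-$\Phi$-convexity of $\conj f$, and that you spell out the maximality of $\biconj f$ among left $\Phi$-convex minorants, which the paper leaves implicit in the displayed formula.
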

\begin{proof}~
	\begin{itemize}
	\item``\ref{thm:conj:conv}''
		Trivial (compare \eqref{eq:conj} and \eqref{eq:rightconvex}).
	\item``\ref{thm:bconj:conv}''
		\(\biconj f\) is left \(\Phi\)-convex by assertion \ref{thm:conj:conv}, being it the right \(\Phi\)-conjugate of \(\conj f\).
		Moreover, for \(\bar x\in X\) one has
		\begin{align*}
			\biconj f(\bar x)
		{}\defeq{} &
			\sup_{y \in Y}\set{
				\Phi(\bar x,y)-\conj f(y)
			}
		{}={}
			\sup_{(y,\beta)\in Y \times \exR}\set{
				\Phi(\bar x,y)-\beta
			}[
				\conj f(y)
			{}\leq{}
				\beta
			]
		\\
		{}={} &
			\sup_{(y,\beta) \in \mathcal{A}_f} \{\Phi(\bar x,y)-\beta\},
		\end{align*}
		where the last identity holds since $\conj f(y)\defeq \sup_{x \in X} \Phi(x, y) - f(x) \leq \beta$ if and only if $(y, \beta) \in \mathcal{A}_f$. In particular this implies the inequality $f(\bar x) \geq \biconj f(\bar x)$. If, in addition, $f$ is $\Phi$-convex there is $\{ (y_i, \beta_i) \in Y \times \exR : i \in \mathcal{I} \}$ for some index set $\mathcal{I}$ such that $f = \sup_{i \in \mathcal{I}} \Phi(\cdot, y_i) - \beta_i$. Therefore for any $i \in \mathcal{I}$ we have $f(x)\geq \Phi(x, y_i) - \beta_i$ for all $x \in X$ and thus $\mathcal{I}\subseteq \mathcal{A}_f$. This implies that $f(\bar x)\leq \sup_{(y,\beta) \in \mathcal{A}_f} \{\Phi(\bar x,y)-\beta\} = \biconj f(\bar x)$ and thus $f(\bar x) =\biconj f(\bar x)$ as claimed.
	\item``\ref{thm:bbconj}''
		By definition $\biconj f = \conj{(\conj f)}$. Thus we have \(\biconj{(\conj f)} =\conj{(\conj{(\conj f)})}=\conj{(\biconj f)}\) and the claim follows from assertions \ref{thm:conj:conv} and \ref{thm:bconj:conv}, since \(\conj f\) is right \(\Phi\)-convex and as such it coincides with its biconjugate.
	\item``\ref{thm:FYineq}''
		Follows from the definition of left \(\Phi\)-conjugate, cf. \eqref{eq:conj}.
	\qedhere
	\end{itemize}
\end{proof}

\begin{lemma}\label{thm:Phicvx}%
	For any $\func{f}{X}{\exR}$, the following are equivalent:
	\begin{enumerateq}
	\item\label{thm:phiconvex:}%
		$f$ is left $\Phi$-convex;
	\item\label{thm:phiconvex:f=biconj}%
		$\biconj f=f$;
	\item\label{thm:phiconvex:conj*}%
		$f=\conj*g$ for some $\func{g}{Y}{\exR}$.
	\end{enumerateq}
\end{lemma}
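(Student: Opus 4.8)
The plan is to close the cyclic chain of implications \ref{thm:phiconvex:}~$\Rightarrow$~\ref{thm:phiconvex:f=biconj}~$\Rightarrow$~\ref{thm:phiconvex:conj*}~$\Rightarrow$~\ref{thm:phiconvex:}, each link following immediately from the definitions and from the preceding lemma. For \ref{thm:phiconvex:}~$\Rightarrow$~\ref{thm:phiconvex:f=biconj} there is nothing to do beyond invoking the equality case already recorded in assertion~\ref{thm:bconj:conv} of that lemma: one always has $f\geq\biconj f$, with equality precisely when $f$ is left $\Phi$-convex.

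For \ref{thm:phiconvex:f=biconj}~$\Rightarrow$~\ref{thm:phiconvex:conj*} one simply exhibits a witness: take $g\coloneqq\conj f$. Since $\biconj f=\conj*{(\conj f)}$ by \cref{def:phi_convex}, the standing hypothesis $\biconj f=f$ gives $\conj*g=f$, so $f$ is a right $\Phi$-conjugate. For \ref{thm:phiconvex:conj*}~$\Rightarrow$~\ref{thm:phiconvex:}, suppose $f=\conj*g$; unwinding \eqref{eq:conj*} we read off $f(x)=\sup_{y\in Y}\set{\Phi(x,y)-g(y)}$ for every $x\in X$, which is exactly a representation of the form \eqref{eq:leftconvex} with index set $\mathcal I=Y$ and data $(y_i,\beta_i)=(y,g(y))$; hence $f$ is left $\Phi$-convex. (This is just the analogue of assertion~\ref{thm:conj:conv} for right functions $g$.)

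I expect no genuine obstacle: the statement is the transcription to the $\Phi$-convex setting of the Fenchel--Moreau biconjugation theorem, and the three conditions are, by construction, three equivalent ways of saying that $f$ is a pointwise supremum of $\Phi$-affine functions. The only bookkeeping worth flagging concerns degenerate values of $g$ in the last implication, and it is absorbed by the conventions of \cref{defin:Phicvx} together with the extended arithmetics for indeterminate forms recalled at the outset of the section: if $g\equiv+\infty$ then $f\equiv-\infty$, matching $\mathcal I=\emptyset$, while if $g$ attains $-\infty$ somewhere then $f\equiv+\infty$, matching a singleton family with $\beta_i=-\infty$.
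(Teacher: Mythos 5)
Your proposal is correct and follows exactly the paper's own cyclic argument: \ref{thm:phiconvex:}~$\Rightarrow$~\ref{thm:phiconvex:f=biconj} via the equality case of assertion~\ref{thm:bconj:conv}, \ref{thm:phiconvex:f=biconj}~$\Rightarrow$~\ref{thm:phiconvex:conj*} with the witness $g=\conj f$, and \ref{thm:phiconvex:conj*}~$\Rightarrow$~\ref{thm:phiconvex:} because right $\Phi$-conjugates are left $\Phi$-convex by assertion~\ref{thm:conj:conv}. The extra remarks on degenerate values of $g$ are harmless bookkeeping that the paper leaves implicit.
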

\begin{proof}~
	\begin{itemize}
	\item``\ref{thm:phiconvex:} $\Rightarrow$ \ref{thm:phiconvex:f=biconj}''
		Follows from \cref{thm:bconj:conv}.
	\item``\ref{thm:phiconvex:f=biconj} $\Rightarrow$ \ref{thm:phiconvex:conj*}''
		Holds with $g=\conj f$.
	\item``\ref{thm:phiconvex:conj*} $\Rightarrow$ \ref{thm:phiconvex:}''
		Trivial, since $\conj*g$ is left $\Phi$-convex by \cref{thm:conj:conv}.
	\qedhere
	\end{itemize}
\end{proof}

\begin{lemma}[Fenchel \(\Phi\)-duality]\label{thm:phi_subgrad_existence}%
	Let $\func{f}{X}{\exR}$ be proper.
	For any $\bar x\in X$ and $\bar y\in Y$, the following statements are equivalent:
	\begin{enumerateq}
	\item\label{thm:FY:subdiff}%
		$\bar y \in \subdiff f(\bar x)$;
	\item\label{thm:FY:=}%
		$f(\bar x) + \conj f(\bar y) = \Phi(\bar x,\bar y) \in\R$;
	\item\label{thm:FY:argmin}%
		$\bar x\in\argmin_{x\in X}\set{f(x)-\Phi(x, \bar y)}$;
	\item\label{thm:FY:subdiff*}%
		$\bar x\in\subdiff*\conj f(\bar y)\cap\dom\subdiff f$.
	\end{enumerateq}
	Any one of these equivalent conditions implies \(f(\bar x)=\biconj f(\bar x)\in\R\)\ and \(\bar y\in\subdiff\biconj f(\bar x)\).
	Moreover, when \(f\) is left \(\Phi\)-convex the intersection with \(\dom\subdiff f\) is superfluous in the last condition.
\end{lemma}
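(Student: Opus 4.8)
The plan is to close the loop of equivalences by unwinding the definitions, using the \(\Phi\)-Fenchel--Young inequality (\cref{thm:FYineq}) for the finiteness bookkeeping and the biconjugate identities (\cref{thm:bconj:conv,thm:bbconj}) to mediate between \(f\) and \(\biconj f\). I would first dispatch the ``core block'' \ref{thm:FY:subdiff}\(\Leftrightarrow\)\ref{thm:FY:argmin}\(\Leftrightarrow\)\ref{thm:FY:=}. The equivalence \ref{thm:FY:subdiff}\(\Leftrightarrow\)\ref{thm:FY:argmin} is immediate from the reformulation of a \(\Phi\)-subgradient recorded right after \eqref{eq:subdiff}, noting that if \(\bar x\) attains the minimum in \ref{thm:FY:argmin} then \(f\) being proper and \(\Phi\) real-valued force \(f(\bar x)\in\R\), hence \(\bar x\in\dom f\) as \ref{thm:FY:subdiff} demands. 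Next, \ref{thm:FY:argmin} says \(f(\bar x)-\Phi(\bar x,\bar y)=\inf_{x\in X}\set{f(x)-\Phi(x,\bar y)}=-\conj f(\bar y)\); rearranging yields \(f(\bar x)+\conj f(\bar y)=\Phi(\bar x,\bar y)\), which is real because \(f(\bar x)\in\R\) and \(\conj f(\bar y)>-\infty\) by properness — this is \ref{thm:FY:=}. Conversely \ref{thm:FY:=} together with \cref{thm:FYineq} says exactly that the real number \(-\conj f(\bar y)=f(\bar x)-\Phi(\bar x,\bar y)\) is a lower bound for \(x\mapsto f(x)-\Phi(x,\bar y)\) attained at \(\bar x\), i.e.\ \ref{thm:FY:argmin}.

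I would then bring in \ref{thm:FY:subdiff*}. For \ref{thm:FY:=}\(\Rightarrow\)\ref{thm:FY:subdiff*}: the core block already gives \(\bar x\in\dom\subdiff f\), while from \(\conj f(y)\ge\Phi(\bar x,y)-f(\bar x)\) for all \(y\) combined with \(f(\bar x)=\Phi(\bar x,\bar y)-\conj f(\bar y)\) one gets \(\conj f(y)\ge\conj f(\bar y)+\Phi(\bar x,y)-\Phi(\bar x,\bar y)\) for all \(y\), that is \(\bar x\in\subdiff*\conj f(\bar y)\). For \ref{thm:FY:subdiff*}\(\Rightarrow\)\ref{thm:FY:=}: reading \(\bar x\in\subdiff*\conj f(\bar y)\) through the ``right'' analogue of the core block (with \(\conj*{(\conj f)}=\biconj f\)) yields \(\biconj f(\bar x)+\conj f(\bar y)=\Phi(\bar x,\bar y)\in\R\); since \(\biconj f\le f\) by \cref{thm:bconj:conv}, it only remains to show \(f(\bar x)\le\biconj f(\bar x)\), and this is exactly where \(\bar x\in\dom\subdiff f\) is used: picking \(\bar y'\in\subdiff f(\bar x)\) and applying \ref{thm:FY:subdiff}\(\Rightarrow\)\ref{thm:FY:=} at \(\bar y'\) gives \(f(\bar x)=\Phi(\bar x,\bar y')-\conj f(\bar y')\le\sup_{y\in Y}\set{\Phi(\bar x,y)-\conj f(y)}=\biconj f(\bar x)\). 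Hence \(f(\bar x)=\biconj f(\bar x)\) and \ref{thm:FY:=} follows.

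For the concluding assertions: once all four conditions are known to be equivalent, \ref{thm:FY:=} holds and gives \(f(\bar x)\in\R\), while the computation just above gives \(f(\bar x)=\biconj f(\bar x)\). Using \(\conj{(\biconj f)}=\conj f\) (\cref{thm:bbconj}), condition \ref{thm:FY:=} for \(f\) rewrites as \(\biconj f(\bar x)+\conj{(\biconj f)}(\bar y)=\Phi(\bar x,\bar y)\in\R\), i.e.\ condition \ref{thm:FY:=} for the (proper, since \(\biconj f\le f\), \(\biconj f(\bar x)\in\R\), and \(\biconj f>-\infty\)) function \(\biconj f\); applying \ref{thm:FY:=}\(\Rightarrow\)\ref{thm:FY:subdiff} to \(\biconj f\) yields \(\bar y\in\subdiff\biconj f(\bar x)\). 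Finally, if \(f\) is left \(\Phi\)-convex then \(f=\biconj f\) by \cref{thm:Phicvx}, so in the argument for \ref{thm:FY:subdiff*}\(\Rightarrow\)\ref{thm:FY:=} the inequality \(f(\bar x)\le\biconj f(\bar x)\) is automatic and the detour through \(\dom\subdiff f\) becomes unnecessary; thus the intersection with \(\dom\subdiff f\) in \ref{thm:FY:subdiff*} may be dropped.

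I expect the only genuinely delicate point to be the direction \ref{thm:FY:subdiff*}\(\Rightarrow\)\ref{thm:FY:=}: a \(\Phi\)-subgradient of \(\conj f\) at \(\bar y\) pins down \(\biconj f(\bar x)\) but not \(f(\bar x)\), so the potential gap \(f(\bar x)-\biconj f(\bar x)\) has to be closed by auxiliary information — here the membership \(\bar x\in\dom\subdiff f\), or, in the \(\Phi\)-convex case, the fact that the gap vanishes identically — which is precisely the content of the last sentence of the statement.
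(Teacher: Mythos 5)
Your proposal is correct and follows essentially the same route as the paper's proof: the equivalence of \ref{thm:FY:subdiff}--\ref{thm:FY:argmin} directly from the definitions of $\conj f$ and $\subdiff f$, the passage to \ref{thm:FY:subdiff*} via the chain $f(\bar x)\geq\biconj f(\bar x)\geq\Phi(\bar x,\bar y)-\conj f(\bar y)=f(\bar x)$, and, for \ref{thm:FY:subdiff*}$\Rightarrow$\ref{thm:FY:=}, the use of an auxiliary subgradient $\bar y'\in\subdiff f(\bar x)$ (the paper's $\eta$) to close the gap $f(\bar x)\leq\biconj f(\bar x)$ — which you correctly identify as the one place where $\dom\subdiff f$, or $\Phi$-convexity, is genuinely needed. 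The only cosmetic difference is that you obtain $\bar y\in\subdiff\biconj f(\bar x)$ by rewriting condition \ref{thm:FY:=} for $\biconj f$ via $\conj{(\biconj f)}=\conj f$, whereas the paper chains the implication \ref{thm:FY:subdiff}$\Rightarrow$\ref{thm:FY:subdiff*} twice; both are equivalent in substance.
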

\begin{proof}
	The equivalence of the first three assertions follows from the definitions of \(\Phi\)-con\-ju\-gate and \(\Phi\)-subdifferential, cf. \eqref{eq:conj} and \eqref{eq:subdiff}, and in particular each implies the inclusion \(\bar x\in\dom\subdiff f\). Suppose now that \ref{thm:FY:=} holds. Then we have%
		\[
			f(\bar x) \geq f^{\Phi\Phi}(\bar x) \defeq \sup_{y \in Y} \Phi(\bar x, y) - \conj f(y) \geq \Phi(\bar x, \bar y) - f^\Phi(\bar y) = f(\bar x),
		\]
		where the first inequality holds due to \cref{thm:bconj:conv} and the last equality is assertion \ref{thm:FY:=}.
		All inequalities thus hold as equalities showing that $f(\bar x) = \biconj f(\bar x)$ and $\bar y \in \argmax \Phi(\bar x, \cdot) - \conj f$ which means by definition
		\(\bar x\in\subdiff*\conj f(\bar y)\).
		This concludes the proof of the implications
	``\ref{thm:FY:subdiff}
	\(\Leftrightarrow\)
	\ref{thm:FY:=}
	\(\Leftrightarrow\)
	\ref{thm:FY:argmin}
	\(\Rightarrow\)
	\ref{thm:FY:subdiff*}''.

	Suppose now that assertion \ref{thm:FY:subdiff*} holds.
		Since \(\bar x\in\dom\subdiff f\), there exists \(\eta\in\subdiff f(\bar x)\), which by the equivalence of the first two assertions implies
		\(
			f(\bar x)
		{}={}
			\Phi(\bar x,\eta)-\conj f(\eta)
		{}\in{}
			\R
		\).
		Similarly, the inclusion \(\bar x\in\subdiff*\conj f(\bar y)\) implies, through the same equivalence applied to \(\func{\conj f}{Y}{\Rinf}\), that
		\(
			\conj f(\bar y)+\biconj f(\bar x)
		{}={}
			\Phi(\bar x,\bar y)
		\). Then,
	\[
		f(\bar x)
	{}={}
		\Phi(\bar x,\eta)-\conj f(\eta)
	{}\leq{}
		\sup\set{\Phi(\bar x,{}\cdot{})-\conj f}
	{}\defeq{}
		\biconj f(\bar x)
	{}\overrel*[\leq]{\ref{thm:bconj:conv}}{}
		f(\bar x),
	\]
	Therefore, \(\Phi(\bar x,\bar y)-\conj f(\bar y){}=\biconj f(\bar x)=f(\bar x){}\in\R\), which is assertion \ref{thm:FY:=}.
	
	To conclude, observe that
		\(
			\bar y \in \subdiff f(\bar x)
		{}\Rightarrow{}
			\bar x \in \subdiff*\conj f(\bar y)
		{}\Rightarrow{}
			\bar y \in \subdiff\biconj f(\bar x)
		\),
		owing to the shown equivalence of the first and last assertions, applied first to \(\func fX\Rinf\) and then to \(\func{\conj f}Y\Rinf\).
	If \(f\) is left \(\Phi\)-convex, then \(f=\biconj f\) by \cref{thm:Phicvx}, and the above chain of implications then reduces to
		\(
			\bar y \in \subdiff f(\bar x)
		{}\Leftrightarrow{}
			\bar x \in \subdiff*\conj f(\bar y)
		\),
	without domain inclusion.
\end{proof}

	\section{Bregman and anisotropic convexity and smoothness}\label{sec:BregmanAniso}%
		\subsection{Standing assumptions}
The notions considered in the remainder of this manuscript involve a \emph{reference function} $\phi$ which complies with the following standing requirements, that will be henceforth assumed without further mention:
\begin{mybox}
\begin{assumenum}
\item \label{assum:a1} $\phi\in\Gamma_0(\bR^m)$ is Legendre;
\item \label{assum:a2} $\dom \phi=\bR^n$;
\item \label{assum:a3} $\phi$ is super-coercive.
\end{assumenum}
\end{mybox}
\Cref{assum:a2,assum:a3} are made for simplicity so as to avoid treatment of complicating boundary cases and extended arithmetics.
In view of \cite[Proposition 2.16]{bauschke1997legendre}, the convex conjugate $\phi^*$ of $\phi$ also complies with the same requirements, and the gradients $\func{\nabla\phi,\nabla\phi^*}{\bR^n}{\bR^n}$ are mutually inverse bijections: $(\nabla \phi)^{-1}=\nabla \phi^*$.

\subsection{Bregman convexity and smoothness}
In this section we revisit the existing notions of relative strong/weak convexity \cite{chen2012moreau,lu2018relatively,bauschke2019linear} and relative smoothness \cite{birnbaum2011distributed,bauschke2017descent,lu2018relatively} and collect some basic properties. In addition, we provide alternative characterizations in terms of $\Phi$-convexity. To begin with we introduce the Bregman distance \cite{Bre67} $\D$ generated by the reference function $\phi$ which thanks to the full domain property of $\phi$ is defined as
	\[
		\D(x,y)\coloneqq \phi(x) - \phi(y) - \innprod{\nabla\phi(y)}{x-y}
	\]
	for any $x,y\in\bR^n$.
	Next, we recall the definitions of relative weak convexity \cite{chen2012moreau}, relative strong convexity \cite{lu2018relatively,bauschke2019linear} and a two-sided version \cite{BSTV2018} of relative smoothness \cite{birnbaum2011distributed,bauschke2017descent,lu2018relatively}.
	Since our dual notions involve a reference function as well, we will refine the existing terminology and henceforth refer to relative weak/strong convexity and smoothness as Bregman weak/strong convexity and smoothness (\B-weak/\B-strong convexity and \B-smoothness):
\begin{mybox}
	\begin{definition}[Bregman weak/strong convexity and smoothness]\label{defin:B-}%
		A function $\func{f}{\bR^n}{\exR}$ is said to be
		\begin{enumerate}
		\item\label{def:B-weak}%
			\emph{\B-weakly convex} if \(f+\phi\) is convex;
		\item\label{def:B-strong}%
			\emph{\B-strongly convex} if \(f-\phi\) is convex;
		\item\label{def:B-smooth}%
			\emph{\B-smooth} if both \(f\) and \(-f\) are proper and \B-weakly convex.
		\end{enumerate}
	\end{definition}
\end{mybox}
A source of practical examples for \B-weakly convex functions is the class of compositions $c \circ F : \bR^n \to \bR$ for a convex and globally Lipschitz function $c:\bR^m \to \bR$ and a nonlinear mapping $F:\bR^n \to \bR^m$ whose component functions $F_i:\bR^n \to \bR$ are \B-smooth, see, e.g., \cite{laude2020bregman,Mukkamala:2022aa}.

Next we provide basic equivalent characterizations of \B-weak/strong convexity and \B-smooth\-ness in terms of lower and upper (sub)gradient inequalities. The characterization of \B-smoothness is a specialization of the two-sided \emph{full extended descent lemma} \cite[Lemma 2.1]{BSTV2018} to $\phi$ with full domain; see \cite{bauschke2017descent,lu2018relatively} for the one-sided version. For a joint treatment we introduce a flag $r \in \{\pm 1\}$ such that convexity of $f -r\phi$ means either \B-weak ($r=-1$) or \B-strong ($r=+1$) convexity.

\begin{proposition}\label{thm:B-weak}%
	Let \(\func{f}{\R^n}{\Rinf}\) be proper and lsc, and let \(r=-1\) (resp. \(r=+1\)).
	The following are equivalent:
	\begin{enumerateq}
	\item \label{thm:B-:i} \(f\) is \B-weakly convex (resp. \B-strongly convex), that is, \(f-r\phi\) is convex;
	\item \label{thm:B-:ineq}%
		for every \((\bar x,\bar v)\in\graph\OLDpartial f\), it holds that
		\begin{equation}\label{eq:B-ineq}
			f(x){}\geq{} f(\bar x) {}+{} \innprod{\bar v}{x-\bar x} {}+{}r\D(x,\bar x) \quad \forall x\in\R^n;
		\end{equation}
	\item \label{thm:B-:iii}
		$\innprod{x-x'}{v-v'}\geq r\innprod{x-x'}{\nabla\phi(x) -\nabla\phi(x')}$ holds for all $(x, v), (x',v') \in\graph\OLDpartial f$.
	\end{enumerateq}
	Moreover, \(f\) is \B-smooth if and only if it is continuously differentiable and satisfies%
	\begin{equation}\label{eq:B-smooth}
		| f(x) {}-{} f(\bar x) {}-{} \innprod{\nabla f(\bar x)}{x-\bar x}| {}\leq{} \D(x,\bar x) \quad \forall x\in\R^n.
	\end{equation}
\end{proposition}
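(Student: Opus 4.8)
The plan is to reduce the three-way equivalence to standard convex analysis for the auxiliary function $g\coloneqq f-r\phi$. By \cref{assum:a1,assum:a2} the reference function $\phi$ is finite-valued and of class $\mathcal C^1$ on $\R^n$, so $g$ is again proper and lsc, and the subdifferential sum rule for a $\mathcal C^1$ perturbation \cite[Exercise 8.8(c)]{RoWe98} gives $\OLDpartial g=\OLDpartial f-r\nabla\phi$; equivalently, $(\bar x,\bar v)\in\graph\OLDpartial f$ if and only if $(\bar x,\bar w)\in\graph\OLDpartial g$ with $\bar w\coloneqq\bar v-r\nabla\phi(\bar x)$. Unfolding the definition of $\D$ in a one-line computation, I would check that under this correspondence \eqref{eq:B-ineq} written at $(\bar x,\bar v)$ is exactly the convex subgradient inequality $g(x)\ge g(\bar x)+\innprod{\bar w}{x-\bar x}$ at $(\bar x,\bar w)$, and that the relation in \ref{thm:B-:iii} is, after the same substitution, exactly monotonicity of $\OLDpartial g$. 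Thus the three assertions become: \ref{thm:B-:i} $g$ is convex; \ref{thm:B-:ineq} the convex subgradient inequality holds at every point of $\graph\OLDpartial g$; \ref{thm:B-:iii} $\OLDpartial g$ is monotone.

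I would then close the loop \ref{thm:B-:i}$\Rightarrow$\ref{thm:B-:ineq}$\Rightarrow$\ref{thm:B-:iii}$\Rightarrow$\ref{thm:B-:i}. The first implication is \cite[Proposition 8.12]{RoWe98}: for a proper lsc convex function the limiting subgradients coincide with the convex subgradients. The second is immediate, by adding the subgradient inequalities at two pairs $(x,w),(x',w')\in\graph\OLDpartial g$ crosswise, which yields $\innprod{w-w'}{x-x'}\ge 0$. The third implication carries the only genuine content: it is the subdifferential characterization of convexity for proper lsc functions, \cite[Theorem 12.17]{RoWe98}. I expect this to be the main obstacle in the sense that it should \emph{not} be attacked by trying to rebuild $g$ directly as the pointwise supremum of its affine minorants indexed by $\graph\OLDpartial g$ --- density of $\dom\OLDpartial g$ in $\dom g$ does not by itself propagate $g=\biconj g$ beyond $\dom\OLDpartial g$ --- so routing through monotonicity is the right move.

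For the \B-smoothness characterization, recall that $f$ is \B-smooth iff $f$ and $-f$ are proper and $f+\phi$, $\phi-f$ are convex; properness of both forces $f$ to be real-valued, so that $f+\phi$ and $\phi-f$ are finite convex functions on $\R^n$. I would first extract smoothness: the regular-subdifferential sum rule \cite[Exercise 8.8(c)]{RoWe98} applied to $f=(f+\phi)-\phi$ and to $-f=(\phi-f)-\phi$ shows that $\widehat\OLDpartial f(\bar x)$ and $\widehat\OLDpartial(-f)(\bar x)$ are nonempty at every $\bar x$, and a short comparison of the $\liminf$ and $\limsup$ of the difference quotient at such a point forces $f$ to be differentiable there; since then $\nabla f=\nabla(f+\phi)-\nabla\phi$ with $f+\phi$ finite convex and differentiable on all of $\R^n$, continuity of $\nabla(f+\phi)$ \cite[Theorem 25.5]{Roc70} gives $f\in\mathcal C^1(\R^n)$. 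Now $\graph\OLDpartial f=\graph\nabla f$, so applying the already-established equivalence \ref{thm:B-:i}$\Leftrightarrow$\ref{thm:B-:ineq} with $r=-1$ to $f$ and to $-f$ separately turns ``$f+\phi$ convex'' into $f(x)-f(\bar x)-\innprod{\nabla f(\bar x)}{x-\bar x}\ge-\D(x,\bar x)$ and ``$\phi-f$ convex'' into the reverse inequality $f(x)-f(\bar x)-\innprod{\nabla f(\bar x)}{x-\bar x}\le\D(x,\bar x)$, whose conjunction is \eqref{eq:B-smooth}; conversely, if $f\in\mathcal C^1(\R^n)$ satisfies \eqref{eq:B-smooth} then both one-sided inequalities hold, hence by the same equivalence $f$ and $-f$ are \B-weakly convex and, being real-valued, proper, i.e.\ $f$ is \B-smooth. (This last part can also simply be quoted as the full-domain specialization of \cite[Lemma 2.1]{BSTV2018}.)
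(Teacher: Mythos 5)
Your proposal is correct and follows essentially the same route as the paper's proof: reduce everything to \(g\coloneqq f-r\phi\) via \(\OLDpartial(f-r\phi)=\OLDpartial f-r\nabla\phi\), close the cycle \ref{thm:B-:i}\(\Rightarrow\)\ref{thm:B-:ineq}\(\Rightarrow\)\ref{thm:B-:iii}\(\Rightarrow\)\ref{thm:B-:i} through monotonicity of \(\OLDpartial g\) and \cite[Theorem 12.17]{RoWe98}, and obtain \eqref{eq:B-smooth} by combining the \(r=-1\) case of \ref{thm:B-:ineq} for \(f\) and \(-f\). The only deviation is that you derive continuous differentiability of a \B-smooth \(f\) from scratch (nonemptiness of \(\widehat\OLDpartial f\) and \(\widehat\OLDpartial(-f)\) everywhere, forcing differentiability, then \cite[Theorem 25.5]{Roc70}), where the paper simply cites \cite[Proposition 2.5]{ahookhosh2021bregman}; your argument is sound and makes that step self-contained.
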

\begin{proof}
	We start by observing that, since $\phi$ is smooth, $\OLDpartial (f-r\phi)=\OLDpartial f-r\nabla\phi$.
	\begin{itemize}
	\item``\ref{thm:B-:i} \(\Rightarrow\) \ref{thm:B-:ineq}''
		For \((\bar x,\bar v)\in\graph\OLDpartial f\), convexity of $f{}-{}r\phi$ yields
		$$
		(f{}-{}r\phi)(x) \geq (f{}-{}r\phi)(\bar x) + \innprod{\bar v{}-{}r\nabla \phi(\bar x)}{x-\bar x}\quad \forall x \in \bR^n,
		$$
		which after reordering is \cref{eq:B-ineq}.
	\item``\ref{thm:B-:ineq} \(\Rightarrow\) \ref{thm:B-:iii}'' For $(x, v), (x',v') \in \graph \OLDpartial f$, \cref{eq:B-ineq} yields
		$$
			f(x') \geq f(x) + \innprod{v}{x' - x} {}+{} r\D(x', x)
		~~\text{and}~~
			f(x) \geq f(x') + \innprod{v'}{x - x'} {}+{} r\D(x, x').
		$$
		By summing the two inequalities and reordering, assertion \ref{thm:B-:iii} is obtained.
	\item``\ref{thm:B-:iii} \(\Rightarrow\) \ref{thm:B-:i}''
		By assumption, for all $(x, v), (x',v') \in\graph\OLDpartial f$ it holds that $\innprod{x-x'}{v-v'}\geq r\innprod{x-x'}{\nabla\phi(x) -\nabla\phi(x')}$.
		By suitably rearranging, this can equivalently be written as
	\ifsiam\else
		the inequality
	\fi
		$\innprod{x-x'}{v-r\nabla \phi(x)-(v'-r\nabla \phi(x'))}\geq 0$.
		Let now two pairs $(x,y), (x',y') \in \graph \OLDpartial (f{}-{}r\phi)$ be fixed.
		Since $\OLDpartial (f{}-{}r\phi) = \OLDpartial f {}-{} r\nabla \phi$ this means that $y = v-r\nabla \phi(x)$ for some $v \in \OLDpartial f(x)$ and $y' = v'-r\nabla \phi(x)$ for some $v' \in \OLDpartial f(x')$. In light of the previous inequality we obtain $\innprod{x-x'}{y-y'}\geq 0$. This implies that $\OLDpartial (f{}-{}r\phi)$ is monotone, and convexity of \(f{}-{}r\phi\) then follows from \cite[Theorem 12.17]{RoWe98}.
	
	\item	We now prove the final claim.
		If \(f\) is \B-smooth, since both $f$ and $-f$ are proper, $f$ must be finite-valued. It then follows from \cite[Proposition 2.5]{ahookhosh2021bregman} that $f$ is continuously differentiable.
		Since, by definition, $f$ and $-f$ are both \B-weakly convex, in light of the shown equivalence among assertions \ref{thm:B-:i} and \ref{thm:B-:ineq}, \eqref{eq:B-ineq} holds for both $f$ and $-f$ with $r=-1$.
		By combining the two resulting inequalities and using the fact that $\OLDpartial f=\nabla f$, \eqref{eq:B-smooth} is obtained.
		Conversely, if \eqref{eq:B-smooth} holds and \(f\) is continuously differentiable, then both $f$ and $-f$ satisfy \eqref{eq:B-ineq} with $r=-1$.
		By invoking again the shown equivalence of the first two assertions, we conclude that both $f$ and $-f$ are (proper and) \B-weakly convex, hence \B-smooth by definition.
		\qedhere
		\end{itemize}%
\end{proof}
Note that in contrast to \cite[Lemma 2.1]{BSTV2018} we do not assume smoothness in \cref{def:B-smooth} as this is implied automatically; also see \cite[Propostion 2.5]{ahookhosh2021bregman}.
Under the restriction that $f$ is finite-valued and locally Lipschitz the equivalence between \cref{thm:B-:i,thm:B-:ineq} in the \B-weakly convex case, i.e., for $r=-1$ was provided in \cite[Lemma 2.2]{davis2018stochastic} for more general Legendre functions $\phi$ that possibly do not have full domain.

In addition, we provide an alternative definition of \B-weak/strong convexity and \B-smoothness involving an \emph{envelope} representation that identifies a function as a hull of supporting elementary functions.
This observation naturally leads to framing these notions within the scope of \(\Phi\)-convexity, cf. \cref{defin:Phicvx}, and in doing so it enables the possibility to employ its general yet very powerful tools.
In this perspective, note that with \(\Phi=-\D\), left and right \(\Phi\)-conjugates amount to left and right (negative) Bregman--Moreau envelopes \(\conj f=-\env f\) and \(\conj*g=-\env*g\), where
\begin{align}
\label{eq:BMenv}
	\env f(y)
{}\coloneqq{}
	\inf\set{f+\D({}\cdot{},y)}
\quad\text{and}\quad &
	\env*g(x)
{}\coloneqq{}
	\inf\set{g+\D(x,{}\cdot{})}.
\shortintertext{%
	Likewise, the attainment sets, which constitute the respective left and right Bregman proximal mappings, are the inverse \(\Phi\)-subdifferentials.
	By changing sign of the coupling function, negative Bregman--Moreau envelopes are replaced by their Bregman--Klee counterparts:
	for \(\Phi=\D\), one has that \(\conj f=\klee f\) and \(\conj*g=\klee*g\), where
}
\label{eq:BKenv}
	\klee f(y)
{}\coloneqq{}
	\sup\set{\D({}\cdot{},y)-f}
\quad\text{and}\quad &
	\klee*g(x)
{}\coloneqq{}
	\sup\set{\D(x,{}\cdot{})-g},
\end{align}
and once again the attainment sets coincide with the inverse \(\Phi\)-subdifferentials.

	First we provide some useful formulas that express these Bregman envelopes in terms of convex conjugates.
	To this end, we introduce the following convenient notation for the reflection:
	\[
		h_-(x)\coloneqq h(-x).
	\]
	We point out that reflection and convex conjugation commute, namely,
	\begin{equation}\label{eq:conjugate_minus}
		(h_-)^*=(h^*)_-
	\end{equation}
	holds for any \(\func{h}{\R^n}{\Rinf}\), see, e.g. \cite[Lemma 7.1]{cabot2017envelopes} or \cite[Proposition 13.23(v)]{BaCo110}.
	This fact allows us to unambiguously omit the brackets in favor of a lighter notation $h_-^*$.

	\begin{lemma}\label{thm:MoreauKlee}%
		For any \(\func{f,g}{\R^n}{\Rinf}\), the following identities hold:
		\begin{enumerate}
		\item\label{thm:env*_conj}%
			\(\env* g=\phi-(g\circ\nabla\phi^*+\phi^*)^*\).
		\item\label{thm:env_conj}%
			\(\env f\circ\nabla\phi^*=\phi^*-(f+\phi )^*\).
		\item\label{thm:env_biconj}%
			\(-\env*(-\env f)=(f + \phi)^{**}-\phi\).
		\item\label{thm:klee*_conj}%
			\(\klee* g=\phi+(g\circ\nabla\phi^*-\phi^*)_-^*\).
		\item\label{thm:klee_conj}%
			\(\klee f\circ\nabla\phi^*=\phi^*+(f-\phi)_-^*\).
		\item\label{thm:klee_biconj}%
			\(\klee*(\klee f)=\phi+(f-\phi)^{**}\).
		\end{enumerate}
	\end{lemma}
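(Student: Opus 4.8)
The plan is to derive all six identities from a single change of variables in the defining optimization problems, exploiting the fact that under \cref{assum:a1,assum:a2,assum:a3} the gradient $\nabla\phi\colon\R^n\to\R^n$ is a bijection with inverse $\nabla\phi^*$, and that the Fenchel equality $\phi(\nabla\phi^*(v))+\phi^*(v)=\innprod{v}{\nabla\phi^*(v)}$ holds for every $v\in\R^n$ since $\phi$ is of Legendre type.

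For assertion \ref{thm:env*_conj} I would expand $\env*g(x)=\inf_y\set{g(y)+\phi(x)-\phi(y)-\innprod{\nabla\phi(y)}{x-y}}$, pull $\phi(x)$ out of the infimum, substitute $v=\nabla\phi(y)$ (equivalently $y=\nabla\phi^*(v)$), and use the Fenchel equality to rewrite $\innprod{\nabla\phi(y)}{y}-\phi(y)=\phi^*(v)$; what remains is exactly $\phi(x)-\sup_v\set{\innprod{v}{x}-(g\circ\nabla\phi^*+\phi^*)(v)}$, i.e.\ the claimed formula. Assertion \ref{thm:env_conj} is obtained analogously, this time pulling out the $y$-dependent terms $-\phi(y)+\innprod{\nabla\phi(y)}{y}$, recognizing the residual supremum over $x$ as $(f+\phi)^*(\nabla\phi(y))$, and finally substituting $y=\nabla\phi^*(v)$ and invoking the Fenchel equality once more. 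Assertions \ref{thm:klee*_conj} and \ref{thm:klee_conj} follow the same two computations with the sign of the coupling flipped; the only new ingredient is that the linear term now appears as $\innprod{-v}{x}$ (resp.\ $\innprod{-\nabla\phi(y)}{x}$), which is absorbed into a reflection, producing the $(\cdot)_-^*$ in place of $(\cdot)^*$.

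The remaining two identities are then purely formal consequences. For \ref{thm:env_biconj} I would apply \ref{thm:env*_conj} with $g=-\env f$ and simplify $(-\env f)\circ\nabla\phi^*+\phi^*$ using \ref{thm:env_conj}, which collapses to $(f+\phi)^*$; hence $\env*(-\env f)=\phi-(f+\phi)^{**}$, and negating gives the claim. For \ref{thm:klee_biconj} I would apply \ref{thm:klee*_conj} with $g=\klee f$ and simplify $(\klee f)\circ\nabla\phi^*-\phi^*$ using \ref{thm:klee_conj}, which collapses to $(f-\phi)_-^*$; one last use of \eqref{eq:conjugate_minus} together with the involution property $(h_-)_-=h$ turns the double operation $((f-\phi)_-^*)_-^*$ into $(f-\phi)^{**}$, yielding the stated identity.

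None of these steps is genuinely hard; the only points requiring care are (i) verifying that the substitution $v=\nabla\phi(y)$ is a legitimate reparametrization of the inf/sup because $\nabla\phi$ maps onto all of $\R^n$ --- here \cref{assum:a2,assum:a3} are exactly what is needed --- and (ii) the bookkeeping of the reflections in \ref{thm:klee*_conj}, \ref{thm:klee_conj}, and \ref{thm:klee_biconj}, where \eqref{eq:conjugate_minus} and $(h_-)_-=h$ must be applied in the right order. Throughout, the identities are understood in the extended arithmetic, so no properness of $f$, $g$, or $f\pm\phi$ needs to be assumed.
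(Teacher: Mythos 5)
Your proposal is correct and follows essentially the same route as the paper: the paper establishes \cref{thm:klee*_conj,thm:klee_conj} by exactly the reparametrization $y=\nabla\phi^*(v)$ combined with the Fenchel equality (its identity \eqref{eq:Bdistphi*}), and obtains \cref{thm:klee_biconj} by substituting \cref{thm:klee_conj} into \cref{thm:klee*_conj} and invoking \eqref{eq:conjugate_minus}, just as you do. The only difference is that the paper cites \cref{thm:env*_conj,thm:env_conj,thm:env_biconj} from the literature rather than proving them, whereas you carry out the (sign-flipped) analogous computation directly, which is perfectly valid and makes the lemma self-contained.
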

\begin{proof}
		The first three identities follow by \cite[Proposition 2.4(ii)]{bauschke2017regularizing}, \cite[Theorem 2.4]{kan2012moreau} and \cite[Proposition 2.14]{wang2021bregman} respectively.
		Thus it suffices to show the identities involving the Klee envelopes.
		Thanks to the fact that $\nabla \phi(\nabla\phi^*(v))=v$ and $\langle v,\nabla \phi^*(v) \rangle=\phi^*(v) + \phi(\nabla \phi^*(v))$, we have that
		\begin{equation}\label{eq:Bdistphi*}
			\D({}\cdot{},\nabla\phi^*(v))
		{}={}
			\phi - \phi(\nabla\phi^*(v))-\innprod{v}{{}\cdot{}-\nabla\phi^*(v)}
		{}={}
			\phi + \phi^*(v)-\innprod{v}{{}\cdot{}}
		\end{equation}
		holds for every \(v\in\R^n\).
		By using the fact that $\func{\nabla \phi^*}{\bR^n}{\bR^n}$ is surjective, for \(x\in\R^n\) we have
		\begin{align*}
			\klee* g(x)
		&=
			\sup_{v \in \bR^n}\set{\D(x,\nabla \phi^*(v))-g(\nabla\phi^*(v))}
		\\
		&\overrel*{\eqref{eq:Bdistphi*}}
			\sup_{v \in \bR^n}\set{\phi(x) + \innprod{-x}{v}-\bigl(g(\nabla\phi^*(v))-\phi^*(v)\bigr)},
		\shortintertext{%
			and assertion \ref{thm:klee*_conj} follows from the definition of the convex conjugate.
			Similarly, for any $v\in \bR^n$
		}
			\klee f\circ\nabla \phi^*(v)
		&=
			\sup_{x \in \bR^n}\set{\D(x,\nabla \phi^*(v))-f(x)}
		\\
		&\overrel*{\eqref{eq:Bdistphi*}}
			\sup_{x \in \bR^n}\set{\phi^*(v)+\innprod{x}{-v}-\bigl(f(x)-\phi(x)\bigr)},
		\end{align*}
		which yields assertion \ref{thm:klee_conj}. 
		As a consequence of \ref{thm:klee_conj}, we have $(f-\phi)_-^*= \klee f\circ\nabla\phi^* - \phi^*$. Taking conjugates on both sides yields $((f-\phi)_-^*)^* = (\klee f\circ\nabla\phi^* - \phi^*)^*$. Thus we have the following equalities
		$$
		((f-\phi)_-^*)_-^* + \phi = (\klee f\circ\nabla\phi^* - \phi^*)_-^* + \phi=  \klee*(\klee f),
		$$
		where the last equality follows from \ref{thm:klee*_conj}. 
		In view of \cref{eq:conjugate_minus} we have $((f-\phi)_-^*)_-^*=(f-\phi)^{**}$ and \ref{thm:klee_biconj} is obtained. 
	\end{proof}

Acquainted with these preliminaries, we can detail the anticipated equivalent formulation of \cref{defin:B-} in terms of \(\Phi\)-convexity. 
\begin{proposition}[\(\Phi\)- and hull characterization of \B-convexity]\label{thm:B-Phi}%
	Let \(r=-1\) (resp. \(r=+1\)) and let \(\Phi=r\D\).
	For any proper \(\func{f}{\R^n}{\Rinf}\) the following are equivalent:%
	\begin{enumerateq}
	\item\label{thm:B-Phi:B-conv}%
		\(f\) is lsc and \B-weakly convex (resp. \B-strongly convex);
	\item\label{thm:B-Phi:Phi-conv}%
		$f$ is $\Phi$-convex;
	\item\label{thm:B-Phi:bienv}%
		$f=-\env*(-\env f)$~ (resp. $f=\klee*(\klee f)$);
	\item\label{thm:B-Phi:env}%
		there exists a function $\func{g}{\bR^n}{\exR}$ such that $f=-\env*g$~ (resp. $f=\klee*g$);
	\item\label{thm:B-Phi:sup}%
		$f$ is the pointwise supremum over all functions of the form $r\phi + \innprod{v}{\cdot} + \delta$, with $(v, \delta) \in \bR^n \times \exR$, that are majorized by $f$;
	\item \label{thm:B-Phi:6}
		\(f\) is lsc and \(\subdiff f= \nabla \phi^* \circ (\nabla\phi-r\OLDpartial f)\).
	\end{enumerateq}
\end{proposition}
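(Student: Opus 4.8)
The plan is to make \ref{thm:B-Phi:Phi-conv} the hub and to carry the flag $r\in\{\pm1\}$ throughout. The block \ref{thm:B-Phi:Phi-conv} $\Leftrightarrow$ \ref{thm:B-Phi:bienv} $\Leftrightarrow$ \ref{thm:B-Phi:env} $\Leftrightarrow$ \ref{thm:B-Phi:sup} is a translation of the abstract results of \cref{sec:convexity} once the coupling is unwound: for $\Phi=r\D$ one has $\conj f=-\env f$ and $\conj*g=-\env*g$ when $r=-1$, while $\conj f=\klee f$ and $\conj*g=\klee*g$ when $r=+1$, so that $\biconj f$ equals $-\env*(-\env f)$ for $r=-1$ and $\klee*(\klee f)$ for $r=+1$. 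With these identifications, \cref{thm:Phicvx} gives \ref{thm:B-Phi:Phi-conv} $\Leftrightarrow$ \ref{thm:B-Phi:bienv} $\Leftrightarrow$ \ref{thm:B-Phi:env} at once. For \ref{thm:B-Phi:sup} I would invoke \cref{thm:bconj:conv}, which represents $\biconj f$ as the pointwise supremum of the elementary functions $\Phi({}\cdot{},y)-\beta=r\D({}\cdot{},y)-\beta$ that are majorized by $f$. Writing $r\D(x,y)-\beta=r\phi(x)+\innprod{-r\nabla\phi(y)}{x}+(r\innprod{\nabla\phi(y)}{y}-r\phi(y)-\beta)$ and using that $\nabla\phi$ is a bijection of $\R^n$ (\cref{assum:a1,assum:a2,assum:a3}), the substitution $(y,\beta)\mapsto(v,\delta)$ with $v=-r\nabla\phi(y)$ is a bijection of $\R^n\times\exR$ that carries the family $\set{r\D({}\cdot{},y)-\beta}[(y,\beta)\in\R^n\times\exR]$ onto $\set{r\phi+\innprod{v}{{}\cdot{}}+\delta}[(v,\delta)\in\R^n\times\exR]$ and preserves the majorization constraint; hence $\biconj f$ is precisely the supremum in \ref{thm:B-Phi:sup}, and $\Phi$-convexity of $f$ (equivalently $f=\biconj f$, by \cref{thm:Phicvx}) says that $f$ coincides with it.

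To connect this block with \ref{thm:B-Phi:B-conv}, I would use \cref{thm:env_biconj} and \cref{thm:klee_biconj}, by which \ref{thm:B-Phi:bienv} is the identity $(f-r\phi)^{**}=f-r\phi$. Since $f$ is proper and $\phi$ is finite-valued, $f-r\phi$ is proper, so the Fenchel--Moreau theorem makes this equivalent to $f-r\phi\in\Gamma_0(\R^n)$; and because $\phi$ is continuous, lower semicontinuity of $f-r\phi$ is equivalent to that of $f$. This is exactly \ref{thm:B-Phi:B-conv}.

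There remains \ref{thm:B-Phi:B-conv} $\Leftrightarrow$ \ref{thm:B-Phi:6}. As $\phi\in\mathcal C^1$, the sum rule gives $\OLDpartial f=\OLDpartial(f-r\phi)+r\nabla\phi$, and the identity $r\D(x,\bar y)-r\D(\bar x,\bar y)=r\phi(x)-r\phi(\bar x)-r\innprod{\nabla\phi(\bar y)}{x-\bar x}$ shows that $\bar y\in\subdiff f(\bar x)$ holds precisely when $-r\nabla\phi(\bar y)$ is a \emph{global} subgradient of $f-r\phi$ at $\bar x$. Under \ref{thm:B-Phi:B-conv} the function $f-r\phi$ is convex, so these global subgradients form exactly $\OLDpartial(f-r\phi)(\bar x)=\OLDpartial f(\bar x)-r\nabla\phi(\bar x)$, and solving $-r\nabla\phi(\bar y)=\bar v-r\nabla\phi(\bar x)$ for $\bar y$ (using $r^2=1$ and $(\nabla\phi)^{-1}=\nabla\phi^*$) gives $\subdiff f(\bar x)=\nabla\phi^*(\nabla\phi(\bar x)-r\OLDpartial f(\bar x))$, that is \ref{thm:B-Phi:6}. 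Conversely, the inclusion ``$\supseteq$'' in \ref{thm:B-Phi:6} means that for every $(\bar x,\bar v)\in\graph\OLDpartial f$ the point $\bar y\coloneqq\nabla\phi^*(\nabla\phi(\bar x)-r\bar v)$ lies in $\subdiff f(\bar x)$; writing the $\Phi$-subgradient inequality \eqref{eq:subdiff} for two such pairs $(x,v),(x',v')\in\graph\OLDpartial f$ and adding them, all $\phi$-terms cancel and one is left with the monotone relation $\innprod{x-x'}{v-v'}\geq r\innprod{x-x'}{\nabla\phi(x)-\nabla\phi(x')}$ characterizing \B-convexity in \cref{thm:B-weak}; since $f$ is proper and lsc, \cref{thm:B-weak} then yields \ref{thm:B-Phi:B-conv}. (Alternatively, this ``$\supseteq$'' inclusion places $\OLDpartial(f-r\phi)$ inside the Fenchel subdifferential of $f-r\phi$, which is monotone, and a proper lsc function with monotone subdifferential is convex by \cite[Theorem 12.17]{RoWe98}.)

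The only step that is not a mechanical unwinding of definitions is the converse \ref{thm:B-Phi:6} $\Rightarrow$ \ref{thm:B-Phi:B-conv}: one must recover convexity from an \emph{identity of set-valued maps}. The point to stress is that only the inclusion $\subdiff f\supseteq\nabla\phi^*\circ(\nabla\phi-r\OLDpartial f)$ is used, and that it upgrades every limiting subgradient of $f-r\phi$ to a genuine global subgradient, after which \cref{thm:B-weak} (or, equivalently, monotonicity of the Fenchel subdifferential) closes the loop. All other ingredients are \cref{thm:Phicvx}, \cref{thm:bconj:conv}, \cref{thm:MoreauKlee}, the bijectivity of $\nabla\phi$, and the classical Fenchel--Moreau theorem.
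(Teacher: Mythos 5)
Your proof is correct and follows essentially the same route as the paper's: the $\Phi$-convexity block is handled with \cref{thm:Phicvx}, \cref{thm:bconj:conv} and the change of variables $v=-r\nabla\phi(y)$, the bridge to \B-convexity goes through \cref{thm:env_biconj,thm:klee_biconj} (i.e.\ the identity $(f-r\phi)^{**}=f-r\phi$), and the equivalence with the subdifferential formula rests on reading $\bar y\in\subdiff f(\bar x)$ as the global subgradient inequality for $f-r\phi$ together with \cref{thm:B-weak}. The only (harmless) organizational difference is that you close the loop between \ref{thm:B-Phi:bienv} and \ref{thm:B-Phi:B-conv} directly via Fenchel--Moreau, whereas the paper routes the converse direction through \ref{thm:B-Phi:env} using the conjugate formulas of \cref{thm:env*_conj,thm:klee*_conj}.
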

\begin{proof}
	\begin{itemize}
	\item``\ref{thm:B-Phi:env} \(\Rightarrow\) \ref{thm:B-Phi:B-conv}''
		By assumption there is a function $g: \bR^n \to \exR$ such that $f=-\env*g$ (resp. $f=\klee*g$). It follows from \cref{thm:env*_conj} (resp. \cref{thm:klee*_conj}) that $f=-\env* g=(g\circ\nabla\phi^*+\phi^*)^* - \phi$ (resp. $f=\klee* g=\phi+(g\circ\nabla\phi^*-\phi^*)_-^*$). Since convex conjugates, as pointwise suprema over affine functions, are convex lsc and $\phi$ is smooth, $f$ is lsc and \B-weakly convex (resp. \B-strongly convex).
	\item``\ref{thm:B-Phi:B-conv} \(\Rightarrow\) \ref{thm:B-Phi:bienv}''
		If \(f-r\phi\) is proper lsc and convex, then \((f-r\phi)^{**}=f-r\phi\), and the claim then follows from \cref{thm:env_biconj} (resp. \cref{thm:klee_biconj}).
	\item``\ref{thm:B-Phi:bienv} \(\Rightarrow\) \ref{thm:B-Phi:env}''
			Consider $g=-\env f$ (resp. $g=\klee f$).
	\item``\ref{thm:B-Phi:Phi-conv} \(\Leftrightarrow\) \ref{thm:B-Phi:bienv}''
			This follows from \cref{thm:Phicvx} via the choice $\Phi:=r\D$, noting that $\biconj f \defeq -\env*(-\env f)$ (resp. $\biconj f \defeq \klee*(\klee f)$).
	\item``\ref{thm:B-Phi:Phi-conv} \(\Leftrightarrow\) \ref{thm:B-Phi:sup}''
			In light of \cref{thm:Phicvx}, \(\Phi\)-convexity is equivalent to the identity \(f=\biconj f\).
			By using the surjectivity of the map \(v\mapsto\nabla\phi^*(-rv)\), we have (recall that \(\Phi=r\D\)) invoking \cref{thm:bconj:conv}%
			\begin{equation}\label{eq:BregBiconj}
				\biconj f
			{}={}
				\sup_{(v,\beta)\in\bR^n\times\exR}\set*{r\D({}\cdot{},\nabla\phi^*(-rv))-\beta}[r\D({}\cdot{},\nabla\phi^*(-rv))-\beta\leq f].
			\end{equation}
			Since
			\(
				r\D({}\cdot{},\nabla\phi^*(-rv))
			{}={}
				r\phi+r\phi^*(-rv)+\innprod{v}{{}\cdot{}}
			\)
			by \eqref{eq:Bdistphi*}, the change of variable \(\delta=r\phi^*(-rv)-\beta\) in \eqref{eq:BregBiconj} results in the supremum over the functions \(r\phi+\innprod{v}{{}\cdot{}}+\delta\) majorized by \(f\), as in assertion \ref{thm:B-Phi:sup}.
	\item ``\ref{thm:B-Phi:B-conv} \(\Rightarrow\) \ref{thm:B-Phi:6}''
			Let $f$ be lsc and \B-weakly convex, and for $\bar x \in \dom (\nabla \phi^* \circ (\nabla \phi -r\OLDpartial f))$ let $\bar y \in \nabla \phi^*(\nabla \phi(\bar x) -r \OLDpartial f(\bar x))$, so that there exists $\bar v \in \OLDpartial f(\bar x)$ such that
			$$
				\bar y = \nabla \phi^*(\nabla \phi(\bar x) -r\bar v).
			$$
			From \cref{thm:B-weak} we obtain (recall that $\Phi:=r\D$)
			\begin{align*}
				f(x) &\geq f(\bar x) + \innprod{\bar v}{x - \bar x} + r\D(x, \bar x) \\
				&{}= f(\bar x) + r\phi(x)-r\phi(\bar x)-r\innprod{\nabla\phi(\bar x)-r\bar v}{x-\bar x} \\
				&\overrel{\eqref{eq:Bdistphi*}}
				f(\bar x)
				{}+{}
				r\D(x,\nabla\phi^*(\nabla\phi(\bar x)-r\bar v))
				{}-{}
				r\D(\bar x, \nabla\phi^*(\nabla\phi(\bar x)-r\bar v)) \\
				&=f(\bar x)+\Phi(x,\bar y)-\Phi(\bar x,\bar y)
				\quad
				\forall x \in \bR^n,
			\end{align*}
			and thus $\bar y \in \subdiff f(\bar x)$ by definition.
			Now choose $\bar y \in \subdiff f(\bar x)$.
			In view of \cref{thm:phi_subgrad_existence}, this means that $\bar x \in \argmin_{x \in \bR^n} f(x) -r\D(x, \bar y)$, and therefore
			\(
				0 \in \OLDpartial f(\bar x) -r\nabla \phi(\bar x) +r\nabla \phi(\bar y)
			\).
			Reordering yields $\bar y \in \nabla \phi^*(\nabla \phi(\bar x)-r\OLDpartial f(\bar x))$.
		\item ``\ref{thm:B-Phi:6} \(\Rightarrow\) \ref{thm:B-Phi:B-conv}''
				Conversely, let \(f\) be lsc and assume that $\subdiff f= \nabla \phi^* \circ (\nabla\phi-r\OLDpartial f)$.
				Let $\bar x \in \dom \OLDpartial f$ and for $\bar v \in \OLDpartial f(\bar x)$ let \(\bar y\coloneqq\nabla \phi^*(\nabla \phi(\bar x) -r\bar v)\).
				Thus we have \(\bar y\in \subdiff f(\bar x)\) which means by definition (again recall that $\Phi:=r\D$)
				$$
					f(x) \geq f(\bar x) +r\D(x, \bar y)-r\D(\bar x, \bar y) \quad \forall x \in \bR^n.
				$$
				Applying the calculation above using \cref{eq:Bdistphi*} we obtain \eqref{eq:B-ineq} and thus in view of \cref{thm:B-weak} $f$ is \B-convex.
	\qedhere
	\end{itemize}
\end{proof}
The result above can be seen as a generalization of \cite[Example 11.26(d)]{RoWe98} for the Euclidean proximal hull.
We remark that the equivalence among \cref{thm:B-Phi:B-conv,thm:B-Phi:bienv,thm:B-Phi:sup} was previously shown in \cite[Proposition 2.14]{wang2021bregman} for the weakly convex case (i.e. $r=-1$) for more general reference functions $\phi$ that need not have full domain.

In light of \cref{thm:B-Phi} we have the following result:
\begin{corollary}[\B-smooth functions as pointwise infima]\label{thm:B-smooth:inf}%
	For any \(f\in\Gamma_0(\R^n)\) the following are equivalent:
	\begin{enumerateq}
	\item\label{thm:B-smooth:inf:smooth}
		\(f\) is \B-smooth;
	\item\label{thm:B-smooth:inf:env}
		\(f=\env* g\) for some proper function $\func{g}{\bR^n}{\exR}$.
	\end{enumerateq}
\end{corollary}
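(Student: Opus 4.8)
The plan is to derive this from \cref{thm:B-Phi} applied to $-f$ (with $r=-1$, so $\Phi=-\D$), once the interplay between properness and finite-valuedness has been sorted out. As a preliminary observation, note that $f\in\Gamma_0(\R^n)$ is automatically proper, lsc and \B-weakly convex (being both $f$ and $\phi$ convex), so \cref{defin:B-} shows that $f$ is \B-smooth if and only if $-f$ is proper and \B-weakly convex. Moreover $-f$ is proper exactly when $f$ is finite-valued, and in that case $f$ — a finite convex function on all of $\R^n$ — is continuous, hence $-f$ is then lsc as well.

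For ``\ref{thm:B-smooth:inf:env}\(\Rightarrow\)\ref{thm:B-smooth:inf:smooth}'', let $f=\env* g$ with $g$ proper. I would first observe that $f$ is finite-valued: picking any $y_0\in\dom g$ gives $f(x)=\env* g(x)\leq g(y_0)+\D(x,y_0)<\infty$ for every $x\in\R^n$, because $\phi$, and hence $\D({}\cdot{},y_0)$, is finite-valued on $\R^n$; together with $f>-\infty$ (as $f\in\Gamma_0(\R^n)$), this makes $f$ finite-valued, so $-f$ is proper and lsc. Since $-f=-\env* g$ is precisely what condition \cref{thm:B-Phi:env} asserts for the proper function $-f$, \cref{thm:B-Phi} yields that $-f$ satisfies \cref{thm:B-Phi:B-conv}, i.e.\ $-f$ is \B-weakly convex. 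By the preliminary observation, $f$ is \B-smooth.

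For the converse, let $f$ be \B-smooth. By definition $-f$ is proper and \B-weakly convex; since $-f$ is proper, $f$ is finite-valued and, by the preliminary observation, $-f$ is lsc. Thus $-f$ satisfies \cref{thm:B-Phi:B-conv}, and \cref{thm:B-Phi} (applied to the proper function $-f$) furnishes via \cref{thm:B-Phi:env} some $g:\R^n\to\Rinf$ with $-f=-\env* g$, i.e.\ $f=\env* g$. It remains to check that $g$ can be taken proper; in fact here every $g$ with $f=\env* g$ is proper: if $g$ attained $-\infty$ at some $y_0$ then $\env* g(x)\leq g(y_0)+\D(x,y_0)=-\infty$ for all $x$, forcing $f\equiv-\infty$, while if $g\equiv+\infty$ then $\env* g\equiv+\infty$, forcing $f\equiv+\infty$; both contradict $f\in\Gamma_0(\R^n)$.

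The only delicate aspect is this properness bookkeeping: \cref{thm:B-Phi} needs a proper argument and outputs merely an extended-real-valued $g$ in \cref{thm:B-Phi:env}, so in the ``\(\Leftarrow\)'' direction one must first promote $f$ to finite-valued (using that $\phi$, hence each $\D({}\cdot{},y)$, is finite on $\R^n$), and in the ``\(\Rightarrow\)'' direction one must observe that the representer $g$ is automatically proper. Everything else is a direct consequence of \cref{thm:B-Phi} and \cref{defin:B-}.
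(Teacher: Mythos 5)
Your proof is correct and follows essentially the same route as the paper's: both directions reduce to \cref{thm:B-Phi} applied to $-f$ with $r=-1$, after sorting out finite-valuedness of $f$ and hence properness and lower semicontinuity of $-f$. The only addition is your explicit check that the representer $g$ obtained in the ``\ref{thm:B-smooth:inf:smooth}$\Rightarrow$\ref{thm:B-smooth:inf:env}'' direction is automatically proper, a point the paper's proof leaves implicit even though the statement of \ref{thm:B-smooth:inf:env} requires it; this is a worthwhile clarification but not a different argument.
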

\begin{proof}
	\begin{itemize}
	\item``\ref{thm:B-smooth:inf:smooth} \(\Rightarrow\) \ref{thm:B-smooth:inf:env}''
		Let \(f \in \Gamma_0(\bR^n)\) be \B-smooth. This means that both $-f$ and $f$ are \B-weakly convex and proper. This implies that $f$ is finite-valued and by convexity continuous implying that $-f$ is lsc. In light of \cref{thm:B-Phi} this means that there is some $g:\bR^n \to \exR$ such that $-f = -\env* g$ implying that $f = \env* g$.
	\item``\ref{thm:B-smooth:inf:env} \(\Rightarrow\) \ref{thm:B-smooth:inf:smooth}''
		Let \(f \in \Gamma_0(\bR^n)\) such that \(f=\env* g\) for some function $\func{g}{\bR^n}{\exR}$. By properness of $f$ we have that $f >-\infty$ and there is some $x \in \bR^n$ such that $\infty >f(x)\defeq\inf_{y\in \bR^n}g(y)+\D(x,y)$ implying that $\infty > g(y)$ for some $y \in \bR^n$ and thus $\infty > g(y)+\D(\cdot,y)\geq f$. This implies that $f$ is finite-valued and thus $-f=-\env* g$ is proper.
		Thanks to \cref{thm:B-Phi} we have that $-f$ is \B-weakly convex and, due to convexity and properness of $f$, that $f$ is \B-smooth.
		\qedhere
	\end{itemize}
\end{proof}

\subsection{Anisotropic convexity and smoothness}\label{sec:aniso}\leavevmode
The motivation of this work stems from the observation that a conjugate duality between \B-smoothness and \B*-strong convexity in general does not hold, as the following counterexample reveals.
	\begin{example} \label{ex:counter_duality_bregman}
		Let $\phi(x)=\frac{1}{3}|x|^3$. Choose $f(x) = \phi(x) + x$. Clearly, $f$ is both \B-strongly convex and \B-smooth. The convex conjugate $f^*$ amounts to $f^*(x)= \phi^*(x-1)$. A simple calculation shows that the second order derivative $(f^*-\phi^*)''(x)< 0$ is not bounded from below for $x$ near $0$ and $(\phi^*-f^*)''(x)<0$ is not bounded from below for $x$ near $1$. As a consequence $f^*$ is neither \B*-smooth nor \B*-strongly convex.
	\end{example}
	Instead, we shall introduce different notions of relative smoothness and strong convexity which we call \emph{anisotropic smoothness} and \emph{anisotropic strong convexity}, respectively.
The characterization of \(\Phi\)-convexity in \cref{thm:B-Phi:sup,thm:B-smooth:inf} related the notions of \B-strong convexity and \B-smoothness to expressions involving pointwise suprema and infima over a family of \emph{tilt}-parametrized functions \(q_i\coloneqq\phi+\innprod{y_i}{{}\cdot{}}+\beta_i\), for some \((y_i,\beta_i)\in\R^n\times\Rinf\) and with \(i\) ranging over some index set \(\mathcal{I}\). The calculus of \cite[Theorem 11.23(d)]{RoWe98} reveals that convex conjugates of such functions are to be found in the space of infima and suprema over \emph{shift}-parametrized families \(q_i^*=\phi^*({}\cdot{}-y_i)-\beta_i\).
This observation motivates the study of \(\Phi\)-convexity with a coupling \(\Phi(x,y)=r\phi(x-y)\) for \(r\in\set{\pm1}\).

\begin{remark}[\(\Phi\)-convexity and supremal convolutions]\label{rem:supconv}%
	As detailed in \cref{thm:Phicvx}, for any coupling \(\Phi\) it holds that a function is (left or right) \(\Phi\)-convex if and only if it is a (right or left) \(\Phi\)-conjugate.
	When \(\Phi(x,y)=r\phi(x-y)\) for \(r\in\set{\pm1}\), the right conjugate of a function \(\func g{\R^n}{\Rinf}\) amounts to the \emph{supremal convolution} of $r\phi$ and $-g$
	\begin{align}
		\conj*g(x)
	{}={} &
		\sup_{y \in \bR^n} r\phi(x - y) - g(y)\eqqcolon ((-g) \supconv r\phi)(x),
	\shortintertext{%
		and similarly the left conjugate of \(\func f{\R^n}{\Rinf}\) amounts to
	}
		\conj f(y)
	{}={} &
		\sup_{x \in \bR^n} r\phi(x - y) - f(x)= \sup_{x \in \bR^n} r\phi_-(y-x) - f(x) = ((-f) \supconv r\phi_-)(y).
	\end{align}
\end{remark}

As we shall see, while the conjugate of a \B-smooth function is a supremal convolution, the conjugate of a supremal convolution is not necessarily \B-smooth.
Instead, the anticipated duality of \B-smooth and \B-strongly convex functions involves their \emph{an\-isotrop\-ic} counterparts, defined next, which in general constitute proper subclasses of the classes of \(\Phi\)-convex functions with \(\Phi(x,y)=\pm\phi(x-y)\), as will be demonstrated in \cref{ex:counter_rel_str_cvx}.
\begin{mybox}
	\begin{definition}[{anisotropic weak/strong convexity and smoothness}]\label{defin:a-}%
		A proper lsc function $\func{f}{\bR^n}{\exR}$ is said to be
		\begin{enumerate}
		\item\label{def:a-weak}%
			\emph{\a-weakly convex} if for every \((\bar x,\bar v)\in\graph\OLDpartial f\) it holds that
			\begin{equation}\label{eq:a-weak}
				f(x) \geq f(\bar x)-\phi(x-\bar x + \nabla\phi^*(-\bar v)) + \phi(\nabla\phi^*(-\bar v)) \quad \forall x\in\R^n.
			\end{equation}
		\item\label{def:a-strong}%
			\emph{\a-strongly convex} if for every \((\bar x,\bar v)\in\graph\OLDpartial f\) it holds that
			\begin{equation}\label{eq:a-strong}
				f(x) \geq f(\bar x)+\phi(x-\bar x + \nabla\phi^*(\bar v)) - \phi(\nabla\phi^*(\bar v)) \quad \forall x\in\R^n.
			\end{equation}
		\item\label{def:a-smooth}%
			\emph{\a-smooth} if $f \in \mathcal{C}^1(\bR^n)$ and both \(f\) and \(-f\) are \a-weakly convex.
		\end{enumerate}
	\end{definition}
\end{mybox}
Note that \a-weak convexity is a globalized version of anisotropic prox-regularity introduced in \cite[Definition 2.13]{laude2021lower}.
Geometrically, the \a-weak convexity inequality \eqref{eq:a-weak} (resp. the \a-strong convexity inequality \eqref{eq:a-strong}) ensures that every limiting subgradient $\bar v \in \OLDpartial f(\bar x)$ generates a lower approximation to $f$ that supports the graph of $f$ at $\bar x$ and takes the form $x\mapsto -\phi(x-\bar y) -\bar \beta$ for $\bar y = \bar x - \nabla\phi^*(-\bar v)$ and $\bar \beta = - \phi(\nabla\phi^*(-\bar v))-f(\bar x)$ (resp. $x \mapsto \phi(x-\bar y) -\bar \beta$ for $\bar y = \bar x - \nabla\phi^*(\bar v)$ and $\bar \beta =\phi(\nabla\phi^*(\bar v))-f(\bar x)$).

Examples will be presented in \cref{sec:duality} alongside the anticipated conjugate dualities.
As will be shown in \cref{sec:gap}, another source of examples in the univariate case is the class of pointwise maxima/minima where the index sets in the definition of $\Phi$-convexity are finite.

	\begin{remark} \label{rem:a_smoothness_euclidean}
	As in the case of \B-convexity, and specifically in the characterization of \cref{thm:B-:ineq}, the definition of \a-weak and \a-strong convexity can be combined into a single inequality up to a sign difference.
Namely, a proper and lsc function \(\func{f}{\R^n}{\Rinf}\) is \a-weakly convex (resp. \a-strongly convex) if
for every \((\bar x,\bar v)\in\graph\OLDpartial f\) it holds that
\begin{equation}\label{eq:a-ineq}
	f(x)
{}\geq{}
	f(\bar x) + r\phi(x-\bar x + \nabla\phi^*(r\bar v)) - r\phi(\nabla\phi^*(r\bar v)) \quad \forall x \in \bR^n
\end{equation}
with \(r=-1\) (resp. \(r=1\)).
	In particular, if $\phi=\frac{1}{2\lambda }\|\cdot\|^2$ is a positive multiple of the squared Euclidean norm with $\lambda >0$, by expanding the square in \cref{eq:a-ineq}, we obtain
	\begin{align*}
	f(x) &\geq f(\bar x) + \langle \bar v, x - \bar x \rangle +\frac{r}{2\lambda}\|x - \bar x\|^2,
	\end{align*}
	so that the inequalities specialize to the Euclidean \emph{weak} ($r=-1$) and \emph{strong} ($r=+1$) convexity inequalities, respectively, and thus also coincide with the Bregman version \cref{eq:B-ineq} due to the identity $\frac{r}{2\lambda}\|x - \bar x\|^2=r\D(x, \bar x)$.
	\end{remark}

\begin{remark}[anisotropic descent inequality and algorithmic implications]\label{rem:anisoalg}%
	In the same way that Euclidean weak convexity of \(-f\) entails a quadratic upper bound in the likes of the Euclidean descent lemma, see, e.g., \cite[Propostion A.24]{bertsekas1995nonlinear}, \a-weak convexity of \(-f\) for $f \in \mathcal{C}^1(\bR^n)$ can be read as the \emph{anisotropic descent inequality}
	\begin{equation}\label{eq:a-descent}
		f(x) \leq f(\bar x) + \phi(x-\bar x + \nabla\phi^*(\nabla f(\bar x)) - \phi(\nabla\phi^*(\nabla f(\bar x))) \quad \forall x\in\R^n,
	\end{equation}
	holding at any \(\bar x \in \bR^n\).
	This bound is simplified if one considers $\phi$ such that $\phi(0)=0$ and $\nabla \phi(0)=0$.
	Given a function $f$ satisfying \cref{eq:a-descent} and a current iterate $x^t \in \bR^n$, by minimizing the upper bound \cref{eq:a-descent} at $\bar x = x^t$ we recast a recent anisotropic generalization of gradient descent, called \emph{dual space preconditioning} \cite{maddison2021dual},
	\begin{align*}
	x^{t+1} &:= \argmin_{x \in \bR^n} f(x^t) + \phi(x-x^t + \nabla\phi^*(\nabla f(x^t)) - \phi(\nabla\phi^*(\nabla f(x^t))) \\
	&= x^t - \nabla\phi^*(\nabla f(x^t)),
	\end{align*}
	where the second equality follows from the fact that $\nabla \phi(0)=0$ together with the first-order optimality condition $0 = \nabla \phi(x^{t+1}-x^t + \nabla\phi^*(\nabla f(x^t))$.
	Indeed, inserting this recurrence into \cref{eq:a-descent} generates a sufficient descent
	\begin{align*}
	f(x^{t+1}) -f(x^t) &\leq \phi(0) - \phi(\nabla\phi^*(\nabla f(x^t)))= - \phi(\nabla\phi^*(\nabla f(x^t)))= -\phi(x^{t}-x^{t+1}),
	\end{align*}
	where the second equality holds since $\phi(0)=0$.
	Note that this argument differs from the one in \cite{maddison2021dual}, where the descent is obtained by using a different criterion (see \cite[Lemma 3.8]{maddison2021dual}).
	In contrast to \cite{maddison2021dual} which focuses on the convex case only, the anisotropic descent inequality also holds for possibly nonconvex smooth functions.
\end{remark}

	\begin{lemma} \label{thm:lem_astrongconvexity_impl_convexity}%
		Let \(\func{f}{\R^n}{\Rinf}\) and consider the following statements:
		\begin{enumerate}
		\item\label{thm:=>:a-strong}%
			\(f\) is \a-strongly convex.
		\item\label{thm:=>:cvx}%
			\(f\) is proper, lsc and convex.
		\item\label{thm:=>:a-weak}%
			\(f\) is \a-weakly convex.
		\end{enumerate}
		Then, it holds that \(\ref{thm:=>:a-strong} \Rightarrow \ref{thm:=>:cvx} \Rightarrow \ref{thm:=>:a-weak}\).
	\end{lemma}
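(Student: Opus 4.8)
The plan is to prove the two implications separately, both resting on the gradient inequality
\[
\phi(u+d) \;\geq\; \phi(u) + \innprod{\nabla\phi(u)}{d}, \qquad u,d\in\R^n,
\]
which holds because $\phi$ is convex and, by \cref{assum:a1,assum:a2}, differentiable on all of $\R^n$; recall moreover from the discussion following the standing assumptions that $\nabla\phi^*$ is everywhere defined and $\nabla\phi\circ\nabla\phi^*=\id$. Note that properness and lower semicontinuity are built into \cref{defin:a-}, so that in the implication \ref{thm:=>:a-strong} $\Rightarrow$ \ref{thm:=>:cvx} it only remains to establish convexity, while in \ref{thm:=>:cvx} $\Rightarrow$ \ref{thm:=>:a-weak} it only remains to verify the subgradient inequality \eqref{eq:a-weak}.

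For \ref{thm:=>:cvx} $\Rightarrow$ \ref{thm:=>:a-weak}, fix $(\bar x,\bar v)\in\graph\OLDpartial f$. Since $f$ is convex, $\OLDpartial f$ reduces to the subdifferential of convex analysis \cite[Proposition 8.12]{RoWe98}, hence $f(x)\geq f(\bar x)+\innprod{\bar v}{x-\bar x}$ for all $x$. Setting $u\coloneqq\nabla\phi^*(-\bar v)$, so that $\nabla\phi(u)=-\bar v$, the gradient inequality with $d\coloneqq x-\bar x$ reads $\phi(x-\bar x+u)\geq\phi(u)-\innprod{\bar v}{x-\bar x}$, i.e.\ $-\phi(x-\bar x+u)+\phi(u)\leq\innprod{\bar v}{x-\bar x}$. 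Chaining this with the affine minorant yields precisely \eqref{eq:a-weak}.

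For \ref{thm:=>:a-strong} $\Rightarrow$ \ref{thm:=>:cvx}, the plan is to show that $\OLDpartial f$ is monotone and then invoke \cite[Theorem 12.17]{RoWe98} — exactly as in the proof of \cref{thm:B-weak} — to conclude that $f$ is convex. Let $(x,v),(x',v')\in\graph\OLDpartial f$ and write $a\coloneqq\nabla\phi^*(v)$, $b\coloneqq\nabla\phi^*(v')$. Applying \eqref{eq:a-strong} at $(x,v)$ evaluated at $x'$ and at $(x',v')$ evaluated at $x$, and summing the two inequalities, cancels $f(x)+f(x')$ and leaves
\[
0 \;\geq\; \phi(x'-x+a)-\phi(a)+\phi(x-x'+b)-\phi(b).
\]
Lower-bounding each $\phi$-difference via the gradient inequality, namely $\phi(x'-x+a)-\phi(a)\geq\innprod{\nabla\phi(a)}{x'-x}=\innprod{v}{x'-x}$ and symmetrically $\phi(x-x'+b)-\phi(b)\geq\innprod{v'}{x-x'}$, gives $0\geq\innprod{v-v'}{x'-x}$, that is $\innprod{v-v'}{x-x'}\geq0$; hence $\OLDpartial f$ is monotone.

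The computations are routine; the points requiring some care are the identification of $\OLDpartial f$ with the convex subdifferential in the convex case, and the correct invocation of the monotonicity characterization of convexity for proper lsc functions. It is worth noting that a more naive route — observing that \eqref{eq:a-strong} forces every $(\bar x,\bar v)\in\graph\OLDpartial f$ to generate a global affine minorant of $f$ that is tight at $\bar x$, whence $f$ is a pointwise supremum of affine functions — does not by itself yield convexity, since such minorants only constrain $f$ on $\dom\OLDpartial f$ and not on all of $\dom f$; routing through monotonicity of $\OLDpartial f$ sidesteps this gap and is the clean path.
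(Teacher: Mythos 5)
Your proof is correct and follows essentially the same route as the paper's: both implications reduce the anisotropic inequalities to the classical affine subgradient inequality via convexity of $\phi$ together with $\nabla\phi\circ\nabla\phi^*=\id$, and convexity in \ref{thm:=>:a-strong} $\Rightarrow$ \ref{thm:=>:cvx} is obtained from monotonicity of $\OLDpartial f$ via \cite[Theorem 12.17]{RoWe98}. The only cosmetic difference is that you sum the two anisotropic inequalities before linearizing $\phi$, whereas the paper linearizes first and then sums; the content is identical.
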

\begin{proof}
		If \(f\) is \a-strongly convex, then it is proper and lsc by definition.
		For any \((x_1,v_1),(x_2,v_2)\in\graph\OLDpartial f\) it follows from \eqref{eq:a-strong} that
		\begin{align*}
			f(x_2)
		&\geq
			f(x_1)+\phi(x_2-x_1+\nabla\phi^*(v_1)) - \phi(\nabla\phi^*(v_1))
		{}\geq{}
			f(x_1)+\innprod{v_1}{x_2-x_1},
		\shortintertext{%
			where the second inequality owes to convexity of \(\phi\) \cite[Theorem 2.14]{RoWe98} and the fact that \(\nabla\phi(\nabla\phi^*(v_1))=v_1\).
			Similarly,
		}
			f(x_1)
		&\geq
			f(x_2)+\innprod{v_2}{x_1-x_2}.
		\end{align*}
		By summing the two inequalities it readily follows that the mapping \(\OLDpartial f\) is monotone, and convexity of \(f\) then follows from \cite[Theorem 12.17]{RoWe98}.
		\Par
		Suppose now that \(f\) is proper, lsc and convex, and let \((\bar x,\bar v)\in\graph\OLDpartial f\).
		Then,
		\[
			f(x)
		{}\geq{}
			f(\bar x)+\innprod{\bar v}{x-\bar x}
		{}\geq{}
			f(\bar x)-\phi(x-\bar x + \nabla\phi^*(-\bar v)) + \phi(\nabla\phi^*(-\bar v))
		\quad
			\forall x\in\R^n,
		\]
		where again we used convexity of \(\phi\) and the fact that \(\nabla\phi(\nabla\phi^*(-\bar v))=-\bar v\). This shows that $f$ is \a-weakly convex.
\end{proof}

As explained above, the anisotropic subgradient inequality \eqref{eq:a-ineq} ensures that every classical subgradient $\bar v \in \OLDpartial f(\bar x)$ generates a supporting lower approximation $x\mapsto r\phi(x-\bar y) -\bar \beta$ to $f$. This means that $\bar y=\bar x - \nabla\phi^*(r\bar v)\in \subdiff f(\bar x)$ is a $\Phi$-subgradient of $f$ at $\bar x$.
As we shall see, in this case the converse is also true, that is, every $\Phi$-subgradient $\bar y\in \subdiff f(\bar x)$ takes the form $\bar y\in (\id - \nabla\phi^*\circ r\OLDpartial f)(\bar x)$.

\begin{proposition}\label{thm:subset_phi_resolvent_inv}%
	Let \(r=-1\) (resp. \(r=+1\)) and let \(\Phi(x,y)=r\phi(x-y)\).
	For any proper lsc $\func{f}{\bR^n}{\exR}$ and $\bar x \in \bR^n$ it holds that \(\subdiff f(\bar x) \subseteq \bar x - \nabla\phi^*(r\widehat\OLDpartial f(\bar x))\).
	In addition, the following conditions are equivalent:
	\begin{enumerateq}
	\item\label{thm:phi_resolvent_inv:aniso_ineq}%
		$f$ satisfies the \a-weak convexity inequality \eqref{eq:a-weak} (resp. the \a-strong convexity inequality \eqref{eq:a-strong}) at $\bar x$ for every $\bar v \in \OLDpartial f(\bar x)$;
	\item\label{thm:phi_resolvent_inv:subdiff}%
		$\subdiff f(\bar x) = \bar x - \nabla\phi^*( r\OLDpartial f(\bar x))$.
	\end{enumerateq}
	Under any of the above equivalent conditions, one has that \(\OLDpartial f(\bar x)=\widehat\OLDpartial f(\bar x)\).
	In particular, if \(f\) is either \a-weakly or \a-strongly convex, then \(\OLDpartial f=\widehat\OLDpartial f\) and \(\dom\subdiff f=\dom\widehat\OLDpartial f\).
\end{proposition}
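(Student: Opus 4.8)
The whole argument turns on one elementary observation: under the coupling \(\Phi(x,y)=r\phi(x-y)\), the \(\Phi\)-subgradient inequality \eqref{eq:subdiff} at \(\bar x\) says precisely that \(f\) is minorized by the ``\(\phi\)-shift'' \(x\mapsto f(\bar x)+r\phi(x-\bar y)-r\phi(\bar x-\bar y)\), which agrees with \(f\) at \(\bar x\); and since \(\phi\in\mathcal{C}^1(\R^n)\) by \cref{assum:a1,assum:a2}, this shift is differentiable, so a standard smooth-minorant argument (feed it into the \(\liminf\) definition of \(\widehat\OLDpartial f\)) places its gradient \(r\nabla\phi(\bar x-\bar y)\) inside \(\widehat\OLDpartial f(\bar x)\). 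To prove the first inclusion I would therefore fix \(\bar y\in\subdiff f(\bar x)\), set \(\bar v\coloneqq r\nabla\phi(\bar x-\bar y)\in\widehat\OLDpartial f(\bar x)\), and invert: using \(r^2=1\) and \((\nabla\phi)^{-1}=\nabla\phi^*\) one gets \(\bar y=\bar x-\nabla\phi^*(r\bar v)\in\bar x-\nabla\phi^*(r\widehat\OLDpartial f(\bar x))\).

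For the equivalence, I would first note that since \(\widehat\OLDpartial f(\bar x)\subseteq\OLDpartial f(\bar x)\) and \(w\mapsto\bar x-\nabla\phi^*(rw)\) is a bijection, the first inclusion already yields the unconditional inclusion \(\subdiff f(\bar x)\subseteq\bar x-\nabla\phi^*(r\OLDpartial f(\bar x))\); so the only real content of ``\ref{thm:phi_resolvent_inv:aniso_ineq}\(\Rightarrow\)\ref{thm:phi_resolvent_inv:subdiff}'' is the reverse inclusion. That follows by a change of variables: given \(\bar v\in\OLDpartial f(\bar x)\), put \(\bar y\coloneqq\bar x-\nabla\phi^*(r\bar v)\), so that \(x-\bar y=x-\bar x+\nabla\phi^*(r\bar v)\) and \(\bar x-\bar y=\nabla\phi^*(r\bar v)\); under this substitution the anisotropic inequality \eqref{eq:a-ineq} (cf.\ \cref{rem:a_smoothness_euclidean}) is literally the \(\Phi\)-subgradient inequality \eqref{eq:subdiff}, hence \(\bar y\in\subdiff f(\bar x)\). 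Running the same substitution backwards — taking an arbitrary \(\bar v\in\OLDpartial f(\bar x)\), invoking \ref{thm:phi_resolvent_inv:subdiff} to conclude \(\bar x-\nabla\phi^*(r\bar v)\in\subdiff f(\bar x)\), and unfolding \eqref{eq:subdiff} — proves ``\ref{thm:phi_resolvent_inv:subdiff}\(\Rightarrow\)\ref{thm:phi_resolvent_inv:aniso_ineq}''.

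For the closing claims I would reuse the smooth-minorant argument, now from the primal side. Under \ref{thm:phi_resolvent_inv:aniso_ineq}, for each \(\bar v\in\OLDpartial f(\bar x)\) the right-hand side of \eqref{eq:a-ineq} is a \(\mathcal{C}^1\) minorant of \(f\), equal to \(f(\bar x)\) at \(\bar x\), with gradient there equal to \(r\nabla\phi(\nabla\phi^*(r\bar v))=\bar v\); hence \(\bar v\in\widehat\OLDpartial f(\bar x)\), and combined with \(\widehat\OLDpartial f\subseteq\OLDpartial f\) this gives \(\OLDpartial f(\bar x)=\widehat\OLDpartial f(\bar x)\) (trivially so when \(\OLDpartial f(\bar x)=\emptyset\)). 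If \(f\) is \a-weakly or \a-strongly convex, then \ref{thm:phi_resolvent_inv:aniso_ineq} holds at every \(\bar x\in\dom\OLDpartial f\) and vacuously elsewhere, so \(\OLDpartial f=\widehat\OLDpartial f\) pointwise; and \ref{thm:phi_resolvent_inv:subdiff} then reads \(\subdiff f(\bar x)=\bar x-\nabla\phi^*(r\OLDpartial f(\bar x))\) for every \(\bar x\), which — \(\nabla\phi^*\) being a bijection — is nonempty exactly when \(\OLDpartial f(\bar x)\) is, so \(\dom\subdiff f=\dom\OLDpartial f=\dom\widehat\OLDpartial f\).

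I do not anticipate a genuine obstacle: once the bijective change of variables \(\bar y\leftrightarrow\bar x-\nabla\phi^*(r\bar v)\) is in place and the standard fact that a differentiable minorant touching the graph of \(f\) at \(\bar x\) has its gradient in \(\widehat\OLDpartial f(\bar x)\) is used, everything reduces to bookkeeping. The only point requiring a little care is the degenerate case \(\OLDpartial f(\bar x)=\emptyset\): there the first inclusion forces \(\subdiff f(\bar x)=\emptyset\) too, which is exactly what makes the final domain identities \(\dom\subdiff f=\dom\OLDpartial f=\dom\widehat\OLDpartial f\) hold on all of \(\R^n\) rather than merely where \(f\) is subdifferentiable.
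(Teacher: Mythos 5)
Your proposal is correct and follows essentially the same route as the paper: the first inclusion via Fermat's rule applied to the smooth perturbation $f-r\phi(\cdot-\bar y)$, and the equivalence via the bijective substitution $\bar y=\bar x-\nabla\phi^*(r\bar v)$ turning \eqref{eq:a-ineq} into the $\Phi$-subgradient inequality \eqref{eq:subdiff}. The only (immaterial) difference is in the identity $\OLDpartial f(\bar x)=\widehat\OLDpartial f(\bar x)$: you show directly that each $\bar v\in\OLDpartial f(\bar x)$ is a gradient of a touching $\mathcal C^1$ minorant, hence regular, whereas the paper sandwiches $\subdiff f(\bar x)$ between $\bar x-\nabla\phi^*(r\widehat\OLDpartial f(\bar x))$ and $\bar x-\nabla\phi^*(r\OLDpartial f(\bar x))$ and invokes injectivity of $\nabla\phi^*$.
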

\begin{proof}
	Let a pair $(\bar x,\bar y)\in\graph\subdiff f$ be fixed.
	In view of \cref{thm:phi_subgrad_existence} we know that $\bar x\in\argmin f-r\phi({}\cdot{}- \bar y)$, hence
	$
		0 \in\widehat\OLDpartial f(\bar x) - r\nabla \phi(\bar x - \bar y)
	$,
	and therefore $\bar y\in \bar x - \nabla \phi^* (r\widehat\OLDpartial f(\bar x)){}\subseteq \bar x - \nabla\phi^*(r\OLDpartial f(\bar v))$.
	In particular, if assertion \ref{thm:phi_resolvent_inv:subdiff} holds, then
		\[
			\bar x-\nabla\phi^*(r\OLDpartial f(\bar x))
		{}={}
			\subdiff f(\bar x)
		{}\subseteq{}
			\bar x-\nabla\phi^*(r\widehat\OLDpartial f(\bar x))
		{}\subseteq{}
			\bar x-\nabla\phi^*(r\OLDpartial f(\bar x)),
		\]
		which by the injectivity of \(\nabla\phi^*\) yields the claimed identity \(\widehat\OLDpartial f(\bar x)=\OLDpartial f(\bar x)\).
	In what follows, fix $(\bar x,\bar v)\in\graph\OLDpartial f$
	and let $\bar y\coloneqq\bar x - \nabla\phi^*(r\bar v)$.
	\begin{itemize}
	\item``\ref{thm:phi_resolvent_inv:aniso_ineq} \(\Rightarrow\) \ref{thm:phi_resolvent_inv:subdiff}''
		Substituting $\bar y$ into the anisotropic subgradient inequality \eqref{eq:a-ineq} gives
		$$
			f(x) \geq r\phi(x-\bar y) - r\phi(\bar x - \bar y) + f(\bar x)
		\quad
			\forall x\in\R^n,
		$$
		that is, $\bar y\in \subdiff f(\bar x)$. In combination with $\subdiff f(\bar x) \subseteq \bar x - \nabla \phi^* (r\OLDpartial f(\bar x))$ we obtain the desired result.
	\item``\ref{thm:phi_resolvent_inv:subdiff} \(\Rightarrow\) \ref{thm:phi_resolvent_inv:aniso_ineq}''
		By assumption, $\bar y \in \subdiff f(\bar x)$, and therefore
		$$
		f(x) \geq r\phi(x-\bar y) - r\phi(\bar x - \bar y) + f(\bar x)=r\phi(x-\bar x + \nabla\phi^*(r\bar v)) - r\phi(\nabla\phi^*(r\bar v)) + f(\bar x)
		$$
		follows from the $\Phi$-subgradient inequality \eqref{eq:subdiff}.
	\qedhere
	\end{itemize}
\end{proof}

The relation $\subdiff f = \id - \nabla\phi^*\circ r\OLDpartial f$ sets up a certain one-to-one correspondence between limiting subgradients $\bar v \in \OLDpartial f(\bar x)$ and $\Phi$-subgradients $\bar y = \bar x -\nabla \phi^*(r\bar v)$, and thus the anisotropic subgradient inequality can be understood in terms of a $\Phi$-subgradient inequality.
The next result shows that for $\Phi$-convex $f$ the $\Phi$-subdifferential is nonemp\-ty and compact at points at which $f$ is finite and strictly continuous.

\begin{proposition}\label{thm:phi_subdiff_nonempty}%
	Let $r\in\set{\pm1}$ and choose $\Phi(x,y)=r\phi(x-y)$.
	Let $\func{f}{\bR^n}{\exR}$ be $\Phi$-convex and strictly continuous at $\bar x \in \bR^n$, a point where $f$ is finite.
	Then $\subdiff f(\bar x)$ is nonempty and compact.
\end{proposition}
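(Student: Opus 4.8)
The plan is to identify $\subdiff f(\bar x)$ with the set of minimizers of a suitable auxiliary function and then invoke the standard attainment result for level-bounded lower semicontinuous functions. Since $f$ is $\Phi$-convex, \cref{thm:Phicvx} gives $f=\biconj f$; in particular $f(\bar x)=\sup_{y\in\R^n}\set{\Phi(\bar x,y)-\conj f(y)}$ is finite, equivalently the function $\func{\psi}{\R^n}{\exR}$ defined by $\psi\coloneqq\conj f-\Phi(\bar x,{}\cdot{})$ satisfies $\inf\psi=-f(\bar x)\in\R$. By the $\Phi$-Fenchel--Young inequality (\cref{thm:FYineq}) one has $\psi(y)\geq\Phi(\bar x,y)-f(\bar x)-\Phi(\bar x,y)=-f(\bar x)$ for every $y$, so $\argmin\psi=\set{y\in\R^n}[\psi(y)=-f(\bar x)]$. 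On the other hand, rearranging the $\Phi$-subgradient inequality \eqref{eq:subdiff} and taking the supremum over $x$ shows that $\bar y\in\subdiff f(\bar x)$ if and only if $\conj f(\bar y)\leq\Phi(\bar x,\bar y)-f(\bar x)$, that is, $\psi(\bar y)\leq-f(\bar x)$. Combining the last two facts yields $\argmin\psi=\subdiff f(\bar x)$, so it suffices to prove that $\psi$ is proper, lsc, and level-bounded.

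Properness and lower semicontinuity are immediate. Recalling that $r\in\set{\pm1}$, $\Phi(x,y)=r\phi(x-y)$, and that $\phi$ is finite-valued hence continuous on $\R^n$ by \cref{assum:a2}, we may write $\psi(y)=\sup_{x\in\R^n}\set{r\phi(x-y)-r\phi(\bar x-y)-f(x)}$, a pointwise supremum of functions that are continuous in $y$; hence $\psi$ is lsc. Moreover $\psi(y)\geq-f(\bar x)>-\infty$ for all $y$ (take $x=\bar x$ in the supremum) while $\inf\psi=-f(\bar x)<\infty$, so $\psi$ is proper.

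The crux is level-boundedness of $\psi$, and this is where strict continuity of $f$ at $\bar x$ meets super-coercivity of $\phi$ (\cref{assum:a3}). By strict continuity there are $\delta,C>0$ such that $f$ is finite on $\cball{\bar x}{\delta}$ with $f\leq C$ there. For any $y$ with $\|\bar x-y\|>\delta$ put $x_y\coloneqq\bar x+r\delta\tfrac{\bar x-y}{\|\bar x-y\|}\in\cball{\bar x}{\delta}$, so that $x_y-y=\lambda_y(\bar x-y)$ with $\lambda_y\coloneqq1+r\delta/\|\bar x-y\|$ — strictly larger than $1$ when $r=+1$ and in $(0,1)$ when $r=-1$. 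A direct convexity estimate for $\phi$ between the points $0$ and $\lambda_y(\bar x-y)$ gives, in either case, $r\bigl(\phi(\lambda_y(\bar x-y))-\phi(\bar x-y)\bigr)\geq|\lambda_y-1|\bigl(\phi(\bar x-y)-\phi(0)\bigr)$, whence, using $\conj f(y)\geq r\phi(x_y-y)-f(x_y)$,
\[
	\psi(y)
{}\geq{}
	r\phi(x_y-y)-r\phi(\bar x-y)-f(x_y)
{}\geq{}
	\frac{\delta}{\|\bar x-y\|}\bigl(\phi(\bar x-y)-\phi(0)\bigr)-C.
\]
Since $\phi$ is super-coercive, $\phi(\bar x-y)/\|\bar x-y\|\to\infty$ as $\|y\|\to\infty$, so the right-hand side tends to $+\infty$; hence $\psi$ is level-bounded. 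As $\psi$ is proper, lsc, and level-bounded, \cite[Theorem 1.9]{RoWe98} gives that $\argmin\psi=\subdiff f(\bar x)$ is nonempty and compact, which is the claim.

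The main obstacle is precisely this level-boundedness step: one must rule out maximizing sequences $y^k$ escaping to infinity, which I handle by trading the local boundedness of $f$ near $\bar x$ against the superlinear growth of $\phi$; choosing the probe point $x_y$ along the ray through $\bar x-y$ and $0$ is what makes the two cases $r=\pm1$ collapse into a single estimate. Everything else — the reduction to $\argmin\psi$ and the semicontinuity bookkeeping — is routine manipulation of the generalized-conjugacy identities of \cref{sec:convexity}.
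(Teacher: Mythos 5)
Your proof is correct, but it follows a genuinely different route from the paper's. The paper proves nonemptiness via Ekeland's variational principle: from the near-attainment of $\sup_y\Phi(\bar x,y)-\conj f(y)$ it extracts approximate minimizers $\bar x_\varepsilon$ of $f-r\phi(\cdot-\bar y_\varepsilon)$, obtains limiting subgradients $\bar v_\varepsilon\in\OLDpartial f(\bar x_\varepsilon)$ from the resulting stationarity condition, and uses local boundedness of $\OLDpartial f$ near $\bar x$ (which is where strict continuity enters, via \cite[Theorem 9.13]{RoWe98}) to pass to a limit $\bar y=\bar x-\nabla\phi^*(r\bar v)\in\subdiff f(\bar x)$; compactness is then argued separately, closedness from the argmin characterization of $\subdiff f(\bar x)$ and boundedness from the inclusion $\subdiff f(\bar x)\subseteq\bar x-\nabla\phi^*(r\OLDpartial f(\bar x))$ of \cref{thm:subset_phi_resolvent_inv}. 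You instead identify $\subdiff f(\bar x)$ with $\argmin\bigl(\conj f-\Phi(\bar x,\cdot)\bigr)$ and verify properness, lower semicontinuity, and level-boundedness of this auxiliary function, the last via a clean convexity estimate that plays the super-coercivity of $\phi$ against the boundedness of $f$ above on $\cball{\bar x}{\delta}$; Weierstrass (\cite[Theorem 1.9]{RoWe98}) then delivers nonemptiness and compactness in one stroke. Your argument is more elementary (no variational principle, no subdifferential calculus or the relation to $\OLDpartial f$) and in fact uses strictly less than the stated hypothesis: only local boundedness of $f$ from above near $\bar x$, not full strict continuity. What the paper's route buys in exchange is the explicit link between $\Phi$-subgradients and limiting subgradients through $\bar y=\bar x-\nabla\phi^*(r\bar v)$, which is the recurring mechanism of the surrounding section; your coercivity estimate, on the other hand, isolates exactly why the supremum defining $\biconj f(\bar x)$ is attained. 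All the individual steps check out, including the two-sided convexity inequality $r\bigl(\phi(\lambda_y(\bar x-y))-\phi(\bar x-y)\bigr)\geq|\lambda_y-1|\bigl(\phi(\bar x-y)-\phi(0)\bigr)$ for $\lambda_y=1+r\delta/\|\bar x-y\|$.
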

\begin{proof}
	Since $f$ is $\Phi$-convex, by \cref{thm:Phicvx} we have that $f(\bar x)=\sup_{y \in \bR^n} \Phi(\bar x, y) - \conj f(y)$. This means that for any \(\varepsilon>0\) there exists $\bar y_\varepsilon$ such that
		\begin{align}
		- \infty &< f(\bar x) - \varepsilon \leq \Phi(\bar x, \bar y_\varepsilon) - \conj f(\bar y_\varepsilon) \leq f(\bar x) < +\infty \label{eq:inequality_ekeland_sub}.
		\end{align}
		In addition, in view of \cref{thm:FYineq} for any $x\in \bR^n$ we have that
		\begin{equation} \label{eq:inequality_ekeland_sub2}
		f(x) \geq \Phi(x, \bar y_\varepsilon)- \conj f(\bar y_\varepsilon).
		\end{equation}
	Summing the inequalites \cref{eq:inequality_ekeland_sub,eq:inequality_ekeland_sub2} yields:
	\begin{equation}\label{eq:inequality_min}
		f(\bar x) -r\phi(\bar x-\bar y_\varepsilon)\leq f(x) -r\phi(x-\bar y_\varepsilon) + \varepsilon
	\quad
		\forall x \in \bR^n.
	\end{equation}
	Then, $\bar x \in \epsargmin_{x \in \bR^n} f(x) - r\phi(x-\bar y_\varepsilon)$.
	Ekeland's variational principle with $\delta\coloneqq \sqrt{\varepsilon}$, see \cite[Proposition 1.43]{RoWe98}, yields the existence of a point
	$\bar x_\varepsilon \in\cball{\bar x}{\sqrt\varepsilon}$ with $f(\bar x_\varepsilon) - r\phi(\bar x_\varepsilon-\bar y_\varepsilon) \leq f(\bar x) - r\phi(\bar x-\bar y_\varepsilon)$ and $\bar x_\varepsilon = \argmin f -r\phi(\cdot - \bar y_\varepsilon) + \sqrt{\varepsilon}\|\cdot - \bar x_\varepsilon\|$.
	Minimality of \(\bar x_\varepsilon\) implies
	$
		0 \in \OLDpartial f(\bar x_\varepsilon) - r\nabla \phi(\bar x_\varepsilon - \bar y_\varepsilon) + \cball{0}{\sqrt\varepsilon}
	$,
	and therefore
		\begin{equation}\label{eq:yeps}
			r\nabla \phi(\bar x_\varepsilon - \bar y_\varepsilon)- u_\varepsilon\eqqcolon \bar v_\varepsilon \in \OLDpartial f(\bar x_\varepsilon)
		\end{equation}
	holds for some $u_\varepsilon$ with $\|u_\varepsilon\| \leq \sqrt\varepsilon$.
	Since $f$, as a pointwise supremum over continuous functions, is lsc and by assumption strictly continuous at $\bar x$, in view of \cite[Theorem 9.13]{RoWe98}, $x \mapsto \OLDpartial f(x)$ is locally bounded at $\bar x$. Without loss of generality, up to extracting a subnet,
	$
		\bar v_\varepsilon \to \bar v
	$
	for some \(\bar v\in\R^n\) as $\varepsilon \searrow 0$. It follows from \eqref{eq:yeps} that $\bar y_\varepsilon =\bar x_\varepsilon - \nabla \phi^*(r\bar v_\varepsilon + r u_\varepsilon)$, and due to continuity of $\nabla \phi^*$ we obtain that $\bar y_\varepsilon \to \bar y {}\coloneqq{} \bar x - \nabla \phi^*(r \bar v)$. Passing to the limit in \eqref{eq:inequality_min} yields
	$$
		f(\bar x) +r\phi(x-\bar y)-r\phi(\bar x-\bar y)\leq f(x)
	\quad
		\forall x\in\bR^n,
	$$
	and therefore $\bar y \in \subdiff f(\bar x)$.
	
	In view of \cref{thm:phi_subgrad_existence}, since $f$ is $\Phi$-convex, we have $\bar y \in \subdiff f(\bar x) \Leftrightarrow \bar x \in \subdiff f^\Phi(\bar y) \Leftrightarrow \bar y \in \argmin\set*{\conj f-r\phi(\bar x- {}\cdot{})}$ and thus $\subdiff f(\bar x)=\argmin\set*{\conj f-r\phi(\bar x- {}\cdot{})}$. Since $\conj f$ as a pointwise supremum over continuous functions is lsc, $\subdiff f(\bar x)$ is closed.
	By \cref{thm:subset_phi_resolvent_inv}, $\subdiff f(\bar x) \subseteq \bar x - \nabla\phi^*(r\OLDpartial f(\bar x))$. Since \(f\) is strictly continuous, $\OLDpartial f(\bar x)$ is bounded, and thus so is \(\nabla\phi^*(r\OLDpartial f(\bar x))\) owing to continuity of $\nabla\phi^*$. As a result, \(\subdiff f(\bar x)\) too is bounded.
\end{proof}

	\section{Anisotropic and Bregman conjugate dualities}\label{sec:duality}%
		\subsection{\B-smooth and \a*-strongly convex conjugate duality}%
In this section we will study the conjugate duality between \a-strong convexity as in \cref{def:a-strong} and \B*-smoothness for a convex function $f\in\Gamma_0(\bR^n)$.
The main result of this section is \cref{thm:duality_aniso_str_cvx}.
To begin with we show that essential strict convexity is implied by \a-strong convexity.

\begin{proposition}\label{thm:strictly_convex}%
	Let $f :\bR^n \to \exR$ be \a-strongly convex.
	Then $f$ is essentially strictly convex.
\end{proposition}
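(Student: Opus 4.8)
The plan is to argue by contraposition: from a failure of essential strict convexity I will extract a segment on which $f$ is affine with endpoints in $\dom\OLDpartial f$, and then play the anisotropic strong convexity inequality against the \emph{strict} convexity of the reference function $\phi$. Two preliminary observations are in order. By \cref{thm:lem_astrongconvexity_impl_convexity}, \a-strong convexity already entails $f\in\Gamma_0(\R^n)$. Moreover, since $\phi$ is Legendre with $\dom\phi=\R^n$ (\cref{assum:a1,assum:a2}), it is differentiable on all of $\R^n$, so $\dom\OLDpartial\phi=\R^n$ and essential strict convexity of $\phi$ says precisely that $\phi$ is strictly convex on $\R^n$; equivalently, $\phi(y)>\phi(w)+\innprod{\nabla\phi(w)}{y-w}$ whenever $y\neq w$. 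Recall also that $\nabla\phi$ and $\nabla\phi^*$ are mutually inverse.

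Suppose now that $f$ is \emph{not} essentially strictly convex. Then there is a convex set $C\subseteq\dom\OLDpartial f$ on which $f$ fails to be strictly convex, i.e.\ distinct $x_0,x_1\in C$ and $\lambda\in(0,1)$ with $f(\lambda x_0+(1-\lambda)x_1)=\lambda f(x_0)+(1-\lambda)f(x_1)$; by convexity of $f$ this forces $f$ to be affine on the whole segment $[x_0,x_1]$, and since $C$ is convex we have $[x_0,x_1]\subseteq C\subseteq\dom\OLDpartial f$. Fix any $t\in(0,1)$, set $x_t\coloneqq(1-t)x_0+tx_1$, and pick $\bar v\in\OLDpartial f(x_t)$. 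Writing the subgradient inequality at $x_t$ against $y=x_0$ and against $y=x_1$, and inserting the affine identity $f(x_t)=(1-t)f(x_0)+tf(x_1)$, one obtains after dividing out $t$ and $1-t$ the equality $\innprod{\bar v}{x_1-x_0}=f(x_1)-f(x_0)$; feeding this back into the $y=x_0$ inequality shows that it holds with equality, which rearranges to $\bar v\in\OLDpartial f(x_0)$.

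Finally, apply the anisotropic strong convexity inequality \eqref{eq:a-strong} at the pair $(x_0,\bar v)\in\graph\OLDpartial f$ and set $w\coloneqq\nabla\phi^*(\bar v)$, so that $\nabla\phi(w)=\bar v$. Evaluating at $x=x_1$ gives
\[
	f(x_1)-f(x_0)\;\ge\;\phi\bigl(x_1-x_0+w\bigr)-\phi(w).
\]
Using $\innprod{\bar v}{x_1-x_0}=f(x_1)-f(x_0)$ together with $\bar v=\nabla\phi(w)$, the left-hand side equals $\innprod{\nabla\phi(w)}{(x_1-x_0+w)-w}$, so the displayed inequality is exactly the reverse of the gradient inequality for $\phi$ at the points $w$ and $x_1-x_0+w$. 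Since $x_0\neq x_1$, these two points are distinct, and strict convexity of $\phi$ rules out such a reversal; this contradiction shows that $f$ must be essentially strictly convex.

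I expect the only genuinely delicate point to be the middle step: one must notice that the failure of strict convexity is witnessed \emph{inside} a convex subset of $\dom\OLDpartial f$, so that the whole affine segment lies in $\dom\OLDpartial f$ — this is what makes it legitimate to take a subgradient at an interior point $x_t$ of the segment and thereby obtain a subgradient at $x_0$ that is exactly aligned with the direction $x_1-x_0$. That alignment is precisely what collapses the anisotropic inequality onto the equality case of the gradient inequality for $\phi$, where strict convexity bites; without it (for an arbitrarily chosen $\bar v\in\OLDpartial f(x_0)$) the argument only reproduces a consistent, non-strict inequality.
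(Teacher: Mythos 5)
Your proof is correct, but it takes a genuinely different route from the paper's. The paper argues directly: for distinct $x,x'$ in a convex set $K\subseteq\dom\OLDpartial f$ and $\tau\in(0,1)$, it uses \cref{thm:subset_phi_resolvent_inv} to produce a $\Phi$-subgradient $y_\tau\in\subdiff f(x_\tau)$ at the convex combination $x_\tau$, so that the strictly convex function $\phi(\cdot-y_\tau)-\conj f(y_\tau)$ minorizes $f$ (on $\dom\subdiff f$, where $f=\biconj f$) and touches it at $x_\tau$; the strict Jensen inequality for this minorant evaluated at $x$ and $x'$ then gives $f(x_\tau)<\tau f(x)+(1-\tau)f(x')$ in one stroke. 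You instead argue by contraposition, using only classical convex analysis to extract from an affine segment $[x_0,x_1]\subseteq\dom\OLDpartial f$ a subgradient $\bar v\in\OLDpartial f(x_0)$ aligned with the segment, and then play the defining inequality \eqref{eq:a-strong} against the strict gradient inequality of $\phi$ at the distinct points $w=\nabla\phi^*(\bar v)$ and $x_1-x_0+w$. Both arguments rest on the same geometric fact --- a strictly convex supporting function of the form $\phi(\cdot-\bar y)-\bar\beta$ cannot support $f$ along a nondegenerate affine piece of its graph --- but they access that supporting function differently: the paper through the $\Phi$-subdifferential machinery of \cref{thm:phi_subgrad_existence,thm:subset_phi_resolvent_inv}, you through the classical subdifferential at an interior point of the segment followed by a single application of \eqref{eq:a-strong}. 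Your version is more elementary and self-contained, needing only \cref{thm:lem_astrongconvexity_impl_convexity} and the definition; the paper's is shorter given the $\Phi$-convexity toolkit already in place and avoids the contrapositive. Your closing remark is exactly right: the alignment $\innprod{\bar v}{x_1-x_0}=f(x_1)-f(x_0)$, obtained by taking the subgradient at an interior point of the segment rather than at an endpoint, is what collapses the argument onto the equality case of the gradient inequality for $\phi$, where strict convexity produces the contradiction.
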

\begin{proof}
	Invoking \cref{thm:lem_astrongconvexity_impl_convexity} we know that $f$ is convex, proper lsc and in particular has a convex domain.
	Fix a nonempty convex set $K\subseteq \dom \OLDpartial f$. To avoid trivialities, we may assume that $K$ is not a singleton. Let $x,x' \in K$ with $x\neq x'$ be fixed. Choose $\tau \in (0,1)$. Since $K$ is convex, $x_\tau\coloneqq\tau x + (1-\tau) x' \in K$. By \a-strong convexity of $f$ and \cref{thm:subset_phi_resolvent_inv} 
		\begin{align} \label{eq:essential_strict_conv_a-strong_lemma_incl}
		\dom\subdiff f =  \dom \OLDpartial f \supseteq K,
		\end{align}
		and thus there exists $y_\tau \in \subdiff f(x_\tau)$. \cref{thm:phi_subgrad_existence} implies $f(x_\tau) + \conj f(y_\tau) = \phi(x_\tau - y_
		\tau)$.
	Since $\phi$ is strictly convex and thus so is $\phi(\cdot - y_\tau) - \conj f(y_\tau)$, we have
	\begin{align*}
		f(\tau x + (1-\tau) x')
	{}={} &
		\phi(\tau x + (1-\tau) x' - y_\tau) - \conj f(y_\tau)
	\\
	{}<{} &
		\tau (\phi(x- y_\tau) - \conj f(y_\tau)) + (1-\tau) (\phi(x'- y_\tau) - \conj f(y_\tau))
	\\
	{}\leq{} &
		\tau\sup_{y \in \bR^n} \{\phi(x-y) - \conj f(y)\} + (1-\tau) \sup_{y \in \bR^n} \{\phi(x'-y) - \conj f(y)\}
	\\
	{}={} &
		\tau f(x) + (1-\tau) f(x'),
	\end{align*}
	where the last equality follows by definition of $\biconj f$ and the fact that $f\equiv\biconj f$ on $\dom\subdiff f\supseteq K$, see \cref{thm:phi_subgrad_existence} and \cref{eq:essential_strict_conv_a-strong_lemma_incl}. This proves that $f$ is essentially strictly convex.
\end{proof}


We state Pshenichnyi's and Hiriart-Urruty's formula \cite{pshenichnyi1971controle,hiriart1986general} adopted from \cite[Theorem 7.1]{cabot2017envelopes}; for completeness we provide a short proof adapted from that in the given reference. We remind that \(h_-\coloneqq h(-{}\cdot{})\), and that \((h_-)^*=(h^*)_-\eqqcolon h_-^*\).
\begin{lemma}\label{thm:hiriart_urruty}
	Let $\func{g}{\bR^n}{\exR}$ and $h \in \Gamma_0(\bR^n)$. Then the following identity holds
	\begin{equation}
		\sup_{y \in \bR^n} h(x-y) - g_-(y) = (h^* - g^*)^*(x),
	\end{equation}
	where we adopt extended arithmetics {\rm\cite{moreau1966fonctionnelles}}, if necessary.
\end{lemma}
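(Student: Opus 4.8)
The plan is to verify the identity by unfolding both sides into explicit suprema and matching them, the only genuine input being the Fenchel--Moreau relation \(h^{**}=h\), available since \(h\in\Gamma_0(\R^n)\). On the left, a change of variables turns the expression into a shifted biconjugate of \(h\); on the right, applying the definition of the convex conjugate twice, together with an interchange of suprema, produces the same object. I expect the algebra to be short; the subtlety lies entirely in handling infinite values.

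Concretely, I would first rewrite the left-hand side: with \(g_-(y)=g(-y)\), substituting \(z=-y\) gives
\[
	\sup_{y\in\R^n}h(x-y)-g_-(y)
	{}={}
	\sup_{z\in\R^n}h(x+z)-g(z).
\]
Next, expand the right-hand side by the definition of the conjugate,
\[
	(h^*-g^*)^*(x)
	{}={}
	\sup_{w\in\R^n}\innprod{x}{w}-h^*(w)+g^*(w),
\]
and insert \(g^*(w)=\sup_{z\in\R^n}\innprod{z}{w}-g(z)\). Since a supremum of suprema is a joint supremum, this equals
\[
	\sup_{w,z\in\R^n}\innprod{x+z}{w}-h^*(w)-g(z)
	{}={}
	\sup_{z\in\R^n}\Bigl(\sup_{w\in\R^n}\innprod{x+z}{w}-h^*(w)\Bigr)-g(z)
	{}={}
	\sup_{z\in\R^n}h^{**}(x+z)-g(z),
\]
and the inner supremum is exactly \(h^{**}(x+z)=h(x+z)\) because \(h\in\Gamma_0(\R^n)\). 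Comparing with the rewritten left-hand side finishes the argument. (Alternatively, one can skip the change of variables and write \(g^*(w)=(g_-)^*(-w)\) using \eqref{eq:conjugate_minus}, expanding directly to \(\sup_y h(x-y)-g_-(y)\) without relabelling.)

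The only delicate point, and the part requiring the most care, is the bookkeeping of infinite values. Since \(h\in\Gamma_0(\R^n)\), its conjugate \(h^*\) is proper, so \(h^*>-\infty\) everywhere and the difference \(h^*-g^*\) never produces a \((-\infty)-(-\infty)\) form; the remaining indeterminate case \((+\infty)-(+\infty)\) occurs only at points lying outside both \(\dom h^*\) and \(\dom g^*\), where one checks that both the conjugate form and the expanded double supremum contribute \(-\infty\), consistently with the Moreau extended-arithmetic convention \cite{moreau1966fonctionnelles}. Finally, the degenerate cases \(g\equiv+\infty\) (both sides \(\equiv-\infty\)) and \(w\notin\dom h^*\) (the corresponding term is \(-\infty\) on either side) are verified directly, so that the interchanges of suprema and the use of \(h^{**}=h\) are legitimate throughout.
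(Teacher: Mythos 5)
Your proof is correct and takes essentially the same route as the paper's: both arguments hinge on replacing $h$ by its biconjugate $h=h^{**}$ (valid since $h\in\Gamma_0(\R^n)$), interchanging the order of the two suprema, and recognizing the definition of $g^*$ in the inner supremum. You simply run the computation from the right-hand side toward the left instead of the reverse, and your explicit bookkeeping of the extended-arithmetic edge cases fills in what the paper only gestures at with ``if necessary.''
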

\begin{proof}
	Since $h \in \Gamma_0(\bR^n)$ we have that
	$$
	h(x-y)=h^{**}(x-y)=\sup_{v \in \bR^n} \innprod{x-y}{v} - h^*(v).
	$$
	Thus we obtain by an exchange of the order of maximization, see \cite[Proposition 1.35]{RoWe98}
	\begin{align*}
		\sup_{y \in \bR^n} h(x-y) - g(-y)
	{}={} &
		\sup_{y \in \bR^n} \sup_{v \in \bR^n} \innprod{x-y}{v} - h^*(v) - g(-y)
	\\
	{}={} &
		\sup_{v \in \bR^n} \innprod{x}{v} - h^*(v) + \sup_{y \in \bR^n} \innprod{-y}{-v} - g(y)
	\\
	{}={} &
		\sup_{v \in \bR^n} \innprod{x}{v} - h^*(v) + g^*(v)
	\end{align*}
	which equals \((h^* - g^*)^*(x)\).
\end{proof}

Next we show the main result of this section, the anticipated duality between the classes of \a-strongly convex and \B*-smooth functions.
The inclusion of the former in the superclass of \(\Phi\)-convex functions for \(\Phi(x,y)=\phi(x-y)\) will also be demonstrated.
It will become apparent that the culprit of the properness of such inclusion is to be found in a certain saddle-point property.
Further investigation on this property will be developed in \cref{sec:gap:a-strong}, where special cases guaranteeing its validity will be presented, and its sharpness in more general contexts ultimately showcased with \cref{ex:counter_rel_str_cvx}.

\begin{theorem}[\a-strong and \B*-smooth duality]\label{thm:duality_aniso_str_cvx}%
	Let $\Phi(x,y)\coloneqq \phi(x-y)$.
	For any proper $f:\bR^n \to \exR$, the following conditions are equivalent:
	\begin{enumerateq}
	\item\label{thm:anisoDuality:a-strong}%
		$f$ is \a-strongly convex;
	\item\label{thm:anisoDuality:B*-smooth}
		$f$ is lsc and convex and $f^*$ is \B*-smooth (i.e., $\phi^* - f^*\in \Gamma_0(\R^n)$);
	\item\label{thm:anisoDuality:minmax}%
		$f$ is $\Phi$-convex (equivalently, $f=(-h)\supconv\phi$ for some $h\in\Gamma_0(\R^n)$), and for all $\bar v \in \bR^n$ the following saddle-point property holds:
		\begin{equation}\label{eq:strong_duality_aniso_str_cvx}
			\sup_{x \in \bR^n} \inf*_{y\in \bR^n} \innprod{x}{\bar v} - \Phi(x,y) + \conj f(y) = \inf*_{y\in \bR^n} \sup_{x \in \bR^n} \innprod{x}{\bar v} - \Phi(x,y) + \conj f(y).
		\end{equation}
	\end{enumerateq}
	If, in addition, $f$ is essentially smooth, the saddle-point property in {\rm\ref{thm:anisoDuality:minmax}} is superfluous.
\end{theorem}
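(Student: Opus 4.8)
\emph{The plan.} I would organise the three conditions around the \emph{difference function} $\psi\coloneqq\phi^*-f^*$. The structural remark making this work is that, \emph{given} $f\in\Gamma_0(\R^n)$, condition~\ref{thm:anisoDuality:B*-smooth} is precisely ``$\psi\in\Gamma_0(\R^n)$'': since $f^*\in\Gamma_0(\R^n)$ and $\dom\phi^*=\R^n$, the sum $f^*+\phi^*$ lies in $\Gamma_0(\R^n)$ automatically, so by \cref{defin:B-,thm:B-weak} read for the reference function $\phi^*$ the only nontrivial part of \B*-smoothness of $f^*$ is that $\phi^*-f^*$ be convex and proper; moreover ``$\psi\in\Gamma_0(\R^n)$'' already forces $\dom f^*=\R^n$ (otherwise $\psi\equiv-\infty$ off $\dom f^*$ and is improper), whence $f^*\in\mathcal C^1(\R^n)$ by the last claim of \cref{thm:B-weak}. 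The same regularity is available under~\ref{thm:anisoDuality:a-strong}: \cref{thm:lem_astrongconvexity_impl_convexity,thm:strictly_convex} give $f\in\Gamma_0(\R^n)$ essentially strictly convex (hence $f^*$ essentially smooth, \cite[Section~26]{Roc70}), and \eqref{eq:a-strong} together with \cref{assum:a3} makes $f$ super-coercive, so again $\dom f^*=\R^n$ and $f^*\in\mathcal C^1(\R^n)$. Finally, \cref{thm:hiriart_urruty} combined with \eqref{eq:conjugate_minus} (and \cref{rem:supconv}) yields, for any $f\in\Gamma_0(\R^n)$, the closed forms $\conj f=\psi^*_-$ and $\biconj f=(\phi^*-\psi^{**})^*$.

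\emph{Step 1: \ref{thm:anisoDuality:a-strong}$\Leftrightarrow$\ref{thm:anisoDuality:B*-smooth}.} Both directions would reduce to one computation. Fix $(\bar x,\bar v)\in\graph\OLDpartial f$, so $\bar x=\nabla f^*(\bar v)$, and set $b\coloneqq\nabla\phi^*(\bar v)-\bar x$. Substituting the Fenchel equalities $f(\bar x)=\innprod{\bar x}{\bar v}-f^*(\bar v)$ and $\phi(\nabla\phi^*(\bar v))=\innprod{\nabla\phi^*(\bar v)}{\bar v}-\phi^*(\bar v)$ into \eqref{eq:a-strong}, writing $\phi(x-\bar x+\nabla\phi^*(\bar v))=\sup_v\set{\innprod{x+b}{v}-\phi^*(v)}$, and taking $\inf_x$ (which produces $-f^*(v)=\psi(v)-\phi^*(v)$), one finds that \eqref{eq:a-strong} holds at $(\bar x,\bar v)$ \emph{iff} $\psi(v)\geq\psi(\bar v)+\innprod{b}{v-\bar v}$ for all $v$, i.e.\ $b\in\OLDpartial\psi(\bar v)$. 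If $\psi\in\Gamma_0(\R^n)$ then, as $\psi=\phi^*-f^*$ is differentiable with $\nabla\psi=\nabla\phi^*-\nabla f^*$ and convex, $\OLDpartial\psi(\bar v)=\set{b}$, so \eqref{eq:a-strong} holds throughout $\graph\OLDpartial f$ and the proper lsc function $f$ is \a-strongly convex. Conversely, if $f$ is \a-strongly convex then \eqref{eq:a-strong} holds at $(\nabla f^*(\bar v),\bar v)$ for every $\bar v\in\R^n$, so $\psi$ is finite-valued and admits a subgradient at every point, hence is a supremum of affine functions, hence $\psi\in\Gamma_0(\R^n)$.

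\emph{Step 2: \ref{thm:anisoDuality:B*-smooth}$\Leftrightarrow$\ref{thm:anisoDuality:minmax}, and the addendum.} Evaluating the inner $\inf_y$ in \eqref{eq:strong_duality_aniso_str_cvx} (which gives $\innprod{x}{\bar v}-\biconj f(x)$) and the inner $\sup_x$ (which gives $\phi^*(\bar v)+\innprod{y}{\bar v}+\conj f(y)$), the saddle-point condition becomes $(\biconj f)^*(\bar v)=\phi^*(\bar v)-(\conj f)^*(-\bar v)$ for all $\bar v$; inserting the closed forms above and using \eqref{eq:conjugate_minus}, this is exactly $(\phi^*-\psi^{**})^{**}=\phi^*-\psi^{**}$, i.e.\ $\phi^*-\psi^{**}$ coincides with its biconjugate. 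Meanwhile, by \cref{thm:Phicvx,thm:bconj:conv}, ``$f$ is $\Phi$-convex'' means $f=\biconj f=(\phi^*-\psi^{**})^*$ (equivalently $f=(-\conj f)\supconv\phi$ with $\conj f\in\Gamma_0(\R^n)$, by \cref{rem:supconv}). If~\ref{thm:anisoDuality:B*-smooth} holds, then $\psi=\psi^{**}$, so $\phi^*-\psi^{**}=f^*\in\Gamma_0(\R^n)$ (so the saddle-point condition holds) and $\biconj f=(f^*)^*=f$, giving~\ref{thm:anisoDuality:minmax}. If~\ref{thm:anisoDuality:minmax} holds, then $f^*=(\phi^*-\psi^{**})^{**}$ by $\Phi$-convexity and $f^*=\phi^*-\psi^{**}$ by the saddle-point condition, hence $\psi^{**}=\phi^*-f^*=\psi$, which together with $\dom f^*=\R^n$ (otherwise $\psi$ is improper and $\psi^{**}\equiv-\infty$) puts $\psi$ in $\Gamma_0(\R^n)$, i.e.~\ref{thm:anisoDuality:B*-smooth}. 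For the addendum, let $f$ be $\Phi$-convex and essentially smooth; then $\dom\OLDpartial f=\interior\dom f$, and at any $\bar x\in\interior\dom f$ the convex function $f$ is strictly continuous, so \cref{thm:phi_subdiff_nonempty} gives $\subdiff f(\bar x)\neq\emptyset$ while \cref{thm:subset_phi_resolvent_inv} gives $\subdiff f(\bar x)\subseteq\set{\bar x-\nabla\phi^*(\nabla f(\bar x))}$; unfolding this single $\Phi$-subgradient via \eqref{eq:subdiff} is exactly \eqref{eq:a-strong} at $\bar x$ with $\bar v=\nabla f(\bar x)$. Hence \eqref{eq:a-strong} holds throughout $\graph\OLDpartial f$, i.e.~\ref{thm:anisoDuality:a-strong} holds and, by Steps 1--2, so does the full~\ref{thm:anisoDuality:minmax}, so the saddle-point property is redundant in this case.

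\emph{Main obstacle.} The Fenchel-chase equivalence of Step~1 is robust, but it requires $\OLDpartial f^*(\bar v)$ to be single-valued (so that $b$ is pinned down by $\bar v$) and $\psi$ to be real-valued where the subgradient inequality is invoked; the fiddly part is therefore to \emph{derive} ``$\dom f^*=\R^n$ and $f^*\in\mathcal C^1(\R^n)$'' from each hypothesis rather than assume it. The genuinely conceptual point --- and the reason~\ref{thm:anisoDuality:minmax} is the correct bridging class --- is that $\Phi$-convexity alone only delivers $f=(\phi^*-\psi^{**})^*$, i.e.\ $f^*=(\phi^*-\psi^{**})^{**}$, whereas~\ref{thm:anisoDuality:B*-smooth} demands the stronger $\phi^*-f^*=\psi\in\Gamma_0(\R^n)$; the saddle-point property \eqref{eq:strong_duality_aniso_str_cvx} is precisely the extra closedness of $\phi^*-\psi^{**}$ that closes this gap, and the fact that it is \emph{not} automatic (only under, e.g., essential smoothness of $f$) is what \cref{ex:counter_rel_str_cvx} will later exhibit.
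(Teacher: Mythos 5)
Your proposal is correct, and its overall architecture coincides with the paper's: the chain \ref{thm:anisoDuality:a-strong}$\Leftrightarrow$\ref{thm:anisoDuality:B*-smooth}$\Leftrightarrow$\ref{thm:anisoDuality:minmax}, the use of \cref{thm:lem_astrongconvexity_impl_convexity,thm:strictly_convex} and super-coercivity to secure $\dom f^*=\R^n$ and smoothness of $f^*$, the reduction of the saddle-point identity \eqref{eq:strong_duality_aniso_str_cvx} to $(\biconj f)^*=\phi^*-(\conj f)_-^*$ via \cref{thm:hiriart_urruty}, and the addendum via \cref{thm:phi_subdiff_nonempty,thm:subset_phi_resolvent_inv} are all the same. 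Where you genuinely depart from the paper is in the engine of Step~1: the paper proves \ref{thm:anisoDuality:a-strong}$\Rightarrow$\ref{thm:anisoDuality:B*-smooth} by passing through the $\Phi$-subgradient $\bar y=\bar x-\nabla\phi^*(\bar v)$, \cref{thm:phi_subgrad_existence}, and an interchange of suprema to identify $\phi^*-f^*$ with the conjugate $(\conj f)_-^*$ on $\ran\OLDpartial f=\R^n$, and proves the converse separately by verifying attainment in the supremal-convolution representation of $f$; you instead dualize the inequality \eqref{eq:a-strong} once and for all into the statement that $\psi=\phi^*-f^*$ admits the global affine minorant with slope $b=\nabla\phi^*(\bar v)-\bar x$ touching at $\bar v$, which makes both directions fall out of a single computation (convexity of $\psi$ as a supremum of its supporting affine functions in one direction; uniqueness of the support slope $\nabla\psi(\bar v)$ for a differentiable convex $\psi$ in the other). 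This is a clean and slightly more economical packaging; the one point to phrase carefully is that in the forward direction ``$b\in\OLDpartial\psi(\bar v)$'' must be read as the \emph{global} affine-minorization property (convexity of $\psi$ is not yet known there), which is exactly how you use it. Your closed forms $\conj f=\psi_-^*$ and $\biconj f=(\phi^*-\psi^{**})^*$ check out against \cref{thm:hiriart_urruty} and \eqref{eq:conjugate_minus}, and your treatment of properness of $\psi$ in \ref{thm:anisoDuality:minmax}$\Rightarrow$\ref{thm:anisoDuality:B*-smooth} (ruling out $\psi^{**}\equiv-\infty$ via properness of $f^*$) supplies the same bookkeeping the paper handles through properness of $h=\conj f$.
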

\begin{proof}~
	\begin{itemize}
	\item``\ref{thm:anisoDuality:a-strong} \(\Rightarrow\) \ref{thm:anisoDuality:B*-smooth}'' Let $f$ be \a-strongly convex. In view of \cref{thm:lem_astrongconvexity_impl_convexity} $f \in \Gamma_0(\bR^n)$. Fix $(\bar x,\bar v)\in\graph\OLDpartial f$ and define $\bar y\coloneqq\bar x -\nabla\phi^*(\bar v) \in\subdiff f(\bar x)$, where the inclusion holds by \cref{thm:subset_phi_resolvent_inv}. Invoking \cref{thm:phi_subgrad_existence} we have $f(\bar x) +\conj f(\bar y) = \phi(\bar x-\bar y)$ and $\bar x \in \subdiff \conj f(\bar y)$, where the latter means by definition that $\bar y \in \argmax \phi(\bar x - \cdot) - \conj f$. Combined, these yield
		\begin{equation}\label{eq:supremum_y}
			\sup_{y \in \bR^n}{}\phi(\bar x-y)-\conj f(y)
			{}={}
			\phi(\bar x-\bar y)-\conj f(\bar y)
			{}=f(\bar x).
		\end{equation}
			We have
			\begin{align*}
			\phi(\bar x-y)=\phi^{**}(\bar x-y)=\sup_{v \in \bR^n} \innprod{\bar x-y}{v} - \phi^*(v).
			\end{align*}
			Define $q(y):=\sup_{v \in \bR^n} h(y,v)$ for
			\begin{align*}
			h(y, v):= \innprod{\bar x-y}{v} - \phi^*(v) - \conj f(y).
			\end{align*}
			Then we can rewrite the supremum in \cref{eq:supremum_y} in terms of the joint supremum 
			\begin{align} \label{eq:supremum_y2}
			f(\bar x)=\sup_{y \in \bR^n} q(y),
			\end{align}
			where $\bar y \in \argmax q$. Since $\bar y=\bar x -\nabla\phi^*(\bar v)$ we have that $\bar v=\nabla \phi(\bar x-\bar y)$. Using the fact that $\bar v\in \OLDpartial \phi(\bar x-\bar y) \Leftrightarrow \bar x-\bar y \in \OLDpartial \phi^*(\bar v)$, this means that $\bar v \in \argmax \innprod{\bar x-\bar y}{\cdot} - \phi^*$ showing that $\bar v \in \argmax h(\bar y, \cdot)$. Define $p(v):=\sup_{y \in \bR^n} h(y,v)$. Then \cite[Proposition 1.35]{RoWe98} yields that $(\bar y, \bar v) \in \argmax h$ as well as 
			\begin{align} \label{eq:attainment_v}
			\bar v \in \argmax p.
			\end{align}
			Overall this yields
			\begin{align*}
				f(\bar x) &\overrel*[=]{\eqref{eq:supremum_y2}} \sup_{y \in \bR^n} q(y) = \sup_{v \in \bR^n} p(v) \overrel*[=]{\eqref{eq:attainment_v}} p(\bar v) \\
				&\defeq \innprod{\bar x}{\bar v} - \phi^*(\bar v) +\sup_{y \in \bR^n} \innprod{y}{-\bar v} - \conj f(y) \defeq \innprod{\bar x}{\bar v} - \phi^*(\bar v) + (\conj f)^*(-\bar v),
			\end{align*}
			which combined with the fact that $\bar v \in \OLDpartial f(\bar x)$ results in
			\[
				f^*(\bar v)
			{}={}
				\innprod{\bar x}{\bar v} -f(\bar x)
			{}={}
				\phi^*(\bar v) - (\conj f)_-^*(\bar v) \in \bR.
			\]%
		Since $f\geq \phi({}\cdot{}-\bar y)-\conj f(\bar y)$ holds by \cref{thm:FYineq}, and since $\phi$ is super-coercive, $f$ too is super-coercive and as such $\dom f^*=\bR^n = \dom\OLDpartial f^*$ in view of \cite[Proposition 2.16]{bauschke1997legendre}.
		Since $\phi^*(\bar v) -f^*(\bar v) = (\conj f)_-^*(\bar v)\in \bR$ holds for all $\bar v \in \ran \OLDpartial f = \dom \OLDpartial f^*=\bR^n$, we have that $\phi^*-f^*$ is  finite-valued, convex and lsc, and thus $\phi^* - f^*\in \Gamma_0(\R^n)$ as claimed.

	\item``\ref{thm:anisoDuality:B*-smooth} \(\Rightarrow\) \ref{thm:anisoDuality:a-strong}''
		By assumption,
			\begin{align} \label{eq:def_g}
			g\coloneqq\phi^* - f^* \in \Gamma_0(\bR^n).
			\end{align}
			It follows from \cref{thm:B-weak} that $f^* \in \mathcal{C}^1(\R^n)$, and as such so is $g$.
		Fix $(\bar x,\bar v)\in\graph\OLDpartial f$ and let $\bar y\coloneqq\bar x-\nabla\phi^*(\bar v)$, so that $\nabla \phi(\bar x-\bar y) = \bar v\in \OLDpartial f(\bar x)$.
		Since $f \in \Gamma_0(\bR^n)$ by assumption, by using the fact that $\bar v \in \OLDpartial f(\bar x) \Leftrightarrow \bar x \in \OLDpartial f^*(\bar v)$ and the smoothness of $f^*$, we obtain
			\begin{align} \label{eq:incl_dual_f}
			\bar x =\nabla f^*(\bar v) = \nabla f^*(\nabla \phi(\bar x-\bar y)).
			\end{align}
		We have
		\begin{align*}
			f(x)
		{}={}
			f^{**}(x)
		{}={}
			(\phi^* - g)^*(x)
		{}={} &
			\sup_{y \in \bR^n} \phi(x-y) - g_-^*(y)
		\\
		\numberthis\label{eq:fsupconv}
		{}\geq{} &
			\phi(x- \bar y) - g_-^*(\bar y),
		\end{align*}
		where the last equality follows from \cref{thm:hiriart_urruty}.
		Smoothness of \(g\) yields
		\begin{align*}
			\bar x \overrel*{\eqref{eq:incl_dual_f}} \nabla f^*(\nabla \phi(\bar x-\bar y))
			& \overrel*{\eqref{eq:def_g}}
			\nabla (\phi^* -g)(\nabla \phi(\bar x -\bar y))
		\\
			&= \nabla\phi^*(\nabla\phi(\bar x -\bar y)) - \nabla g(\nabla \phi(\bar x -\bar y)) \\
			&=\bar x-\bar y - \nabla g(\nabla \phi(\bar x -\bar y)),
		\end{align*}
		and therefore
		\begin{align} \label{eq:subgradient_g}
			\nabla g(\nabla \phi(\bar x-\bar y)) = -\bar y.
		\end{align}
		We thus have
		\begin{align*}
			f(\bar x)
		{}\overrel*{\eqref{eq:incl_dual_f}}{} &
			\innprod{\nabla\phi(\bar x -\bar y)}{\bar x} - f^*(\nabla \phi(\bar x-\bar y))
		\\
		{}\overrel*{\eqref{eq:def_g}} {} &
			\innprod{\nabla\phi(\bar x-\bar y)}{\bar x} - \phi^*(\nabla \phi(\bar x-\bar y)) + g(\nabla \phi(\bar x-\bar y))
		\\
		{}={} &
			\innprod{\nabla \phi(\bar x-\bar y)}{\bar x-\bar y}
			{}-{}
			\phi^*(\nabla \phi(\bar x-\bar y))
			{}-{}
			\bigl(
				\innprod{\nabla \phi(\bar x-\bar y)}{-\bar y}
				{}-{}
				g(\nabla \phi(\bar x-\bar y))
			\bigr)
		\\
		{}\overrel*{\eqref{eq:subgradient_g}}{} &
			\phi(\bar x - \bar y) - g^*(-\bar y),
		\end{align*}
		where the last equality uses the fact that $g \in \Gamma_0(\bR^n)$.
		By combining the above equality with \eqref{eq:fsupconv} we obtain via the definition of $\bar y=\bar x-\nabla\phi^*(\bar v)$ that
		\[
			f(x)
		{}\geq{}
			f(\bar x)
			{}+{}
			\phi(x- \bar y)
			{}-{}
			\phi(\bar x - \bar y)
		{}={}
			f(\bar x)
			{}+{}
			\phi(x-\bar x + \nabla\phi^*(\bar v))
			{}-{}
			\phi(\nabla\phi^*(\bar v))
		\quad
			\forall x\in\R^n,
		\]
		which is the \a-strong convexity inequality \eqref{eq:a-strong}.
	\item``\ref{thm:anisoDuality:minmax} \(\Rightarrow\) \ref{thm:anisoDuality:B*-smooth}''
		Since $f$ is $\Phi$-convex, owing to \cref{thm:Phicvx}, $f=\biconj f \defeq (-f^\Phi) \supconv \phi$. Thus, both $f$ and $h:=f^\Phi$ are pointwise suprema over convex, lsc functions and thus convex lsc. Since $f$ is proper, $h(y)=\sup_{x \in \bR^n} \phi(x - y) - f(x) > -\infty$ for all $y \in \bR^n$. Suppose that $h\equiv +\infty$. This means that for any $x\in \bR^n$ we have $f(x)=\biconj f(x) = \conj h(x)=\sup_{y \in \bR^n}\phi(x - y) - h(y) = -\infty$ which contradicts properness of $f$. Thus $h$ must be proper as well.
		For any $\bar v \in \bR^n$ it holds that
		\begin{align*}
			\phi^*(\bar v)-(\conj f)_-^*(\bar v)
		{}\defeq{} &
			\inf_{y \in \bR^n} \innprod{y}{\bar v} + \conj f(y) + \sup_{\xi \in \bR^n} \innprod{\xi}{\bar v} -\phi(\xi)
		\\
		{}={} &
			\inf_{y \in \bR^n} \sup_{\xi \in \bR^n} \innprod{\xi+y}{\bar v} + \conj f(y) -\phi(\xi)
		\\
		{}\overrel*{\(x:=\xi+y\)}{} &
		\inf*_{y\in \bR^n}\sup_{x \in \bR^n}{
				\innprod{x}{\bar v}
				{}-{}
				\phi(x-y)
				{}+{}
				\conj f(y)
			}
		\\
		{}\overrel*{\eqref{eq:strong_duality_aniso_str_cvx}} &
			\sup_{x \in \bR^n}\inf*_{y\in \bR^n}{
				\innprod{x}{\bar v}
				{}-{}
				\phi(x-y)
				{}+{}
				\conj f(y)
			}
		\\
		{}\defeq{} &
			\sup_{x \in \bR^n}{
				\innprod{x}{\bar v}
				{}-{}
				\biconj f(x)
			}
		\\
		{}\defeq{} &
			(\biconj f)^*(\bar v)
		{}={}
			f^*(\bar v),
		\numberthis\label{eq:saddle_equiv}
		\end{align*}
		where the last equality owes to \cref{thm:Phicvx}. Therefore, and since $h \in \Gamma_0(\bR^n)$ we have \(\phi^*-f^*=(\conj f)_-^*=h_-^*\in\Gamma_0(\R^n)\).
		
	\item``\ref{thm:anisoDuality:B*-smooth} \(\Rightarrow\) \ref{thm:anisoDuality:minmax}''
		Let $g\coloneqq\phi^* - f^*\in\Gamma_0(\R^n)$. We have
		$$
			f=f^{**}=(\phi^* - g)^*=\sup_{y \in \bR^n} \phi(\cdot-y) - g_-^*(y)
		{}={}
			(-g_-^*)\supconv\phi,
		$$
		where the third equality follows from \cref{thm:hiriart_urruty}, and thus $f$ is $\Phi$-convex.
		By applying \cref{thm:hiriart_urruty} again to $g= \phi^* - f^*$ we also have
		\[
			g^*=(\phi^*-f^*)^* = \sup_{x \in \bR^n} \phi(\cdot - x) - f(-x) =\sup_{x \in \bR^n} \phi(x- (-\cdot)) - f(x) \defeq (\conj f)_-.
		\]
		Since $g\in\Gamma_0(\R^n)$, by taking conjugates on both sides we obtain that $g=(\conj f)_-^*$.
		In addition it holds that
		$$
			\biconj f\defeq\sup_{y \in \bR^n} \phi(\cdot-y) - \conj f(y)=(\phi^* - (\conj f)_-^*)^*,
		$$
		where the second identity again uses \cref{thm:hiriart_urruty}.
		By taking conjugates on both sides, since $\phi^* - (\conj f)_-^*=\phi^* - g=f^*$ is convex, proper, and lsc, we obtain that $(\biconj f)^*=\phi^* - (\conj f)_-^*$.
		As derived in \eqref{eq:saddle_equiv}, this is precisely the sought saddle point identity \eqref{eq:strong_duality_aniso_str_cvx}.

	\item Suppose now that $f$ is $\Phi$-convex and essentially smooth. Let $\bar x \in \dom \OLDpartial f$. Owing to essential smoothness of $f$, in view of \cite[Theorem 26.1]{Roc70} we have $\bar x \in \interior(\dom f)=\dom \OLDpartial f$ and $f$ is differentiable at $\bar x$. In particular it is strictly continuous at $\bar x$.
		Invoking \cref{thm:phi_subdiff_nonempty} we know that 
		$$
		\emptyset \neq \subdiff f(\bar x) \subseteq \bar x - \nabla \phi^*(\OLDpartial f(\bar x)),
		$$
		where the inclusion follows from \cref{thm:subset_phi_resolvent_inv}.
		Since $\OLDpartial f(\bar x)=\set{\nabla f(\bar x)}$, the right hand side of the inclusion is single-valued. Since the left-hand side is nonempty, the inclusion holds with equality.
		\Cref{thm:subset_phi_resolvent_inv} then implies that the anisotropic strong convexity inequality \eqref{eq:a-strong} holds at every $(\bar x, \bar v) \in \graph \OLDpartial f$ and thus $f$ is \a-strongly convex.
	\qedhere
		\end{itemize}
\end{proof}

\begin{remark}
Let \(\Phi(x,y)=\phi(x-y)\) and recall the definition of the (left) \(\Phi\)-conjugate
$
	\conj f(y)=\sup_{x\in\bR^n} \phi(x - y) - f(x)
$.
If \(\phi=\frac12\|{}\cdot{}\|^2\) is	 quadratic, by expanding the square we obtain $\conj f(y)=\frac{1}{2}\|y\|^2 +\sup_{x\in\bR^n} \langle x,-y\rangle +\frac12\|x\|^2 - f(x) =\frac{1}{2}\|y\|^2 + (f-\frac{1}{2}\|\cdot\|^2)^*_-(y)$ and hence $\conj f-\frac{1}{2}\|\cdot\|^2$ is convex. This shows that the saddle-point problem \eqref{eq:strong_duality_aniso_str_cvx} becomes convex-concave with a bilinear coupling:
	$$
		f^*(\bar v) = \sup_{x\in \bR^n} \inf*_{y \in \bR^n} \innprod{x}{\bar v} - \tfrac{1}{2}\|x\|^2 + \innprod{x}{y} + \conj f(y) -\tfrac{1}{2}\|y\|^2.
	$$
As a consequence, the saddle-point property in \cref{thm:anisoDuality:minmax} is superfluous in the Euclidean case.
\end{remark}
In \cref{sec:gap:a-strong} we will illustrate the sharpness of the saddle-point property in the non-Euclidean case.

\subsection{\B-strongly convex and \a*-smooth conjugate duality}
The main result of this section is \cref{thm:anisotropic_smoothness}, which, unlike the main result of the previous subsection, is derived via an existing conjugate duality between infimal convolution and pointwise addition \cite[Theorem 11.23(a)]{RoWe98}.
We first prove the following result which shows that at least for smooth functions $\Phi$-convexity for $\Phi(x,y)=-\phi(x-y)$ and \a-weak convexity are equivalent.
As will be shown in \cref{ex:counterex_weak_nd} this equivalence does not hold in general.
\begin{proposition} \label{thm:C1:a-weak_equivalence}
	Let $\Phi(x,y)=-\phi(x-y)$, and let $f\in\mathcal{C}^1(\R^n)$.
	Then the following conditions are equivalent:
	\begin{enumerateq}
	\item \label{thm:phiconvexC1:a-weak}%
		$f$ is \a-weakly convex;
	\item \label{thm:phiconvexC1:infconv}%
		$f$ is $\Phi$-convex.
	\end{enumerateq}
\end{proposition}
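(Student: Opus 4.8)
The plan is to obtain both implications from \cref{thm:subset_phi_resolvent_inv}, read with the coupling $\Phi(x,y)=-\phi(x-y)$, i.e.\ with $r=-1$. The simplification afforded by the hypothesis $f\in\mathcal C^1(\R^n)$ is that $\OLDpartial f(\bar x)=\widehat\OLDpartial f(\bar x)=\set{\nabla f(\bar x)}$ for every $\bar x$, so that both $\bar x-\nabla\phi^*(-\OLDpartial f(\bar x))$ and $\bar x-\nabla\phi^*(-\widehat\OLDpartial f(\bar x))$ collapse to the single point $\bar x-\nabla\phi^*(-\nabla f(\bar x))$; moreover $f$ is finite-valued and locally Lipschitz on $\R^n$, hence proper, lsc, and strictly continuous at every point. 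In particular, \cref{thm:subset_phi_resolvent_inv} always gives the inclusion $\subdiff f(\bar x)\subseteq\set{\bar x-\nabla\phi^*(-\nabla f(\bar x))}$.

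For ``\ref{thm:phiconvexC1:a-weak} $\Rightarrow$ \ref{thm:phiconvexC1:infconv}'' I would argue as follows. If $f$ is \a-weakly convex, then the \a-weak convexity inequality \eqref{eq:a-weak} holds at every $\bar x$ for $\bar v=\nabla f(\bar x)$, so the equivalence \ref{thm:phi_resolvent_inv:aniso_ineq}$\,\Leftrightarrow\,$\ref{thm:phi_resolvent_inv:subdiff} in \cref{thm:subset_phi_resolvent_inv} yields $\subdiff f(\bar x)=\set{\bar x-\nabla\phi^*(-\nabla f(\bar x))}$; in particular $\subdiff f(\bar x)\neq\emptyset$ for every $\bar x\in\R^n$. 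Since $f$ is proper, picking any $\bar y\in\subdiff f(\bar x)$ and applying \cref{thm:phi_subgrad_existence} gives $f(\bar x)=\biconj f(\bar x)$, and as $\bar x$ was arbitrary we conclude $f=\biconj f$, whence $f$ is $\Phi$-convex by \cref{thm:Phicvx}.

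For the converse ``\ref{thm:phiconvexC1:infconv} $\Rightarrow$ \ref{thm:phiconvexC1:a-weak}'', observe that since $f$ is $\Phi$-convex and strictly continuous (and finite) at every $\bar x$, \cref{thm:phi_subdiff_nonempty} guarantees $\subdiff f(\bar x)\neq\emptyset$ for all $\bar x$. Combined with the inclusion $\subdiff f(\bar x)\subseteq\set{\bar x-\nabla\phi^*(-\nabla f(\bar x))}$ recorded above, this forces $\subdiff f(\bar x)=\set{\bar x-\nabla\phi^*(-\nabla f(\bar x))}$ for every $\bar x$; invoking once more the equivalence \ref{thm:phi_resolvent_inv:subdiff}$\,\Leftrightarrow\,$\ref{thm:phi_resolvent_inv:aniso_ineq} in \cref{thm:subset_phi_resolvent_inv}, the inequality \eqref{eq:a-weak} then holds at every $(\bar x,\nabla f(\bar x))\in\graph\OLDpartial f$, and since $f$ is also proper and lsc it is \a-weakly convex by \cref{defin:a-}. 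The only step requiring a modicum of care is checking the hypotheses of \cref{thm:phi_subdiff_nonempty}---namely that a $\mathcal C^1$ function on $\R^n$ is strictly continuous at each point---which is precisely what pins the a priori possibly empty $\Phi$-subdifferential down to the single point dictated by \cref{thm:subset_phi_resolvent_inv}; beyond this, no obstacle is anticipated.
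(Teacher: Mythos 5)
Your proof is correct and follows essentially the same route as the paper's: the forward direction combines the nonemptiness of $\subdiff f$ guaranteed by \cref{thm:subset_phi_resolvent_inv} with \cref{thm:phi_subgrad_existence,thm:Phicvx}, and the converse uses \cref{thm:phi_subdiff_nonempty} (valid since a $\mathcal C^1$ function is strictly continuous everywhere) to force equality in the inclusion $\subdiff f(\bar x)\subseteq\set{\bar x-\nabla\phi^*(-\nabla f(\bar x))}$ and then reads off the \a-weak convexity inequality from \cref{thm:subset_phi_resolvent_inv}. No gaps.
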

\begin{proof}~
	\begin{proofitemize}
	\item``\ref{thm:phiconvexC1:a-weak} \(\Rightarrow\) \ref{thm:phiconvexC1:infconv}''
		The \a-weak convexity bound \eqref{eq:a-weak} in combination with \cref{thm:subset_phi_resolvent_inv} implies that $\subdiff f(\bar x)\neq\emptyset\) for all \(\bar x\in\R^n\).
		It then follows from \cref{thm:phi_subgrad_existence} that $\biconj f\equiv f$ on \(\R^n\), and as such \(f\) is \(\Phi\)-convex by \cref{thm:Phicvx}.
	\item``\ref{thm:phiconvexC1:infconv} \(\Rightarrow\) \ref{thm:phiconvexC1:a-weak}''
		Let $\bar x \in \bR^n$.
		Invoking \cref{thm:phi_subdiff_nonempty} for $r=-1$, we have%
		$$
			\emptyset
		{}\neq{}
			\subdiff f(\bar x)
		{}\subseteq{}
			\bar x-\nabla\phi^*(-\OLDpartial f(\bar x))
		{}={}
			\{\bar x-\nabla\phi^*(-\nabla f(\bar x)) \},
		$$
		where the inclusion follows from \cref{thm:subset_phi_resolvent_inv}.
		Since the right-hand side is single valued, the inclusion holds with equality.
		\Cref{thm:subset_phi_resolvent_inv} then implies that $f$ satisfies the \a-weak convexity bound \eqref{eq:a-weak} at $\bar x$ for all $\bar v \in \OLDpartial f(\bar x)$.
	\qedhere
	\end{proofitemize}
\end{proof}

Further restricting to convex $f$ we can eventually deduce the conjugate duality between \a-smooth and \B*-strongly convex functions invoking an existing conjugate duality between infimal convolution and pointwise addition \cite[Theorem 11.23(a)]{RoWe98}.
\begin{theorem}\label{thm:anisotropic_smoothness}%
	Let \(\Phi(x,y)=-\phi(x-y)\).
	For any $f \in \Gamma_0(\bR^n)$, the following conditions are equivalent:
	\begin{enumerateq}
	\item\label{thm:a-smooth:a-smooth}%
		\(f\) is \a-smooth;
	\item\label{thm:a-smooth:phi_convex}%
		$-f$ is $\Phi$-convex;
	\item\label{thm:a-smooth:B-str}%
		$f^*$ is \B*-strongly convex;
	\item\label{thm:a-smooth:infconv}%
		$f=g\infconv\phi$ for some $g\in \Gamma_0(\bR^n)$;
	\item\label{thm:a-smooth:a_weakly}%
		\(-f\) is \a-weakly convex.
	\end{enumerateq}
\end{theorem}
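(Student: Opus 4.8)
The plan is to funnel all five conditions through the infimal-convolution representation \ref{thm:a-smooth:infconv}: generalized conjugacy links it to $\Phi$-convexity of $-f$, ordinary Fenchel conjugacy links it to \B*-strong convexity of $f^*$, and \cref{defin:a-} together with \cref{thm:C1:a-weak_equivalence} attaches the anisotropic smoothness and weak-convexity statements \ref{thm:a-smooth:a-smooth} and \ref{thm:a-smooth:a_weakly}. Concretely I would establish the single cycle \ref{thm:a-smooth:a-smooth}$\Rightarrow$\ref{thm:a-smooth:a_weakly}$\Rightarrow$\ref{thm:a-smooth:phi_convex}$\Rightarrow$\ref{thm:a-smooth:B-str}$\Rightarrow$\ref{thm:a-smooth:infconv}$\Rightarrow$\ref{thm:a-smooth:a-smooth}, the first arrow being immediate from \cref{defin:a-} (\a-smoothness of $f$ includes \a-weak convexity of $-f$), together with the easy side remark that \ref{thm:a-smooth:infconv} implies \ref{thm:a-smooth:phi_convex}: if $f=g\infconv\phi$ with $g\in\Gamma_0(\R^n)$, then $\conj*g(x)=\sup_{y}-\phi(x-y)-g(y)=-(g\infconv\phi)(x)=-f(x)$ by \cref{rem:supconv}, so $-f=\conj*g$ is $\Phi$-convex by \cref{thm:Phicvx}.

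The step \ref{thm:a-smooth:a_weakly}$\Rightarrow$\ref{thm:a-smooth:phi_convex} is, I expect, the most delicate one, and it hinges on an observation that is easy to miss: by \cref{defin:a-}, \a-weak convexity of $-f$ already presupposes that $-f$ is proper; since $f\in\Gamma_0(\R^n)$ this forces $\dom f=\R^n$, so that $f$ is finite and convex on all of $\R^n$, hence locally Lipschitz, and $\OLDpartial(-f)(\bar x)\neq\emptyset$ for every $\bar x\in\R^n$. With $\Phi(x,y)=-\phi(x-y)$, inequality \eqref{eq:a-weak} written for $-f$ then holds at every $\bar x$ and all $\bar v\in\OLDpartial(-f)(\bar x)$, so applying \cref{thm:subset_phi_resolvent_inv} to $-f$ with $r=-1$ yields $\subdiff(-f)(\bar x)=\bar x-\nabla\phi^*(-\OLDpartial(-f)(\bar x))\neq\emptyset$ for all $\bar x$. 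Picking any $\bar y\in\subdiff(-f)(\bar x)$ at each point and invoking \cref{thm:phi_subgrad_existence} gives $\biconj{(-f)}\equiv-f$, hence $-f$ is $\Phi$-convex by \cref{thm:Phicvx}.

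For \ref{thm:a-smooth:phi_convex}$\Rightarrow$\ref{thm:a-smooth:B-str}$\Rightarrow$\ref{thm:a-smooth:infconv} I would pass to Fenchel conjugates. If $-f$ is $\Phi$-convex then by \cref{thm:Phicvx} one may write $-f=\conj*h$ with $h\coloneqq\conj{(-f)}$; by \cref{rem:supconv} this reads $f=h\infconv\phi$, so $f^*=h^*+\phi^*$ and therefore $f^*-\phi^*=h^*$ is convex and lsc, and — since $\phi^*$ is finite-valued while $f^*$ is proper — also proper, i.e.\ $f^*\in\Gamma_0(\R^n)$ is \B*-strongly convex. Conversely, from \ref{thm:a-smooth:B-str} set $g\coloneqq(f^*-\phi^*)^*\in\Gamma_0(\R^n)$; then $g^*=f^*-\phi^*$, so $f^*=g^*+\phi^*$, and conjugating via \cite[Theorem 11.23(a)]{RoWe98} — whose qualification $0\in\interior(\dom\phi^*-\dom g^*)$ holds trivially because $\dom\phi^*=\R^n$ — gives $f=f^{**}=(g^*+\phi^*)^*=g\infconv\phi$, which is \ref{thm:a-smooth:infconv}; the reverse implication \ref{thm:a-smooth:infconv}$\Rightarrow$\ref{thm:a-smooth:B-str} is also immediate, via $f^*=(g\infconv\phi)^*=g^*+\phi^*$.

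It remains to prove \ref{thm:a-smooth:infconv}$\Rightarrow$\ref{thm:a-smooth:a-smooth}. Writing $f=g\infconv\phi$ with $g\in\Gamma_0(\R^n)$, super-coercivity of $\phi$ makes $y\mapsto g(y)+\phi(x-y)$ coercive for each fixed $x$, so $\dom f=\R^n$, $f\in\Gamma_0(\R^n)$, and the infimum is attained at some $y_x$; since $\phi$ is differentiable everywhere, the subgradient rule for exact infimal convolutions forces $\OLDpartial f(x)$ into the singleton $\set{\nabla\phi(x-y_x)}$, which — being nonempty because $f$ is finite and convex — equals it, whence $f\in\mathcal{C}^1(\R^n)$. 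Then $f\in\Gamma_0(\R^n)$ is \a-weakly convex by \cref{thm:lem_astrongconvexity_impl_convexity}, while $-f\in\mathcal{C}^1(\R^n)$ is $\Phi$-convex (by the side remark of the first paragraph) and hence \a-weakly convex by \cref{thm:C1:a-weak_equivalence}; by \cref{defin:a-}, $f$ is therefore \a-smooth. The two places where I expect to have to be careful are exactly these regularity extractions: pulling $\dom f=\R^n$ and the full-domain-ness of $\OLDpartial(-f)$ out of the bare properness requirement hidden in \ref{thm:a-smooth:a_weakly}, and proving the $\mathcal{C}^1$-regularity of the anisotropic infimal convolution $g\infconv\phi$.
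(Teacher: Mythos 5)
Your proof is correct and takes essentially the same route as the paper's: everything is funneled through the infimal-convolution representation \ref{thm:a-smooth:infconv}, with \cref{thm:C1:a-weak_equivalence} and \cref{thm:lem_astrongconvexity_impl_convexity} supplying the step back to \a-smoothness, and with \ref{thm:a-smooth:a_weakly}\(\Rightarrow\)\ref{thm:a-smooth:phi_convex} handled exactly as in the paper (finiteness of \(f\) gives \(\dom\OLDpartial(-f)=\R^n\), then \cref{thm:subset_phi_resolvent_inv} and \cref{thm:phi_subgrad_existence} yield \(\biconj{(-f)}=-f\)). The only cosmetic deviations are that you arrange the implications as a single cycle, bypass \cref{thm:hiriart_urruty} by conjugating \(f=h\infconv\phi\) directly, and prove the \(\mathcal C^1\) regularity of \(g\infconv\phi\) by hand where the paper cites \cite[Corollary 18.8]{BaCo110}.
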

\begin{proof}~
	\begin{itemize}
	\item``\ref{thm:a-smooth:infconv} \(\Rightarrow\) \ref{thm:a-smooth:B-str}''
		Assume that $f=g \infconv\phi$ for some $g\in \Gamma_0(\bR^n)$. By taking the convex conjugate we obtain via \cite[Theorem 11.23(a)]{RoWe98} that $f^*=(g\infconv \phi)^*=g^*+ \phi^*$, showing that $f^*-\phi^*=g^*$ is convex.
	\item``\ref{thm:a-smooth:B-str} \(\Rightarrow\) \ref{thm:a-smooth:infconv}''
		Suppose that $h\coloneqq f^*- \phi^*$ is convex. Since $f^*$ is proper lsc and $\phi^*$ is smooth we deduce that $h$ is also proper and lsc. Then, since $f$ is proper, convex and lsc and $f^*=h+\phi^*$, we have that $f=f^{**}=(h+\phi^*)^*$, and since $\phi^*$ has full domain, in view of \cite[Theorem 11.23(a)]{RoWe98} we obtain that $(h+\phi^*)^*=h^* \infconv \phi$. The claim follows by considering $g=h^* \in \Gamma_0(\bR^n)$.
	\item``\ref{thm:a-smooth:a-smooth} \(\Rightarrow\) \ref{thm:a-smooth:phi_convex}''
		Let $f$ be \a-smooth. In particular this means that $-f$ is \a-weakly convex and $f \in \mathcal{C}^1(\bR^n)$. Invoking \cref{thm:C1:a-weak_equivalence} this implies that $-f$ is $\Phi$-convex.
	\item``\ref{thm:a-smooth:phi_convex} \(\Rightarrow\) \ref{thm:a-smooth:infconv}''
		In view of \cref{thm:Phicvx} $\Phi$-convexity of $-f$ is equivalent to the identity $\biconj{(-f)}=-f$. Specialized to $\Phi(x,y)=-\phi(x-y)$ we have that
	\begin{align}
	f=-\biconj{(-f)}&\defeq-\sup_{y \in \bR^n} -\phi(\cdot-y) -\conj{(-f)}(y) \notag \\
	&=\inf_{y \in \bR^n} \phi(\cdot-y) +\conj{(-f)}(y)\defeq \conj{(-f)} \infconv \phi \label{eq:inf_conv_biconj}.
	\end{align}
	It remains to show that $\conj{(-f)} \in \Gamma_0(\bR^n)$. Thanks to \cref{thm:hiriart_urruty} since $f \in \Gamma_0(\bR^n)$ we have
	\begin{align*}
	(-f)^\Phi(y) = \sup_{x \in \bR^n} -\phi(x-y) + f(x) = \sup_{\xi \in \bR^n} f(y-\xi) - \phi_-(\xi) = (f^*-\phi^*)^*(y),
	\end{align*}
	and thus, in particular, $(-f)^\Phi$ is proper, convex, lsc.
	\item``\ref{thm:a-smooth:infconv} \(\Rightarrow\) \ref{thm:a-smooth:phi_convex}'' Follows by \cref{thm:Phicvx} using the identity \cref{eq:inf_conv_biconj}.
	\item``\ref{thm:a-smooth:infconv} \(\Rightarrow\) \ref{thm:a-smooth:a-smooth}''
		Let $f=g \infconv \phi$ for some $g \in \Gamma_0(\bR^n)$. As shown above, this implies that $-f$ is $\Phi$-convex. Since $\phi \in \mathcal{C}^1(\bR^n)$ is super-coercive, \cite[Corollary 18.8]{BaCo110} implies that $f\in\mathcal C^1(\R^n)$. Thanks to \cref{thm:C1:a-weak_equivalence} we deduce that $-f$ is \a-weakly convex. Since $f \in \Gamma_0(\bR^n)$, in view of \cref{thm:lem_astrongconvexity_impl_convexity}, $f$ is \a-weakly convex too. Overall this means that $f$ is \a-smooth. 
	\item ``\ref{thm:a-smooth:a-smooth} \(\Rightarrow\) \ref{thm:a-smooth:a_weakly}''
	Follows by definition (cf. \cref{def:a-smooth})  
	\item ``\ref{thm:a-smooth:a_weakly} \(\Rightarrow\) \ref{thm:a-smooth:phi_convex}'' Assume that $-f$ is \a-weakly convex. This implies that $-f$ is proper by definition. Since $f$ is proper convex, this means that $f$ is finite-valued, and thus strictly continuous in view of \cite[Example 9.14]{RoWe98}. This implies that also $-f$ is strictly continuous and thus, in view of \cite[Theorem 9.13]{RoWe98} and \cite[Corollary 8.10]{RoWe98}, $\dom \OLDpartial(-f) = \bR^n$. \A-weak convexity of $-f$ thus implies via \cref{thm:subset_phi_resolvent_inv} that $\dom \subdiff (-f) = \bR^n$. In view of \cref{thm:phi_subgrad_existence} this means that $-f = \biconj{(-f)}$ and hence $\Phi$-convex by \cref{thm:Phicvx}.
	\qedhere
	\end{itemize}
\end{proof}
The equivalence between \cref{thm:a-smooth:B-str,thm:a-smooth:infconv} was previously shown in \cite[Lemma 4.2]{wang2021bregman} for more general Legendre functions $\phi$ that need not be super-coercive.

\subsection{Examples and \a-smooth calculus} \label{sec:examples}
	We conclude this section by providing examples of \a-smooth and \a-strongly convex functions and their calculus. 
	By exploiting the conjugate duality between \B-smooth and \a*-strongly convex functions from \cref{thm:duality_aniso_str_cvx}, we provide an example for an \a*-strongly convex function on $\bR^2$.
		\begin{example} \label{ex:a_strongly_cvx}
			Let $A=[1~-1] \in \bR^{1 \times 2}$ and $b=5$.
			Consider $\func{f}{\bR^2}{\bR}$ defined by $f(x)=\frac14|Ax-b|^4$.
			In view of \cite[Proposition 2.1]{lu2018relatively}, $f$ is \B-smooth with reference function
			$$
				\phi(x)=\tfrac L4 \|x\|_2^4 + \tfrac L 2\|x\|_2^2
			$$
			for $L=3\|A\|^4+6\|A\|^3 |b| + 3\|A\|^2|b| = 42 + 60\sqrt 2$. Invoking \cite[Theorem 11.23(b)]{RoWe98} we have
			\begin{align*}
				f^*(y) &= \inf\{ (3/4)|x|^{4/3} + bx : A^\top x = y\} =
				\begin{ifcases}
					5 y_1 +\frac34 |y_1|^{4/3} & y_1 + y_2=0 \\
					\infty\otherwise.
				\end{ifcases}
			\end{align*}
			In light of \cref{thm:duality_aniso_str_cvx}, $f^*$ is \a*-strongly convex, where
			$$
				\phi^*(y)=\tfrac{3}{4L^{1/3}}  \|y\|_{2}^{4/3} \infconv \tfrac{1}{2L}\|y\|_2^2.
			$$
		\end{example}
	Next we provide an example for a function which is \a-smooth but, due to the lack of shift-invariance in Bregman distances, not \B-smooth. This complements \cref{ex:counter_duality_bregman}.

	\begin{example}[\a-smoothness versus \B-smoothness]\label{ex:shift_vs_tilt}%
		Let $\phi=\tfrac23|{}\cdot{}|^{\nicefrac32}$, and let $\func{f}{\bR}{\bR}$ be defined as
		\(
			f
		{}={}
			\indicator_{[-1,1]} \infconv \phi
		\),
		namely
		$$
			f(x)
		{}={}
			\begin{ifcases}
				0 & x \in [-1,1] \\
				(2/3)|x + 1|^{3/2} & x < -1 \\
				(2/3)|x - 1|^{3/2} & x > 1.
			\end{ifcases}
		$$
		By \cref{thm:anisotropic_smoothness}, $f$ is \a-smooth.
		The convex conjugate $f^*=|{}\cdot{}| + (1/3)|{}\cdot{}|^{3}$ is \B*-strongly convex, where $\phi^*=\frac13|{}\cdot{}|^3$.
		We prove that $f$ is not {\def\kernel{L\phi}\B}-smooth for any $L>0$. Note that $f''(x)= 1/\sqrt{4(x - 1)}$ for $x > 1$ while $L\phi''(x) = L/\sqrt{4 x}$ for $x > 0$.
		For $x \searrow 1$ we have that $L\phi''(x) \searrow L/2$ while $f''(x) \to \infty$.
		This contradicts convexity of $L\phi-f$ which would require $L\phi''-f'' \geq 0$ on $(1, \infty)$.

		Moreover, $f$ is not {\def\kernel{L\phi({}\cdot{}-b)}\B}-smooth for any $L>0$ and $b \in \bR^n$, since $f$ has an unbounded second-order derivative at the points $x\in \{\pm1\}$, i.e., $f''(x) \to \infty$ for $x\nearrow -1$ resp. $x \searrow +1$.
	\end{example}

	By invoking \cref{thm:anisotropic_smoothness} we can provide a simple rule of calculus for \a-smooth functions.
For $a>0$ and some function $\func{f}{\bR^n}{\exR}$ the epi-scaling $a \star f$ is defined by $(a \star f)(x):=a f(a^{-1} x)$.
	\begin{corollary}[epi-calculus for \a-smoothness]%
		Let $f_1,f_2 \in \Gamma_0(\bR^n)$ be \a-smooth relative to $\phi_1$ and $\phi_2$ respectively, and let $a_1, a_2>0$. Assume that one function is lower bounded and the other is coercive, or that one of the two functions is super-coercive.
		Then, $a_1 \star f_1\infconv a_2 \star f_2$ is \a-smooth with reference function $\phi:=a_1 \star \phi_1 \infconv a_2 \star \phi_2$.
	\end{corollary}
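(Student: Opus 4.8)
The plan is to reduce the statement to the characterization in \cref{thm:anisotropic_smoothness}, according to which a function \(f\in\Gamma_0(\R^n)\) is \a-smooth with respect to a reference function \(\phi\) if and only if \(f^*-\phi^*\in\Gamma_0(\R^n)\). Write \(h\coloneqq a_1\star f_1\infconv a_2\star f_2\) and \(\phi\coloneqq a_1\star\phi_1\infconv a_2\star\phi_2\). Then three facts suffice: (i) \(\phi\) complies with \cref{assum:a1,assum:a2,assum:a3}, so that it is a legitimate reference function; (ii) \(h\in\Gamma_0(\R^n)\); (iii) \(h^*-\phi^*\in\Gamma_0(\R^n)\). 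The computational engine is the pair of elementary conjugacy identities \((a\star g)^*=a\,g^*\) (for \(a>0\)) and \((g_1\infconv g_2)^*=g_1^*+g_2^*\) (see \cite[Theorem 11.23(a)]{RoWe98}), which together yield \(h^*=a_1f_1^*+a_2f_2^*\) and \(\phi^*=a_1\phi_1^*+a_2\phi_2^*\), whence
\[
	h^*-\phi^*=a_1(f_1^*-\phi_1^*)+a_2(f_2^*-\phi_2^*).
\]

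For (iii): each \(f_i^*-\phi_i^*\) lies in \(\Gamma_0(\R^n)\) by \a-smoothness of \(f_i\) (\cref{thm:anisotropic_smoothness}), so the displayed right-hand side is a sum of two proper lsc convex functions; since \(\phi_i^*\) is finite-valued (the conjugate of the super-coercive \(\phi_i\); see \cite[Proposition 2.16]{bauschke1997legendre}), the sum is well defined and is again proper, lsc and convex as soon as \(\dom f_1^*\cap\dom f_2^*\neq\emptyset\). This is exactly where the coercivity hypothesis enters: a coercive function is in particular bounded below, so in the first alternative both \(f_1,f_2\) are bounded below and \(0\in\dom f_1^*\cap\dom f_2^*\), while in the second alternative, say with \(f_1\) super-coercive, one has \(\dom f_1^*=\R^n\) by \cite[Proposition 2.16]{bauschke1997legendre} and the intersection equals \(\dom f_2^*\neq\emptyset\). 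Either way, (iii) holds.

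For (i): the standing assumptions are self-dual, so \(\phi_1^*,\phi_2^*\) are again Legendre, full-domain and super-coercive (cf.\ \cite[Proposition 2.16]{bauschke1997legendre} and \cite[Theorem 26.5]{Roc70}); these three properties survive multiplication by a positive scalar and finite summation of such functions, so \(\phi^*=a_1\phi_1^*+a_2\phi_2^*\) is Legendre, full-domain and super-coercive. Hence so is \(\phi=(\phi^*)^*\), once we know \(\phi\in\Gamma_0(\R^n)\) so that \(\phi=\phi^{**}\). The membership \(\phi\in\Gamma_0(\R^n)\) follows because \(\phi\) is majorized by \((a_1\star\phi_1)(\cdot)+(a_2\star\phi_2)(0)<\infty\) and, for each \(x\), the map \(y\mapsto(a_1\star\phi_1)(x-y)+(a_2\star\phi_2)(y)\) is coercive in \(y\) (both summands being super-coercive), so the infimum defining \(\phi(x)\) is attained and finite; thus \(\phi\) is a real-valued convex function, hence continuous, hence in \(\Gamma_0(\R^n)\).

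For (ii): the same finiteness/coercivity argument applies with \(f_1,f_2\) in place of \(\phi_1,\phi_2\). These are finite-valued convex (being \a-smooth, hence \(\mathcal C^1\)), so \(h\le(a_1\star f_1)(\cdot)+(a_2\star f_2)(0)<\infty\) everywhere; and under either alternative of the hypothesis --- transported to the epi-scaled functions, which preserve coercivity, super-coercivity and lower boundedness --- the map \(y\mapsto(a_1\star f_1)(x-y)+(a_2\star f_2)(y)\) is coercive in \(y\) (one summand super-coercive and the other affinely minorized, or one coercive and the other bounded below), so its infimum is attained and \(>-\infty\); hence \(h\) is real-valued and convex, thus continuous and in \(\Gamma_0(\R^n)\). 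With (i)--(iii) in hand, \cref{thm:anisotropic_smoothness} applied with the reference function \(\phi\) yields that \(h\) is \a-smooth, which is the claim. I expect the only genuinely delicate point to be (i) --- checking that the candidate kernel \(\phi\) is honestly Legendre --- and the reason the argument goes through is that passing to \(\phi^*=a_1\phi_1^*+a_2\phi_2^*\) turns it into a transparent statement about finite sums of scaled Legendre full-domain functions; the rest is bookkeeping with the two conjugacy identities and with how coercivity migrates under epi-scaling and infimal convolution.
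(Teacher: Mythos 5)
Your proof is correct and takes essentially the same route as the paper's: both pass to conjugates via \((a\star f)^*=af^*\) and \((g_1\infconv g_2)^*=g_1^*+g_2^*\) and then invoke \cref{thm:anisotropic_smoothness}; your use of the characterization ``\(h^*-\phi^*\) convex'' coincides with the paper's use of the inf-convolution characterization, since \(g_i^*=f_i^*-\phi_i^*\) in its notation. The only difference is bookkeeping: where you verify \(h\in\Gamma_0(\bR^n)\) (and the needed domain intersection) by a direct coercivity/attainment argument, the paper outsources exactly this step to \cite[Proposition 12.14]{BaCo110}, which is where the lower-boundedness/coercivity hypothesis enters in both arguments.
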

	\begin{proof}
		In view of \cref{thm:anisotropic_smoothness} there exist $g_i\in\Gamma_0(\bR^n)$ such that $f_i=g_i\infconv\phi_i$, $i=1,2$. By \cite[Proposition 12.14]{BaCo110} we have that $a_1 \star f_1\infconv a_2 \star f_2 \in \Gamma_0(\bR^n)$ and thus
		\begin{align*}
		a_1 \star f_1\infconv a_2 \star f_2 &=(a_1 \star f_1\infconv a_2 \star f_2)^{**} =(a_1 f_1^* + a_2 f_2^*)^* \\
		&=( a_1 g_1^*+ a_2 g_2^* + a_1 \phi_1^* + a_2 \phi_2^*)^*.
		\end{align*}
		Since $\phi_i$ is Legendre, super-coercive and has full domain, this also holds for $a_1 \phi_1^* + a_2 \phi_2^*$ which is thus compliant with \cref{assum:a1,assum:a2,assum:a3}, and by \cite[Theorem 11.23(a)]{RoWe98} $(a_1 \phi_1^* + a_2 \phi_2^*)^*=a_1 \star \phi_1 \infconv a_2 \star \phi_2=\phi$.
		Therefore, $a_1 g_1^*+ a_2 g_2^* + a_1 \phi_1^* + a_2 \phi_2^*$ is \B*-strongly convex. Invoking \cref{thm:anisotropic_smoothness} $( a_1 g_1^*+ a_2 g_2^* + a_1 \phi_1^* + a_2 \phi_2^*)^* = a_1 \star f_1\infconv a_2 \star f_2$ is \a-smooth as claimed.
	\end{proof}

	In contrast to \B-smoothness, the sum of \a-smooth functions may fail to be \a-smooth. In fact, \a-smoothness is not even preserved by addition of a linear function.
	In combination with \cref{ex:counter_duality_bregman} this illustrates that in general shift- and tilt-invariance are mutually exclusive properties of \a- and \B-smoothness respectively.
	\begin{example}[lack of tilt-invariance for \a-smooth functions]%
		Let $f=\phi=\frac13|{}\cdot{}|^3$ and $g(x)=5x$.
		Clearly, both $f$ and $g$ are \a-smooth, since $g^*=\indicator_{\set 5}$ is \B*-strongly convex, where $\phi^*=\frac23|{}\cdot{}|^{\nicefrac 32}$.
		In view of \cite[Theorem 11.23(a)]{RoWe98} we have
		$$
			f+g=(f+g)^{**}=(f^* \infconv g^*)^*,
		$$
		where
		$f^* \infconv g^* = \tfrac23|{}\cdot{}|^{3/2} \infconv \indicator_{\set 5} = \tfrac23|{}\cdot{} - 5|^{3/2}$.
		For $x \searrow 0$ we have $(f^* \infconv g^*)''(x) \to 1/\sqrt{20}$, while $(\phi^*)''(x)= 1/\sqrt{4 x} \to \infty$. Thus $f^* \infconv g^*$ is not \B*-strongly convex.
		From \cref{thm:anisotropic_smoothness} we infer that $f+g$ is not \a-smooth.
	\end{example}
	If the conjugate reference function $\phi^*$ generates a jointly convex Bregman distance $\D*$, \a-smoothness is closed under pointwise average as recently shown in the context of the Bregman proximal average \cite[Theorem 5.1(ii)]{wang2021bregman}. Unfortunately, however, joint convexity of $\D*$ under \cref{assum:a3}, i.e., full domain of $\phi^*$, implies that $\phi^*$ is quadratic \cite[Remark 3.6]{bauschke2001joint}. The more general case has been recently explored in \cite{laude2022anisotropic} along with a practical algorithm and more practical examples complementing the ones provided in \cite{maddison2021dual}.

	\section{Anisotropic and generalized convexity gap}\label{sec:gap}%
		\subsection{Univariate pointwise maxima and minima}
In this section we investigate whether $\Phi$-convexity implies anisotropic convexity, i.e., whether $\Phi$-convex functions for $\Phi(x,y)=r\phi(x-y)$ with $r \in \set{\pm1}$ satisfy the anisotropic subgradient inequality \eqref{eq:a-ineq}.
We have already seen in \cref{thm:C1:a-weak_equivalence} (resp. \cref{thm:duality_aniso_str_cvx}) that $\Phi$-convexity is equivalent to \a-weak convexity (resp. \a-strong convexity) under (essential) smoothness of $f$. Instead, in this section our focus is on nonsmooth $\Phi$-envelopes. To this end we confine ourselves to $\Phi$-convex functions where the index set $\mathcal{I}$ as in \cref{defin:Phicvx} is finite. This, in general, leads to functions which have downwards pointing cusps at points at which multiple pieces intersect and the limiting subdifferential is multivalued.
We first show that at least in the univariate case such functions satisfy the anisotropic subgradient inequality.
	To this end we first show that this holds whenever the index set $\mathcal{I}$ is a doubleton. The situation is depicted in \cref{fig:lower_bounds_failure} in the upper row.
	\begin{lemma} \label{thm:subgradient_pointwise_max}%
		Let $n=1$.
		Let $r =+1$ (resp. $r=-1$) and choose $\Phi(x,y)=r\phi(x-y)$.
		Let $f:\bR \to \bR$ with $f(x)=\max_{i \in \{1,2\}} \Phi(x,y_i) -\beta_i$ for some $y_i,\beta_i \in \bR$ with $i \in \{1,2\}$. Then $f$ satisfies the \a-strong convexity inequality \eqref{eq:a-strong} (resp. the \a-weak convexity inequality \eqref{eq:a-weak}) at any $(\bar x, \bar v) \in \graph \OLDpartial f$.
	\end{lemma}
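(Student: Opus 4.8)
The plan is to fix a pair $(\bar x,\bar v)\in\graph\OLDpartial f$ and reduce the anisotropic inequality at that pair to a single global majorization. Writing $q_i\coloneqq r\phi(\cdot-y_i)-\beta_i$, so that $f=\max\{q_1,q_2\}$ with both $q_i\in\mathcal C^1(\bR)$ (as $\phi$ is Legendre with full domain), and setting $\bar y\coloneqq\bar x-\nabla\phi^*(r\bar v)$, the unified inequality \eqref{eq:a-ineq} at $(\bar x,\bar v)$ is, after the substitution carried out in the proof of \cref{thm:subset_phi_resolvent_inv}, precisely the statement $\bar y\in\subdiff f(\bar x)$; equivalently, with the elementary function $q\coloneqq r\phi(\cdot-\bar y)+f(\bar x)-r\phi(\bar x-\bar y)$, which agrees with $f$ at $\bar x$, the claim is that $q\le f$ on all of $\bR$. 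So everything reduces to this bound.

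The key preliminary observation is a sign property. Since $\phi$ is strictly convex on $\bR$, the maps $\nabla\phi$ and $\nabla\phi^*$ are strictly increasing; hence for each $i$ the derivative of $q_i-q$, namely $x\mapsto r\bigl(\nabla\phi(x-y_i)-\nabla\phi(x-\bar y)\bigr)$, has a \emph{constant} sign on $\bR$, determined by $r$ and by the order of $\bar y$ and $y_i$. Unwinding $\bar y=\bar x-\nabla\phi^*(r\bar v)$ — separately for $r=+1$ and $r=-1$, using that $\bar v\ge s_i$ (respectively $\bar v\le s_i$) is equivalent, after applying $\nabla\phi^*$, to a comparison of $\bar y$ with $y_i$, where $s_i\coloneqq q_i'(\bar x)=r\nabla\phi(\bar x-y_i)$ — one checks in every case that $\bar v\ge s_i$ makes $q_i-q$ non-increasing on $\bR$ while $\bar v\le s_i$ makes it non-decreasing; moreover $(q_i-q)(\bar x)=q_i(\bar x)-f(\bar x)\le0$, with equality exactly when $q_i$ is active at $\bar x$.

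It then remains to locate $\bar v$ relative to $s_1,s_2$ and to identify the active pieces. If only $q_j$ is active at $\bar x$, then $f\equiv q_j$ near $\bar x$, so $\OLDpartial f(\bar x)=\{s_j\}$, forcing $\bar v=s_j$, hence $\bar y=y_j$ and $q=q_j\le f$. Otherwise $q_1(\bar x)=q_2(\bar x)=f(\bar x)$; relabel so $s_1\le s_2$. The case $s_1=s_2$ gives $y_1=y_2$ by injectivity of $\nabla\phi$, hence $q_1=q_2$ and we conclude as above. If $s_1<s_2$, then $q_1-q_2\in\mathcal C^1$ vanishes at $\bar x$ with negative derivative there, so $f=q_1$ on a left half-neighbourhood of $\bar x$ and $f=q_2$ on a right one; reading off one-sided derivatives gives $\widehat\OLDpartial f(\bar x)=[s_1,s_2]$, and since the regular subgradients of $f$ at the nearby differentiable points converge to $s_1$ or $s_2$, also $\OLDpartial f(\bar x)\subseteq[s_1,s_2]$, so $\bar v\in[s_1,s_2]$. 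By the sign property, $q_1-q$ is non-increasing and $q_2-q$ is non-decreasing, and both vanish at $\bar x$; therefore $q\le q_1\le f$ on $(-\infty,\bar x]$ and $q\le q_2\le f$ on $[\bar x,\infty)$, which yields $q\le f$ on $\bR$ and completes the proof. I expect the genuinely one-dimensional part — and the only real obstacle — to be this final step: the majorization is obtained by a left/right split of $\bR$, using a \emph{different} elementary lower bound on each half, and there is no substitute for such a decomposition in higher dimensions (in keeping with the restriction to $n=1$ in the statement); the case bookkeeping and the $r=\pm1$ sign checks are routine but must be done with care.
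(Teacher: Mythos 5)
Your proof is correct. It rests on the same one-dimensional mechanism as the paper's proof --- namely that for strictly convex $\phi$ on $\R$ the difference $x\mapsto r\phi(x-y_i)-r\phi(x-\bar y)$ of two shifted copies is monotone, with direction determined by the order of $y_i$ and $\bar y$ --- but you exploit it in a genuinely different way. The paper argues by contradiction: assuming the bound fails at some $\hat x$, it uses the monotonicity of exactly this difference (there written as $q_{\bar y}(\,\cdot\,+(\bar y-y_i))-q_{\bar y}$, which is your $q_i-q$ up to a constant) to propagate the violation to every point between $\bar x$ and $\hat x$, and then contradicts the first-order expansions at $\bar x$ of $r\phi(\cdot-\bar y)$ and of the active piece whose slope exceeds $\bar v$. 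You instead determine the \emph{sign} of the monotonicity from the location of $\bar v$ in $[s_1,s_2]$ and conclude directly that $q\le q_1\le f$ on $(-\infty,\bar x]$ and $q\le q_2\le f$ on $[\bar x,\infty)$. Your route is arguably cleaner: it avoids the little-$o$ bookkeeping of the contradiction step, and it makes explicit why the argument is confined to $n=1$ (the half-line split with a different comparison piece on each side has no analogue in higher dimensions, consistent with \cref{ex:counter_rel_str_cvx,ex:counterex_weak_nd}). The price is slightly more case analysis up front --- identifying the active pieces and establishing $\OLDpartial f(\bar x)=[s_1,s_2]$ by hand, which the paper obtains in one line from \cite[Exercise 8.31]{RoWe98}; you could shorten your proof by citing that result as well. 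The $r=\pm1$ sign checks you defer to the reader do work out in both cases.
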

	\begin{proof}
			Let $\bar x \in \bR$ be fixed and define $h_i \coloneqq  r\phi(\cdot - y_i) - \beta_i$ for some $y_i,\beta_i \in \bR$.
			Fix $\bar v \in \OLDpartial f(\bar x)=\con\set{h_i'(\bar x)}[h_i(\bar x) = f(\bar x),~i \in \{1,2\}]$, cf. \cite[Exercise 8.31]{RoWe98}.
			If \(\bar v=h_j'(\bar x)\) for some \(j\in\set{1,2}\) with $r\phi(\bar x - y_j) -\beta_j\defeq h_j(\bar x)=f(\bar x)$, the anisotropic subgradient inequality \eqref{eq:a-ineq} holds at $(\bar x, y_j)$ having
			$$
				f(x)\defeq\max\set{r\phi(x - y_i) - \beta_i}[i \in \{1,2\}] \geq f(\bar x) + r\phi(x - y_j) - r\phi(\bar x - y_j),
			$$
			in this case. Therefore, we may assume that $h_1(\bar x) = h_2(\bar x)=f(\bar x)$ and $\bar v = \tau h_1'(\bar x) + (1-\tau)h_2'(\bar x)$ for some $\tau\in(0,1)$.
			Let $\bar y\coloneqq\bar x - (\phi^*)'(r\bar v)$.
			To arrive at a contradiction, suppose that there exists $\hat x$ at which the anisotropic subgradient inequality \eqref{eq:a-ineq} is violated:
			$$
				r\phi(\hat x - y_i) - \beta_i < f(\bar x) + r\phi(\hat x-\bar y) - r\phi(\bar x -\bar y),
			\quad
				i \in \{1,2\}.
			$$
			Since $\beta_i = r\phi(\bar x - y_i) - f(\bar x)$, the above inequality can be equivalently rewritten as
			$$
				q_{\bar y}(\hat x + (\bar y- y_i)) - q_{\bar y}(\hat x) <  q_{\bar y}(\bar x+ (\bar y- y_i)) - q_{\bar y}(\bar x),
			\quad
				i =1,2,
			$$
			where $q_{\bar y}\coloneqq r\phi({}\cdot{}- \bar y)$.
			Without loss of generality assume that $r=1$.
			Since $q_{\bar y} \in \mathcal{C}^1(\bR^n)$ is strictly convex (strictly concave if $r=-1$) and thus $(q_{\bar y}'(x+(\bar y- y_i))-q_{\bar y}'(x))(\bar y- y_i) > 0$, the function $x \mapsto q_{\bar y}(x+(\bar y- y_i))-q_{\bar y}(x)$ is strictly monotone (either increasing or decreasing depending on the sign of $\bar y- y_i$).
			The above inequality then implies that
			$$
				q_{\bar y}(x + (\bar y- y_i)) - q_{\bar y}(x) <  q_{\bar y}(\bar x+ (\bar y- y_i)) - q_{\bar y}(\bar x),
			\quad
				i \in \{1,2\},
			$$
			holds for all $x$ between $\hat x$ and $\bar x$.
			This means that $f(x) < r\phi(x-\bar y) - r\phi(\bar x - \bar y) + f(\bar x)$,
			for all such $x$, and with equality holding for $x=\bar x$.
			Without loss of generality, assume that $\hat x > \bar x$.
			Let $j\in\set{1,2}$ be such that $h_j'(\bar x) > \bar v$ (which exists since $\bar v$ lies strictly between $h_1'(\bar x)$ and $h_2'(\bar x)$).
			Since $\bar v = r\phi'(\bar x - \bar y)$, by linearizing $r\phi(\cdot-\bar y)$ at $\bar x$ one obtains
			\begin{align*}
				f(x)
			{}<{} &
				r\phi(x-\bar y) - r\phi(\bar x - \bar y) + f(\bar x)
			{}={}
				\bar v(x-\bar x) + o(|x-\bar x|) + f(\bar x)
			\shortintertext{%
				for all \(x\in(\bar x,\hat x)\).
				Similarly, by linearizing $r\phi(\cdot-y_j)$ at $\bar x$ we also have
			}
				f(x)
			{}\geq{} &
				r\phi(x-y_j) - r\phi(\bar x - y_j) + f(\bar x)
			{}={}
				h_j'(\bar x) (x-\bar x) + o(|x-\bar x|) + f(\bar x)
			\end{align*}
			for all \(x\in(\bar x,\hat x)\).
			By combining the two inequalities we arrive to
			$$
				h_j'(\bar x) (x-\bar x) + o(|x-\bar x|) <  \bar v (x-\bar x) + o(|x-\bar x|),
			$$
			whence the contradiction \(h_j'(\bar x)<\bar v\).
	\end{proof}

\begin{proposition} \label{thm:subgradient_1d}%
	Let $n=1$. Let $r=+1$ (resp. $r=-1$) and choose $\Phi(x,y)=r\phi(x-y)$.
	Let $f:\bR \to \bR$ with $f(x)=\max_{i \in \mathcal{I}} \Phi(x,y_i) -\beta_i$ for some $y_i,\beta_i \in \bR$ with $i \in \mathcal{I}$ finite.
	Then $f$ is \a-strongly convex (resp. \a-weakly convex).
\end{proposition}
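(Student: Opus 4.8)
The plan is to bootstrap the general finite-index case from the two-piece case already established in \cref{thm:subgradient_pointwise_max}. Write $h_i\coloneqq r\phi({}\cdot{}-y_i)-\beta_i$ for $i\in\mathcal I$, so that $f=\max_{i\in\mathcal I}h_i$ is finite-valued and continuous, hence proper and lsc, so that \cref{defin:a-} is applicable; what remains is to verify the anisotropic subgradient inequality \eqref{eq:a-ineq} (with $r=+1$, resp.\ $r=-1$) at every $(\bar x,\bar v)\in\graph\OLDpartial f$. The key observation is that in dimension one the limiting subdifferential of a finite max of smooth pieces is an interval, so any of its elements is already a convex combination of just two of the active slopes, which lets us discard all but two pieces of $f$ without losing the subgradient $\bar v$.

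Concretely, I would proceed as follows. Fix $(\bar x,\bar v)\in\graph\OLDpartial f$ and let $I(\bar x)\coloneqq\set{i\in\mathcal I}[h_i(\bar x)=f(\bar x)]$ be the active set. Since each $h_i\in\mathcal C^1(\R)$, \cite[Exercise 8.31]{RoWe98} gives $\OLDpartial f(\bar x)=\con\set{h_i'(\bar x)}[i\in I(\bar x)]$, which for $n=1$ is the closed interval with endpoints $\min_{i\in I(\bar x)}h_i'(\bar x)$ and $\max_{i\in I(\bar x)}h_i'(\bar x)$. I would then select $i_1\in\argmin_{i\in I(\bar x)}h_i'(\bar x)$ and $i_2\in\argmax_{i\in I(\bar x)}h_i'(\bar x)$ — allowing $i_1=i_2$ — so that $\bar v\in\con\set{h_{i_1}'(\bar x),h_{i_2}'(\bar x)}$, and introduce the two-piece auxiliary function $\tilde f\coloneqq\max\set{h_{i_1},h_{i_2}}$, which is exactly of the form covered by \cref{thm:subgradient_pointwise_max} for the same pairing $\Phi(x,y)=r\phi(x-y)$. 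Three elementary checks then close the argument: $\tilde f\leq f$ on $\R$ since $\set{i_1,i_2}\subseteq\mathcal I$; $\tilde f(\bar x)=f(\bar x)$ and both $h_{i_1},h_{i_2}$ are active for $\tilde f$ at $\bar x$ since $i_1,i_2\in I(\bar x)$, whence $\OLDpartial\tilde f(\bar x)=\con\set{h_{i_1}'(\bar x),h_{i_2}'(\bar x)}\ni\bar v$ by \cite[Exercise 8.31]{RoWe98} again, so $(\bar x,\bar v)\in\graph\OLDpartial\tilde f$. \Cref{thm:subgradient_pointwise_max} then yields
\[
	\tilde f(x)\geq \tilde f(\bar x)+r\phi\bigl(x-\bar x+\nabla\phi^*(r\bar v)\bigr)-r\phi\bigl(\nabla\phi^*(r\bar v)\bigr)\qquad\forall x\in\R,
\]
and combining with $f\geq\tilde f$ and $f(\bar x)=\tilde f(\bar x)$ upgrades this verbatim to \eqref{eq:a-ineq} for $f$ at $(\bar x,\bar v)$. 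Since $(\bar x,\bar v)\in\graph\OLDpartial f$ was arbitrary, \cref{defin:a-} together with \cref{rem:a_smoothness_euclidean} gives that $f$ is \a-strongly convex for $r=+1$ (resp.\ \a-weakly convex for $r=-1$).

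The step I expect to be the crux — and the reason the statement is confined to $n=1$ — is the passage from the full index set $\mathcal I$ to a doubleton while retaining $\bar v$ as a genuine limiting subgradient of the reduced max $\tilde f$. This works precisely because $\OLDpartial f(\bar x)$ is one-dimensional: its two endpoints are realized by active pieces, so every intermediate value, and in particular $\bar v$, is a convex combination of those two slopes. In higher dimension $\con\set{h_i'(\bar x)}[i\in I(\bar x)]$ is typically a polytope that cannot be recovered from only two of its vertices, so this reduction — and with it the proof — breaks down. Everything else is routine bookkeeping: that majorization with equality at $\bar x$ transfers the anisotropic lower bound, and that the subdifferential of a finite max of $\mathcal C^1$ functions is the convex hull of the active derivatives.
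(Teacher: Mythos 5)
Your proposal is correct and follows essentially the same route as the paper's proof: reduce to a doubleton of active pieces via \cite[Exercise 8.31]{RoWe98} and the one-dimensional interval structure of $\OLDpartial f(\bar x)$, apply \cref{thm:subgradient_pointwise_max} to the two-piece auxiliary max, and transfer the lower bound using majorization with equality at $\bar x$. The only cosmetic difference is that you pick the extremal active slopes via $\argmin$/$\argmax$, whereas the paper merely selects two active indices bracketing $\bar v$; the argument is otherwise identical.
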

\begin{proof}
		Let $\bar x \in \bR$. Since $\Phi(\cdot,y_i) -\beta_i \in \mathcal{C}^1(\bR^n)$ and $\mathcal{I}$ is finite, \cite[Exercise 8.31]{RoWe98} gives
		$$
			\OLDpartial f(\bar x) = \con \{r\phi'(\bar x - y_i) : r\phi(\bar x- y_i) - \beta_i = f(\bar x),~i \in \mathcal{I} \}.
		$$
		Let $\bar v \in \OLDpartial f(\bar x)$ be fixed.
		Then, there exist $i_1,i_2 \in \mathcal{I}$ (not necessarily distinct) with $r\phi(\bar x- y_{i_j}) - \beta_{i_j} = f(\bar x)$ and $j \in \{1,2\}$ such that $r\phi'(\bar x - y_{i_1})\leq \bar v\leq r\phi'(\bar x - y_{i_2})$.
		Define \(h\coloneqq\max\set*{ r\phi({}\cdot{} - y_{i_j}) - \beta_{i_j}}[j \in \{1,2\}]\). In view of \cite[Exercise 8.31]{RoWe98} $\bar v \in \OLDpartial h(\bar x)=\con \{r\phi'(\bar x - y_{i_j}) : r\phi(\bar x- y_{i_j}) - \beta_{i_j} = h(\bar x),~j \in \{1,2\} \}$. \cref{thm:subgradient_pointwise_max} applied to $h$ then yields
$$
	h(x)\geq h(\bar x) + r\phi(x-\bar x + (\phi^*)'(r\bar v)) - r\phi((\phi^*)'(r\bar v)) \quad \forall x \in \bR^n.
$$	
		By using the fact that $h(\bar x) = f(\bar x)$ we have for $\bar y\coloneqq\bar x - (\phi^*)'(r\bar v)$

		\[
				f(x) \defeq \max_{i \in \mathcal{I}} \Phi(x,y_i) -\beta_i \geq h(x) \geq f(\bar x) + r\phi(x - \bar y) - r\phi(\bar x - \bar y)
			\quad \forall x \in \bR.
		\]
		Since $(\bar x, \bar v) \in \graph \OLDpartial f$ was arbitrary the claim follows.
\end{proof}

\subsection{Anisotropic strong convexity}\label{sec:gap:a-strong}~
\begin{figure}[t!]
	\includetikz[width=0.49\linewidth]{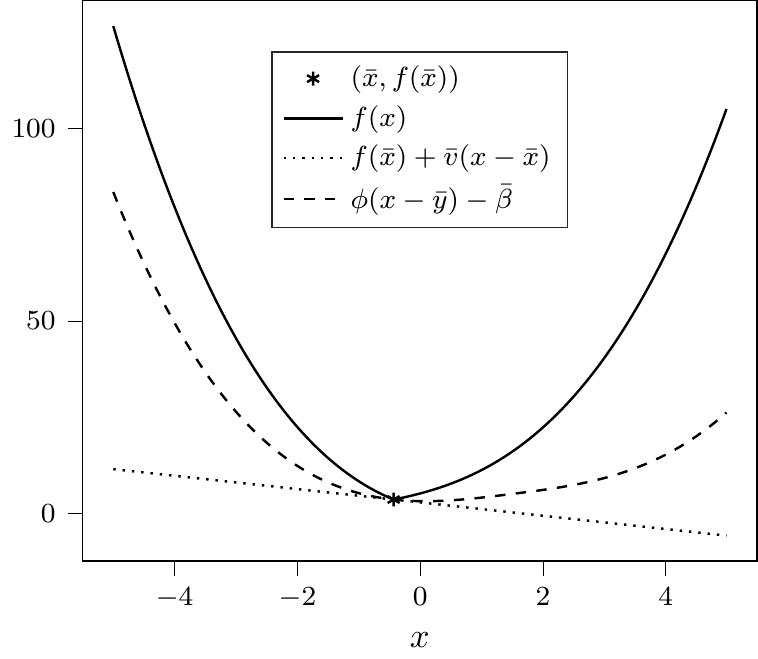}%
	\hfill
	\includetikz[width=0.49\linewidth]{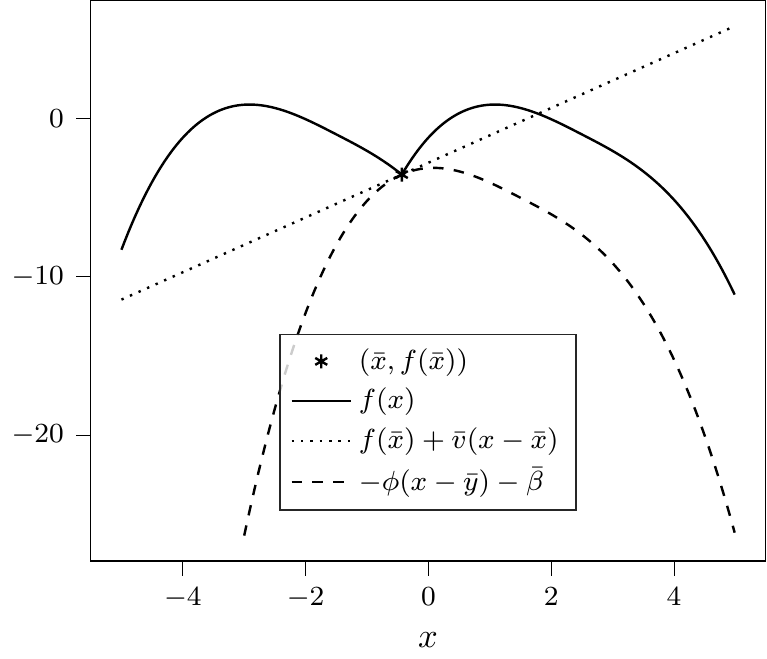}%

	\includetikz[width=0.49\linewidth]{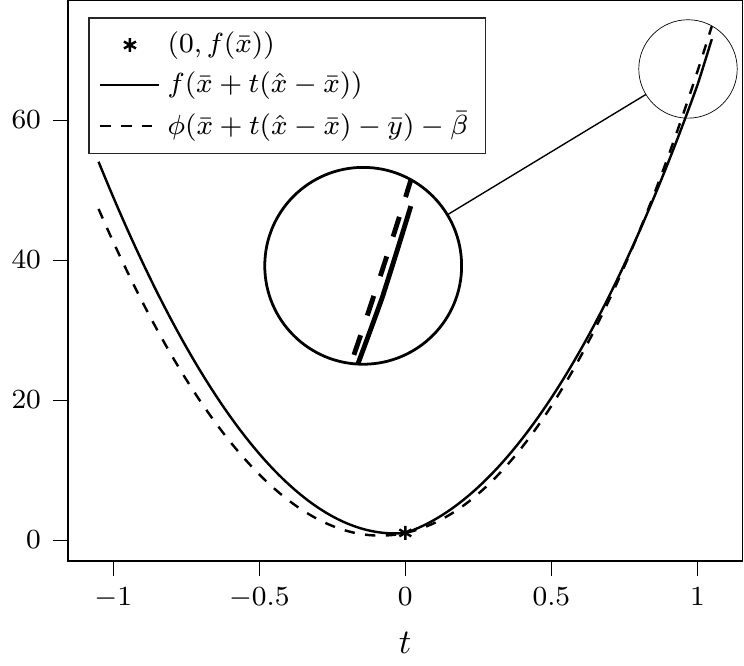}%
	\hfill
	\includetikz[width=0.49\linewidth]{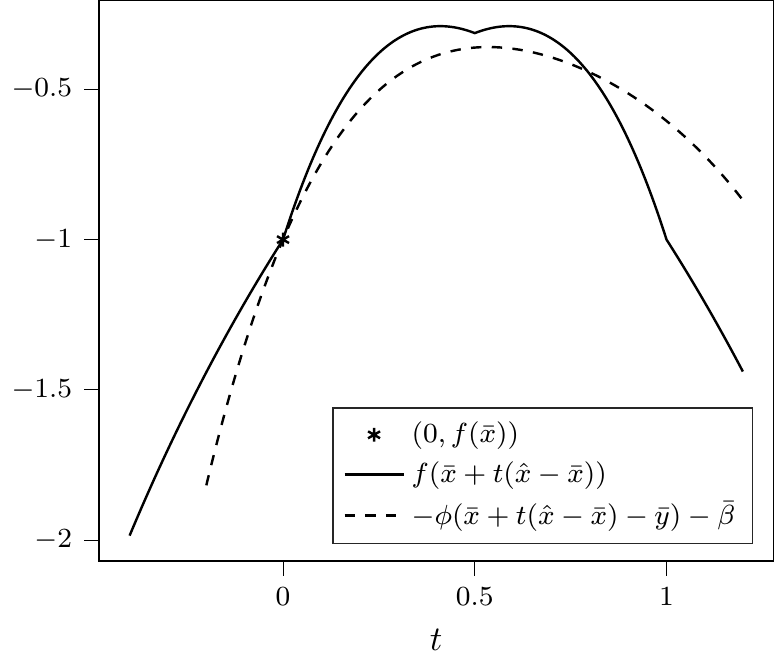}%
	\caption{Upper row: Illustration of \a-strong convexity (left) and \a-weak convexity (right) of univariate pointwise maxima as in \cref{thm:subgradient_pointwise_max}. Lower row: failure of \a-strong convexity (left) and \a-weak convexity (right) of multivariate pointwise maxima plotted along the line $t \mapsto \bar x + t(\hat x - \bar x)$ as described in \cref{ex:counter_rel_str_cvx,ex:counterex_weak_nd}. The dashed lines correspond to the lower approximations $x \mapsto r\phi(x - \bar y) - \bar \beta$ with $\bar y = \bar x - \protect\nabla \phi^*(r\bar v)$ and $\bar \beta = r\phi(\protect\nabla \phi^*(r\bar v)) - f(\bar x)$ as in the anisotropic subgradient inequality \eqref{eq:a-ineq} where $\bar v \in \protect\OLDpartial f(\bar x)$ is an element of the multivalued subdifferential of $f$ at $\bar x$, a point where the graph of $f$ has a cusp. While in the upper row these approximations are global lower bounds, in the lower row the dashed curves are not below the solid ones.
	}%
	\label{fig:lower_bounds_failure}%
\end{figure}
In the univariate case, in light of \cref{thm:subgradient_1d} for $r=+1$ $\Phi$-convexity with finite $\mathcal{I}$ implies \a-strong convexity. The situation is depicted in \cref{fig:lower_bounds_failure} in the upper left.
Since finite-valued $\Phi$-convex functions for $r=+1$ are in particular convex and thus strictly continuous, we can derive the following refinement of \cref{thm:subgradient_1d} for $\Phi$-convex functions with $\mathcal{I}$ not necessarily being finite.
\begin{proposition}[\a-strong convexity of supremal convolutions on \(\R\)]%
\label{thm:astrongly_convex_1d}
	Let $\func{f}{\bR}{\bR}$ be \(\Phi\)-convex for \(\Phi(x,y)=\phi(x-y)\) (equivalently, $f=(-g)\supconv\phi$ for some $g\in\Gamma_0(\R)$).
	Then, $f$ is \a-strongly convex.
\end{proposition}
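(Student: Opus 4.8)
The plan is to reduce the general supremal convolution to the two-piece case of \cref{thm:subgradient_pointwise_max}, exploiting that on \(\R\) the limiting subdifferential of a convex function is a compact interval whose endpoints are the one-sided derivatives. Since \(f\) is \(\Phi\)-convex for \(\Phi(x,y)=\phi(x-y)\), \cref{thm:Phicvx} and \cref{rem:supconv} give \(f=\biconj f\), i.e.\ \(f(x)=\sup_{y\in\R}\bigl(\phi(x-y)-g(y)\bigr)\) with \(g\coloneqq\conj f\); here \(g\in\Gamma_0(\R)\), being a supremum of the continuous convex functions \(y\mapsto\phi(x-y)-f(x)\) and proper (otherwise \(\biconj f\equiv-\infty\), contradicting properness of \(f\)). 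Consequently \(f\) is real-valued, convex and lsc, and---dominating \(\phi({}\cdot{}-y_0)-g(y_0)\) for a fixed \(y_0\in\dom g\)---super-coercive, hence locally Lipschitz and in particular strictly continuous. By \cref{thm:phi_subdiff_nonempty}, \(\subdiff f(\bar x)\) is nonempty and compact for every \(\bar x\in\R\), and by \cref{thm:phi_subgrad_existence} it equals \(\argmax_{y\in\R}\bigl(\phi(\bar x-y)-g(y)\bigr)\), the set of ``active shifts'' at \(\bar x\).

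\textbf{Key step.} First I would fix \(\bar x\in\R\) and set \(A(\bar x)\coloneqq\set{\nabla\phi(\bar x-y)}[y\in\subdiff f(\bar x)]\), which is a compact subset of \(\OLDpartial f(\bar x)=[f'_-(\bar x),f'_+(\bar x)]\) by \cref{thm:subset_phi_resolvent_inv} (compact since \(\subdiff f(\bar x)\) is compact and \(\nabla\phi\) is continuous). The crux is the identity \(\min A(\bar x)=f'_-(\bar x)\) and \(\max A(\bar x)=f'_+(\bar x)\). The inequalities \(\min A(\bar x)\geq f'_-(\bar x)\) and \(\max A(\bar x)\leq f'_+(\bar x)\) follow by differentiating \(f\geq\phi({}\cdot{}-y)-g(y)\) at active \(y\); for the converse I would argue by contradiction. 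Assuming, say, \(f'_+(\bar x)>\max A(\bar x)\), pick \(x_j\downarrow\bar x\) and near-optimal shifts \(y_j\) for \(f(x_j)\). Convexity of \(\phi\) forces \(\liminf_j\nabla\phi(x_j-y_j)\geq f'_+(\bar x)\), while \((y_j)\) must remain bounded: were \(y_j\to-\infty\) along a subnet, super-coercivity of \(\phi\) combined with \(\phi(\bar x-y_j)-g(y_j)\leq f(\bar x)\) would give \(\phi(x'-y_j)-g(y_j)\to\infty\) for any fixed \(x'>\bar x\), contradicting finiteness of \(f\); and \(y_j\to+\infty\) is incompatible with \(\liminf_j\nabla\phi(x_j-y_j)\geq f'_+(\bar x)\). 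A limit point \(y_\infty\) of \((y_j)\) is then active---by lower semicontinuity of \(g\) and continuity of \(\phi\) and \(f\)---so \(\nabla\phi(\bar x-y_\infty)\in A(\bar x)\); but \(\nabla\phi(\bar x-y_\infty)\geq f'_+(\bar x)>\max A(\bar x)\), a contradiction. The symmetric argument handles \(\min A(\bar x)=f'_-(\bar x)\).

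\textbf{Conclusion.} With the identity established, given \((\bar x,\bar v)\in\graph\OLDpartial f\) pick \(y_1,y_2\in\subdiff f(\bar x)\) attaining \(\min A(\bar x)\) and \(\max A(\bar x)\), so that \(\nabla\phi(\bar x-y_1)\leq\bar v\leq\nabla\phi(\bar x-y_2)\), and set \(h\coloneqq\max\set{\phi({}\cdot{}-y_i)-g(y_i)}[i\in\set{1,2}]\). Both pieces attain the value \(f(\bar x)=h(\bar x)\), so \cite[Exercise 8.31]{RoWe98} gives \(\bar v\in\con\set{\nabla\phi(\bar x-y_1),\nabla\phi(\bar x-y_2)}=\OLDpartial h(\bar x)\); \cref{thm:subgradient_pointwise_max} with \(r=+1\) then yields the \a-strong convexity inequality \eqref{eq:a-strong} for \(h\) at \((\bar x,\bar v)\), and since \(f\geq h\) with \(f(\bar x)=h(\bar x)\), the same inequality holds for \(f\). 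As \((\bar x,\bar v)\in\graph\OLDpartial f\) was arbitrary, \(f\) is \a-strongly convex. I expect the boundedness step in the key identity---ruling out near-optimal shifts escaping to infinity---to be the only delicate point, and it is precisely there that the finiteness of \(f\) on all of \(\R\) enters; the remainder is a direct transcription of \cref{thm:subgradient_pointwise_max}. (An alternative route is to approximate \(f\) from below by \cref{thm:subgradient_1d}-type finite maxima \(f_Y\), each \a-strongly convex hence with \(\phi^*-f_Y^*\in\Gamma_0(\R)\) by \cref{thm:duality_aniso_str_cvx}, and to pass to the limit after checking \(f_Y^*\to f^*\) pointwise via a compactness argument using super-coercivity of \(f\).)
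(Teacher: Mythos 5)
Your proof is correct and follows the same skeleton as the paper's: both arguments sandwich an arbitrary $\bar v\in\OLDpartial f(\bar x)$ between the gradients $\nabla\phi(\bar x-y_1)\leq\bar v\leq\nabla\phi(\bar x-y_2)$ of two \emph{active} shifts $y_1,y_2\in\subdiff f(\bar x)$, and then invoke the two-piece \cref{thm:subgradient_pointwise_max} on $h\coloneqq\max_{i}\,\phi(\cdot-y_i)-\conj f(y_i)$ together with $f\geq h$ and $f(\bar x)=h(\bar x)$. The only genuine divergence is how the inclusion $\OLDpartial f(\bar x)\subseteq\con\{\nabla\phi(\bar x-y):y\in\subdiff f(\bar x)\}$ is justified: the paper restricts the supremum to a compact set of shifts near $\bar x$, recognizes $f$ as lower-$\mathcal C^1$, and cites \cite[Theorem 10.31]{RoWe98}, whereas you prove the sharper univariate statement that the endpoints $f'_-(\bar x)$ and $f'_+(\bar x)$ of $\OLDpartial f(\bar x)$ are themselves attained in $A(\bar x)=\{\nabla\phi(\bar x-y):y\in\subdiff f(\bar x)\}$, via monotonicity of the convex subdifferential along $x_j\searrow\bar x$ and a compactness argument on (near-)optimal shifts. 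Your route is more elementary and self-contained, at the price of being intrinsically one-dimensional; the paper's is shorter given the lower-$\mathcal C^1$ machinery. One small repair is needed in your boundedness step: to rule out $y_j\to-\infty$ you invoke the upper bound $\phi(\bar x-y_j)-\conj f(y_j)\leq f(\bar x)$, which by itself cannot force $\phi(x'-y_j)-\conj f(y_j)\to\infty$; what you actually need is the lower bound $-\conj f(y_j)\geq f(x_j)-\varepsilon_j-\phi(x_j-y_j)$ coming from (near-)optimality of $y_j$ at $x_j$ (exact attainment being available by \cref{thm:phi_subdiff_nonempty}), after which convexity and super-coercivity of $\phi$ give $\phi(x'-y_j)-\phi(x_j-y_j)\geq\nabla\phi(x_j-y_j)(x'-x_j)\to\infty$ and hence the desired contradiction with finiteness of $f(x')$. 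With that adjustment the argument is complete.
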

\begin{proof}
	Since $f$ is finite-valued, in view of \cite[Example 9.14]{RoWe98} $f$ is strictly continuous on $\bR$.
	By \cref{thm:Phicvx} we have for any $x \in \bR$
		\begin{equation}\label{eq:lowerC1}
			f(x) = \biconj f(x) = \sup_{y\in\R}\phi(x - y) - \conj f(y),
		\end{equation}
		and by \cref{thm:phi_subgrad_existence} it holds that $y \in \subdiff f(x) \Leftrightarrow x \in \subdiff \conj f(y) \Leftrightarrow y \in \argmax \{\phi(x-\cdot) - \conj f\}$. In light of \cref{thm:subset_phi_resolvent_inv,thm:phi_subdiff_nonempty} $\emptyset \neq \subdiff f(x)\subseteq x - (\phi^*)'(\OLDpartial f(x))$
		and hence the supremum in \cref{eq:lowerC1} is attained on $\subdiff f(x)\subseteq x - (\phi^*)'(\OLDpartial f(x))$. 
		Let $\bar x \in \bR$. Because of strict continuity, $\OLDpartial f$ is locally bounded at $\bar x$, and as such so is $\subdiff f$.
		Thus there exists a compact set $Y$ so that for any $x$ sufficiently near $\bar x$ we can restrict the maximization over $y$ in \eqref{eq:lowerC1} to $Y$, i.e., $f(x)=\sup_{y\in Y}\phi(x - y) - \conj f(y)$ for $x$ near $\bar x$. This implies that $f$ is lower-$\mathcal{C}^1$ in the sense of \cite[Definition 10.29]{RoWe98}. In light of \cite[Theorem 10.31]{RoWe98} we have that
		$$
\OLDpartial f(\bar x) \subseteq \con \{\phi'(\bar x - y) : y \in \subdiff f(\bar x) \}.
$$
Let $\bar v \in \OLDpartial f(\bar x)$ be fixed. Then there exist $y_1, y_2 \in \subdiff f(\bar x)=\argmax \{\phi(\bar x-\cdot) - \conj f\}$ (not necessarily distinct) such that $\phi'(\bar x - y_1)\leq \bar v\leq \phi'(\bar x - y_2)$. In particular we have $\phi(\bar x- y_{i}) - \conj f(y_i) = f(\bar x)$ for $i \in \{1,2\}$. Define \(h\coloneqq\max\set*{ \phi({}\cdot{} - y_i) - \conj f(y_i)}[i \in \{1,2\}]\). In view of \cite[Exercise 8.31]{RoWe98} $\bar v \in \OLDpartial h(\bar x)=\con \{\phi'(\bar x - y_{i}) : \phi(\bar x- y_{i}) - \conj f(y_i) = h(\bar x),~i \in \{1,2\} \}$. \cref{thm:subgradient_pointwise_max} applied to $h$ then yields
$$
	h(x)\geq h(\bar x) + \phi(x-\bar x + (\phi^*)'(\bar v)) - \phi((\phi^*)'(\bar v)) \quad \forall x \in \bR^n.
$$		
		By using the fact that $\phi(\bar x- y_{i}) - \conj f(y_i) = f(\bar x)$ for $i \in \{1,2\}$ and thus $h(\bar x) = f(\bar x)$ we have for $\bar y\coloneqq\bar x - (\phi^*)'(\bar v)$
		\[
				f(x) = \biconj f(x) = \sup_{y\in\R}\phi(x - y) - \conj f(y) \geq h(x) \geq f(\bar x) + \phi(x - \bar y) - \phi(\bar x - \bar y)
			\quad \forall x \in \bR.
		\]
		Since $(\bar x, \bar v) \in \graph \OLDpartial f$ was arbitrary the claim follows.
\end{proof}
In light of \cref{thm:duality_aniso_str_cvx} this shows that the saddle-point property \eqref{eq:strong_duality_aniso_str_cvx} holds automatically in the finite-valued univariate case.

In the multivariate case, however, the situation is different: $\Phi$-convex functions on $\bR^n$ in general fail to be \a-strongly convex.
This is verified by the following counterexample

\begin{example}[failure of \a-strong convexity in higher dimensions]\label{ex:counter_rel_str_cvx}%
	Choose $\func{\phi}{\bR^2}{\bR}$ as $\phi(x)=x_1^2 + x_2^4$, and for $y=(-1,1)$ let
	$$
		f(x)\coloneqq\max\set{\phi(x), \phi(x-y)}.
	$$
	The graphs of the two pieces $\phi, \phi(\cdot - y)$ intersect at points $x \in \bR^2$ at which $\phi(x)=\phi(x-y)$, i.e., which satisfy $x_1 =2x_2^3 -3x_2^2 + 2x_2 -1$. At these points the graph of $f$ has a downwards pointing cusp. 
	We have $\nabla \phi(x)=(2x_1, 4x_2^3)$ and $\nabla \phi(x-y)=(2x_1 + 2, 4(x_2 - 1)^3)$.
	Take $\bar x=(-1,0)$, at which $\phi(\bar x)=\phi(\bar x-y)$ holds in particular, and
	$$\textstyle
		\bar v
	{}\coloneqq{}
		\binom{-1}{-2}\in\OLDpartial f(\bar x)
	{}={}
		\con\set{\binom{-2}{0},~ \binom{0}{-4}},
	$$
	so that $\nabla \phi(\bar x - \bar y) = \bar v$ and $\phi(\bar x - \bar y) - \bar \beta = f(\bar x)$
	for
	\(
		\bar y = \binom{-\nicefrac12}{2^{-1/3}}
	\)
	and
	\(
		\bar\beta = \phi(\bar x - \bar y) - f(\bar x) = \tfrac14 + 2^{-4/3} -1
	\).
	At $\hat x=(-8, -1)$ we have $f(\hat x) = 65$ and $\phi(\hat x - \bar y) -\bar \beta 
	> 65=f(\hat x)$, and thus $\phi(\cdot - \bar y) -\bar \beta$ is not a lower bound of $f$. As a consequence, $f$ is not \a-strongly convex. This is shown in \cref{fig:lower_bounds_failure} in the lower left: it can be seen that the dashed line, the graph of $\phi(\cdot - \bar y) -\bar \beta$, is above the solid line, the graph of $f$ when restricted to the line $\bar x + t(\hat x - \bar x)$ and $t$ is close to $1$.
\end{example}
In light of \cref{thm:duality_aniso_str_cvx} this shows that the saddle-point property \eqref{eq:strong_duality_aniso_str_cvx} does not hold in general.

\subsection{Anisotropic weak convexity}
In light of \cref{thm:subgradient_1d}, for $r=-1$, univariate pointwise maxima over finite collections of functions $\Phi(\cdot, y_i) -\beta_i$ are \a-weakly convex.

The following example reveals that this is no longer true in the multivariate case.

\begin{example}[failure of \a-weak convexity in higher dimensions] \label{ex:counterex_weak_nd}%
	Choose $\func{\phi}{\bR^2}{\bR}$ as $\phi(x)=x_1^2 + x_2^4$, and for $y=(1,-1)$ let
	$$
		f(x)
	{}\coloneqq{}
		\max\set{-\phi(x), -\phi(x-y)}
	{}={}
		\begin{ifcases}
			-\phi(x) & x_1 \leq 2x_2^3 + 3x_2^2 + 2x_2 + 1\\
			-\phi(x-y)\otherwise.
		\end{ifcases}
	$$
	The graphs of the two pieces $-\phi, -\phi(\cdot - y)$ intersect at points $x \in \bR^2$ at which $\phi(x)=\phi(x-y)$.
	Take $\bar x=(1,0)$, for which this holds in particular, and
	$$\textstyle
		\bar v
	{}\coloneqq{}
		\binom{-1}{-2}\in\OLDpartial f(\bar x)
	{}={}
		\con\set{\binom{-2}{0},~ \binom{0}{-4}},
	$$
	so that $-\nabla \phi(\bar x - \bar y) = \bar v$ and $-\phi(\bar x - \bar y) - \bar \beta = f(\bar x)$ for
	$$\textstyle
		\bar y = \binom{1/2}{-2^{-1/3}}
	\quad\text{and}\quad
		\bar\beta = \phi(\bar x - \bar y) - f(\bar x) = \frac{3-2^{2/3}}{4}.
	$$
	At $\hat x=(0, -1)$ we have $-\phi(\hat x - \bar y) -\bar \beta = 2^{1/3}(2^{4/3}-3) > -1=f(\hat x)$, and thus $-\phi(\cdot - \bar y) -\bar \beta$ is not a lower bound of $f$. As a consequence, $f$ is not \a-weakly convex.
		This is shown in \cref{fig:lower_bounds_failure} in the lower right: it can be seen that the dashed line, the graph of $-\phi(\cdot - \bar y) -\bar \beta$, is above the solid line, the graph of $f$ when restricted to the line $\bar x + t(\hat x - \bar x)$ and $t$ is close to $1$.
\end{example}

	\section{Conclusion}\label{sec:conclusion}%
		In this paper we provided the dual counterparts to relative smoothness and strong convexity (\B-smoothness and \B-strong convexity), called anisotropic smoothness and strong convexity (\a*-smoothness and \a*-strong convexity).
In the context of generalized convexity, both notions can be seen as $\Phi$-subgradient inequalities, or as anisotropic generalizations of the descent lemma and strong convexity inequality which can be fruitful in the development of first-order algorithms.
Unlike the Euclidean case, we have shown a gap between the classes of functions that satisfy the anisotropic strong convexity subgradient inequality and the class of supremal convolutions, the latter being a proper subset of the former
for which a certain saddle-point property holds. In spite of the duality relation being limited to convex functions, the viewpoint of $\Phi$-convexity naturally motivated the study of \emph{weak convexity} counterparts which led to the discovery of yet another unexpected gap. While \B-weak convexity can be characterized in terms of $\Phi$-convexity, there exist \a-weakly convex functions that are not $\Phi$-convex (with the corresponding $\Phi$); whether a converse inclusion holds is an open problem. It is also unclear whether differentiability follows from \a-weak convexity of both $f$ and $-f$, in the same way it does for the Bregman notions. In general, we believe that a more detailed investigation of anisotropic weak convexity is an interesting direction for future research. Algorithmic implications and more practical examples of the newly introduced anisotropic notions are under investigation in \cite{laude2022anisotropic}.

	\ifsiam
		\bibliographystyle{siamplain}
	\else
		\phantomsection
		\addcontentsline{toc}{section}{References}%
		\bibliographystyle{abbrv}
	\fi
	\bibliography{references.bib}
\end{document}